	\newcommand{\subalign}[1]{%
	  \vcenter{%
		\Let@ \restore@math@cr \default@tag
		\baselineskip\fontdimen10 \scriptfont\tw@
		\advance\baselineskip\fontdimen12 \scriptfont\tw@
		\lineskip\thr@@\fontdimen8 \scriptfont\thr@@
		\lineskiplimit\lineskip
		\ialign{\hfil$\m@th\scriptstyle##$&$\m@th\scriptstyle{}##$\hfil\crcr
		  #1\crcr
		}%
	  }%
	}
\newcommand{\N}{\mathbb{N}}
\newcommand{\R}{\mathbb{R}}
\newcommand{\LL}{\mathcal{L}}
\DeclarePairedDelimiter{\set}{\{}{\}}
\DeclarePairedDelimiter{\abs}{\lvert}{\rvert}
\DeclarePairedDelimiter{\norm}{\lVert}{\rVert}
\newcommand{\kset}{\mathcal{L}_k}
\newcommand{\Ex}{\mathbf{E}}
\newcommand{\Var}{\mathbf{Var}}
\newcommand{\Prob}{\mathbf{P}}
\newcommand{\compset}{\mathcal{L}_{k}}
\newcommand{\oneton}{[n]}
\newcommand{\onetom}{[m]}
\newcommand{\card}{\abs}
\newcommand{\oneover}[1]{\frac{1}{#1}}
\newcommand{\indic}[1]{\mathbf{1}_{#1}}
\newcommand{\eps}{\varepsilon}
\newcommand{\anti}[1]{\mathcal{L}(#1,\eps)}
\newcommand{\scalar}[1]{\langle #1 \rangle}
\renewcommand{\phi}{\varphi}
\newtheorem{lemma}{Lemma}
\newtheorem{proposition}{Proposition}
\newtheorem{theorem}{Theorem}
\newtheorem*{theorem*}{Theorem}
\newtheorem{corollary}{Corollary}
\begin{document}

\title{\textbf{The discrepancy between min-max statistics \\ of Gaussian and Gaussian-subordinated matrices}}
\author{Giovanni Peccati and Nicola Turchi\thanks{\textit{Department of Mathematics, University of Luxembourg.} \\ \hspace*{15pt} The authors are supported by
the FNR grant FoRGES (R-AGR-3376-10).}}
\date{\today}

\maketitle 

\begin{abstract}
We compute quantitative bounds for measuring the discrepancy between the distribution of two min-max statistics involving either pairs of Gaussian random matrices, or one Gaussian and one Gaussian-subordinated random matrix. In the fully Gaussian setup, our approach allows us to recover quantitative versions of well-known inequalities by Gordon (1985, 1987, 1992), thus generalising the quantitative version of the Sudakov-Fernique inequality deduced in Chatterjee (2005). On the other hand, the Gaussian-subordinated case yields generalizations of estimates by Chernozhukov et al. (2015) and Koike (2019). As an application, we establish fourth moment bounds for matrices of multiple stochastic Wiener-It\^o integrals, that we illustrate with an example having a statistical flavour. 

\smallskip

\noindent\textbf{Keywords}: Min-max Statistics; Random Matrices; Gaussian Vectors; Gaussian Fields; Gaussian Analysis; Probabilistic Approximations; Malliavin Calculus.

\smallskip

\noindent\textbf{AMS 2020 Classification}: 60F05; 60G15; 60G70; 60H05; 60H07.

\end{abstract}

\section{Introduction}
\subsection{Overview of our contributions}

In the theory of Gaussian processes an important role is played by inequalities of the \textit{Sudakov-Fernique type}. These results consist in comparisons between extremal value statistics of two distinct Gaussian objects, for example, the maxima of two Gaussian random vectors with different variances. The classical Sudakov-Fernique inequality states that, if \((X_1,\ldots,X_n)\) and \((Y_1,\ldots,Y_n)\) are centered Gaussian random vectors such that \(\Ex\bigl((X_i-X_j)^2\bigr)\le\Ex\bigl((Y_i-Y_j)^2\bigr)\) for all pairs of indices, then
\begin{equation}
	\label{eq:SF}
	\Ex\Bigl(\max_{i\in\{1,\ldots,n\}}X_i\Bigr)\le \Ex\Bigl(\max_{i\in\{1,\ldots,n\}} Y_i\Bigr).
\end{equation}
The inequality \eqref{eq:SF} first appeared in the works of Sudakov \cite{S1,S2} and Fernique \cite{F1},
and a proof is also due to Alexander \cite{A}. Vitale \cite{V} was able to remove the zero-mean assumption with the weaker condition that \(\Ex\bigl((X_1,\ldots, X_n)\bigr)=\Ex\bigl((Y_1,\ldots, Y_n)\bigr)\). Similar inequalities have been studied by Gordon \cite{G1,G2} and Kahane \cite{Ka} in the more general setting of higher-dimensional tensors \((X_{i_1,i_2,\ldots, i_d})\), where the minimum of the classic Sudakov-Fernique inequality is replaced by quantities of the type  \(\min_{i_1}\max_{i_2}\cdots X_{i_1,i_2,\ldots, i_d}\). In \cite{G3} Gordon also studied the comparison between the sums of the first \(k\) ordered statistics of two Gaussian random vectors.  See e.g. \cite{AT07, NPV14}, and the references therein, for a sample of applications of estimates directly related to \eqref{eq:SF} --- ranging from the geometry of Gaussian fields, to stochastic differential equations and statistical mechanics. 

Whereas the aforementioned results are mostly qualitative, in the reference \cite{C} one can find a quantitative counterpart to \eqref{eq:SF}, using integration by parts formulas (see also \cite[Section 2.3]{AT07}). More precisely, in \cite{C} it is established that, if the two Gaussian vectors \(X\) and \(Y\) have the same mean, then
\begin{equation}
	\label{e:chatt}
	\abs[\Big]{\Ex\Bigl(\max_{i\in\{1,\ldots,n\}}X_i\Bigr)-\Ex\Bigl(\max_{i\in\{1,\ldots,n\}} Y_i\Bigr)}\le\sqrt{\max_{i,j}\abs[\big]{\Ex\bigl((X_i-X_j)^2\bigr)-\Ex\bigl((Y_i-Y_j)^2\bigr)}\log n}.
\end{equation}
In the first part of the present work, we extend the study of quantitative bounds of the type \eqref{e:chatt} to the setting considered by Gordon \cite{G1,G2,G3} of min-max statistics of Gaussian random matrices. To motivate the reader, we report below one of our principal contributions on the matter --- see  \Cref{sec:SF} for a full statement and for its proof.

\begin{theorem*}
	Let \((X_{i_1,i_2})\) and \((Y_{j_1,j_2})\) be two $n\times m$ Gaussian random matrices with the same expectation. Then 
	\[
	\begin{split}
		\abs[\Big]{\Ex\bigl(\min_{i_1}\max_{i_2}X_{i_1,i_2}\bigr) - \Ex\bigl(\min_{i_1}\max_{i_2}Y_{i_1,i_2}\bigr)}
		\le&\sqrt{\max_{i_1,i_2,j_1,j_2}\abs[\big]{\Ex\bigl((X_{i_1,i_2}-X_{j_1,j_2})^2\bigr)-\Ex\bigl((Y_{i_1,i_2}-Y_{j_1,j_2})^2\bigr)}} \\
		&\times\Bigl[ \sqrt{\Bigl(1 -\frac{1}{n}\Bigr)\Bigl(2-\oneover{m}\Bigr)\log n}+\sqrt{\Bigl(1-\frac{1}{m}\Bigr)\log m}\Bigr].
			\end{split}
	\]
\end{theorem*}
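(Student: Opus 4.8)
The plan is to adapt the smoothing-plus-interpolation scheme of Chatterjee \cite{C} to the iterated structure of $\min_{i_1}\max_{i_2}$. For parameters $\beta_1,\beta_2>0$ introduce the smooth surrogate
\[
\phi(x)\ :=\ -\frac{1}{\beta_1}\log\sum_{i_1=1}^{n}\exp\!\Bigl(-\frac{\beta_1}{\beta_2}\log\sum_{i_2=1}^{m}e^{\beta_2 x_{i_1,i_2}}\Bigr),\qquad x=(x_{i_1,i_2})\in\R^{n\times m},
\]
i.e. the $\beta_1$-soft-minimum over $i_1$ of the $\beta_2$-soft-maxima over $i_2$; this $\phi$ is $C^\infty$ with partial derivatives of every order bounded. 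The classical one-sided estimates for soft-min and soft-max give, pointwise, $-\log n/\beta_1\le\phi(x)-\min_{i_1}\max_{i_2}x_{i_1,i_2}\le\log m/\beta_2$, so that, writing $\Psi(M):=\min_{i_1}\max_{i_2}M_{i_1,i_2}$,
\[
\bigl|\,\Ex\Psi(X)-\Ex\Psi(Y)-\bigl(\Ex\phi(X)-\Ex\phi(Y)\bigr)\,\bigr|\ \le\ \frac{\log n}{\beta_1}+\frac{\log m}{\beta_2}.
\]

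To control $\Ex\phi(X)-\Ex\phi(Y)$ I would run the Gaussian interpolation. Since $\Ex X=\Ex Y=:\mu$, write $X=\mu+X'$, $Y=\mu+Y'$ with $X',Y'$ centred and independent, and set $Z(t)=\mu+\sqrt t\,X'+\sqrt{1-t}\,Y'$; differentiating $t\mapsto\Ex\phi(Z(t))$ and integrating by parts in the Gaussian vectors $X'$ and $Y'$ yields
\[
\Ex\phi(X)-\Ex\phi(Y)=\frac12\int_0^1\sum_{K,L}\bigl(\mathrm{Cov}(X_K,X_L)-\mathrm{Cov}(Y_K,Y_L)\bigr)\,\Ex\bigl[\partial_K\partial_L\phi(Z(t))\bigr]\,dt,
\]
the indices $K,L$ ranging over $[n]\times[m]$. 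The crucial structural fact is that the Hessian $H(x):=\bigl(\partial_K\partial_L\phi(x)\bigr)$ has vanishing row (hence column) sums: indeed $\partial_{(k_1,k_2)}\phi=q_{k_1}\,p^{(k_1)}_{k_2}$, where $q=(q_{k_1})$ are the soft-min weights over the first index and $p^{(k_1)}=(p^{(k_1)}_{k_2})$ the soft-max weights inside row $k_1$, both probability vectors, and differentiating this identity once more and summing over $L$ collapses to $0$. Vanishing row sums let one trade covariances for squared increments: since $\Ex(U_K-U_K)^2=0$,
\[
\sum_{K,L}\bigl(\mathrm{Cov}(X_K,X_L)-\mathrm{Cov}(Y_K,Y_L)\bigr)H_{KL}=-\frac12\sum_{K\ne L}\bigl(\Ex(X_K-X_L)^2-\Ex(Y_K-Y_L)^2\bigr)H_{KL},
\]
whence $\bigl|\Ex\phi(X)-\Ex\phi(Y)\bigr|\le\tfrac14\,\Delta\,\sup_{x}\sum_{K\ne L}|H_{KL}(x)|$, with $\Delta:=\max_{K,L}\bigl|\Ex(X_K-X_L)^2-\Ex(Y_K-Y_L)^2\bigr|$.

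It then remains to bound the off-diagonal $\ell^1$-norm of $H$. A direct computation shows the off-diagonal entries split into two blocks: a ``different-row'' block $H_{(k_1,k_2),(l_1,l_2)}=\beta_1 q_{k_1}p^{(k_1)}_{k_2}\,q_{l_1}p^{(l_1)}_{l_2}\ge0$ (for $k_1\ne l_1$), inherited from the soft-minimum layer, and a ``same-row'' block $H_{(a,k_2),(a,l_2)}=-q_a\,p^{(a)}_{k_2}p^{(a)}_{l_2}\bigl(\beta_1(1-q_a)+\beta_2\bigr)\le0$ (for $k_2\ne l_2$), which feels both layers. Summing absolute values, using $\sum_{k_2}p^{(a)}_{k_2}=\sum_{k_1}q_{k_1}=1$ together with the Cauchy--Schwarz lower bounds $\sum_{k_1}q_{k_1}^2\ge 1/n$ and $\sum_{k_2}(p^{(a)}_{k_2})^2\ge 1/m$, gives
\[
\sum_{K\ne L}|H_{KL}(x)|\ \le\ \beta_1\Bigl(1-\tfrac1n\Bigr)\Bigl(2-\tfrac1m\Bigr)+\beta_2\Bigl(1-\tfrac1m\Bigr)
\]
uniformly in $x$. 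Combining the three displays and optimising over $\beta_1$ and $\beta_2$ independently — the $\beta_1$-dependent terms $\log n/\beta_1+\tfrac14\Delta(1-1/n)(2-1/m)\beta_1$ contribute $\sqrt{\Delta(1-1/n)(2-1/m)\log n}$, and the $\beta_2$-dependent terms $\log m/\beta_2+\tfrac14\Delta(1-1/m)\beta_2$ contribute $\sqrt{\Delta(1-1/m)\log m}$ — yields exactly the claimed estimate.

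The step I expect to be delicate is the explicit Hessian computation together with the extraction of the sharp constants in $\sum_{K\ne L}|H_{KL}|$: one must keep the two layers of soft weights separate, correctly identify that the different-row and same-row off-diagonal blocks carry opposite signs, and apply Cauchy--Schwarz precisely where it produces the factor $2-\tfrac1m$ rather than a crude $2$ and the factors $1-\tfrac1n$, $1-\tfrac1m$. The remaining ingredients — the soft-min/soft-max sandwich, the Gaussian interpolation identity (standard since $\phi$ has bounded derivatives), and the two scalar optimisations — are routine.
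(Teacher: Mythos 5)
Your proposal is correct and follows essentially the same route as the paper's proof of Theorem \ref{thm:bound} specialised to $k=1$: the double soft-min/soft-max surrogate (your $(\beta_1,\beta_2)$ is the paper's $(\beta\delta,\beta)$), the sandwich of Lemma \ref{lem:unifbound}, Gaussian interpolation, the zero-row-sum Hessian identity and covariance-to-increment trick (Lemmas \ref{lem:0sum}--\ref{lem:Gordontrick}), the sign and $\ell^1$ analysis of the off-diagonal Hessian with the same Cauchy--Schwarz constants $1-\tfrac1n$, $2-\tfrac1m$, $1-\tfrac1m$ (Lemmas \ref{lem:signs}--\ref{lem:absbound}), and the same two scalar optimisations. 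Your computations check out (apart from the harmless typo $\Ex(U_K-U_K)^2=0$), so nothing essential is missing.
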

We will see that our techniques also allow one to recover as special cases virtually all comparison statements for min-max statistics (and their generalizations, like sums of order statistics) proved in \cite{G1,G2,G3}. In particular, an interesting application of our findings is the comparison between the order statistics of two Gaussian random vectors, once they are regarded as min-max of particular Gaussian random matrices. For instance, we can show that
\[
	\abs[\big]{\Ex(X_{(n-1)})-\Ex(Y_{(n-1)})}\le(\sqrt{2}+1)\sqrt{\max_{i,j}\abs[\big]{\Ex\bigl((X_i-X_j)^2\bigr)-\Ex\bigl((Y_i-Y_j)^2\bigr)}\log n},
\]
where the index \((n-1)\) indicates the second maximum coordinate of a \(n\text{-dimensional}\) vector. See Corollary \ref{c:order} below.

It is apparent that bounds such as the ones described above, involving only first moments of extremal statistics, cannot completely describe the similarity between the distributions of the involved quantities. To overcome this shortcoming, Chernozhukov, Chetverikov, and Kato have established in references \cite{CCK15,CCK16} (which crucial installments of the so-called \textit{CCK theory}) bounds on the \textit{Kolmogorov distance} between the laws of the maxima of two Gaussian random vectors, so as to give a more precise description of their closeness. In order to achieve their results, the authors employ some novel anti-concentration inequalities for the maximum statistic of a Gaussian random process. These contributions have been recently extended by Koike in \cite{K} --- to which we refer the reader for a more comprehensive overview of the CCK theory --- where bounds are established on the discrepancy between the maxima of a Gaussian random vector and a smooth Gaussian-subordinated random element.

In \Cref{sec:D12} (see, in particular, Theorem \ref{thm:main}) we generalize some of the results from \cite{CCK15,CCK16, K} to the aforementioned setting of min-max statistics of random matrices: in particular, we derive a bound for the Kolmogorov distance between the laws of the min-max statistics of two random matrices, one of which is Gaussian. In order to do so, we need to recover some new anti-concentration inequalities suitable for our purposes; see for instance Proposition \ref{prop:anticoncentration} and Lemma \ref{lem:anti-con2} below.

One important by-product of our findings are estimates involving matrices of \textit{multiple Wiener-It\^o integrals} (see e.g. \cite[Chapter 2]{NP}), to which we will devote Section \ref{s:mwi}. As an example of application of such estimates, suppose that every entry \((i_1,i_2)\) of an \(n\times m\) matrix is given by the following random quadratic form
\[
	F_{i_1,i_2}=\sum_{u,v=1}^d A_{i_1,i_2}(u,v)\xi_{u}\xi_v-\Ex\Bigl(\sum_{u,v=1}^d A_{i_1,i_2}(u,v)\xi_{u}\xi_v\Bigr),
\]
where \(A_{i_1,i_2}(\cdot ,\cdot)\) is a real-valued symmetric matrix for all \((i_1,i_2)\) and \(\xi_1,\ldots,\xi_d\) is a \(d\text{-dimensional}\) Gaussian random vector. If \((X_{i_1,i_2})\) is a \(n\times m\) centered Gaussian random matrix with the same covariance structure as \((F_{i_1,i_2})\), then one has that
\[
d_{\text{Kol}}(\min_{i_1}\max_{i_2} F_{i_1,i_2},\min_{i_1}\max_{i_2} X_{i_1,i_2})\le C \max_{i_1,i_2}\bigl(\Ex( F^4_{i_1,i_2})-3\Ex(F_{i_1,i_2}^2)^2\bigr)^{1/6} n^{2/3}(\log m)^{1/3}(\log nm)^{2/3}.
\]
where \(C>0\) is an absolute constant and $d_{\text{Kol}}(U,V)$ stands for the Kolmogorov distance between the distribution of the random variables $U,V$ (see \cite[Appendix C]{NP}). An illustration of these findings --- inspired by the statistical theory developed in \cite{K} --- is presented in \Cref{sec:stat}.

\subsection{Notation}\label{ss:notation}
For \(m\in\N\), we write \([m]\) to indicate the sets of integers \(\{1,\ldots,m\}\). 
For \(k\in\onetom\), \(p\in[k]\), \(\set{a_1,\ldots,a_p}\subseteq[m]\) and \(\set{b_1,\ldots,b_q}\subseteq[m]\setminus\set{a_1,\ldots,a_p}\), we define the sets
\begin{align*}
	\kset
	&\coloneqq\set{L\subseteq 2^{\onetom}:\card L= k},\\
	\kset^{a_1\ldots a_p\, \hat b_1\ldots \hat b_q}
	&\coloneqq\set{L\subseteq 2^{\onetom}:\card L= k, \set{a_1,\ldots,a_p}\subseteq L\text{ and } \set{b_1,\ldots,b_q}\subseteq 2^{[m]} \setminus L}.
\end{align*}
Note that \(\card{\kset}=\binom{m}{k}\) and \(\card{\kset^{a_1\ldots a_p\, \hat b_1\ldots \hat b_q}}=\binom{m-p-q}{k-p-q}\).

For \( z=(z_1,\ldots,z_d)\in\R^d\), let \(z_{k}\in\set{z_1,\ldots,z_d}\) be the \(k\text{-th} \) ordered statistics of its components, i.e. \[\min_{i\in[d]} z_i=z_{(1)}\le\ldots\le z_{(k)}\le \ldots z_{(d)}=\max_{i\in[d]} z_i.
\]
If \(z=(z_{i_1,i_2})_{(i_1,i_2)\in\oneton\times\onetom}\in\R^{n\times m}\), we write its \(i_1\text{-th}\) row as \(z_{i_1,\cdot}=(z_{i_1,1},\ldots,z_{i_1,m})\in\R^m\). In particular  \(z_{i_1,(k)}\) indicates the \(k\text{-th}\) ordered statistics of the vector \(z_{i_1,\cdot}\) and \(z_{\cdot, (k)}\) stands for the vector \((z_{1,(k)},\ldots, z_{n,(k)})\in\R^n\). Throughout the paper, we will refer to the quantity
\[
	\min\max z\coloneqq\min_{i_1\in [n]} z_{i_1, (m)} = \min_{i_1\in [n]} \max_{i_2\in [m]} z_{i_1, i_2}
\]	
as the \textit{min-max statistic} of the matrix $z$.  We will always work on a fixed probability space \((\Omega,\mathcal{F},\Prob)\) and write \(\Ex\) for the expectation with respect to \(\Prob\).

\section{Comparison of min-max statistics for two Gaussian random matrices}
\label{sec:SF}

\subsection{Main estimates}

The forthcoming statement is one of the main contributions of the present work, containing as special cases several results evoked in the Introduction; in particular, the inequalities \eqref{eq:SF}--\eqref{e:chatt} correspond to the case $n=k=1$, $m\geq 1$ of our result; Theorem 1.4 in \cite{G1} corresponds to the case $n,m\geq 1$ and $k=1$; Theorem 1.3 in \cite{G3} corresponds to the choice $n=1$, $m\geq 1$ and $k\leq m$ --- see the subsequent discussion.

\begin{theorem}
	\label{thm:bound}
	Let \(X=(X_{i_1,i_2})_{(i_1,i_2)\in\oneton\times\onetom}\) and \(Y=(Y_{i_1,i_2})_{(i_1,i_2)\in\oneton\times\onetom}\) be two Gaussian random matrices with \(\Ex(X_{i_1,i_2})=\Ex(Y_{i_1,i_2})\) for every \((i_1,i_2)\in\oneton\times\onetom\).  Define \(\gamma^X_{i_1,i_2;j_1,j_2}\coloneqq\Ex(( X_{i_1,i_2}-X_{j_1,j_2})^2)\), \(\gamma^Y_{i_1,i_2;j_1,j_2}\coloneqq\Ex(( Y_{i_1,i_2}-Y_{j_1,j_2})^2)\) and let 
	\[
		\gamma\coloneqq\max\limits_{\subalign{(i_1,i_2)&\in\oneton\times\onetom\\(j_1,j_2)&\in\oneton\times\onetom}}\abs[\big]{\gamma^X_{i_1,i_2;j_1,j_2}-\gamma^Y_{i_1,i_2;j_1,j_2}}.
	\] 
	Then, for all $k\in[m]$,
	\begin{equation}\label{e:mainz}
	\begin{split}
		\abs[\Big]{\Ex\Big(\min_{i_1\in\oneton}\sum_{h=m-k+1}^m X_{i_1,(h)}\Bigr) &- \Ex\Big(\min_{i_1\in\oneton}\sum_{h=m-k+1}^m Y_{i_1,(h)}\Bigr)}\\
		&\le\sqrt{\gamma k} \cdot \Bigl[ \sqrt{k\Bigl(1 -\frac{1}{n}\Bigr)\Bigl(2-\oneover{m}\Bigr)\log n}+\sqrt{\Bigl(1-\frac{k}{m}\Bigr)\log\binom{m}{k}}\Bigr].
			\end{split}
	\end{equation}
	Moreover,  if, for every \((i_2,j_2)\in\oneton\times\onetom\),
	\[
	\begin{cases}
	\gamma^X_{i_1,i_2;j_1,j_2}\le \gamma^Y_{i_1,i_2;j_1,j_2} &\quad \text{ if } i_1=j_1 \\
	\gamma^X_{i_1,i_2;j_1,j_2}\ge \gamma^Y_{i_1,i_2;j_1,j_2} &\quad \text{ if } i_1\neq j_1,
	\end{cases}
	\]
	then
	\[
	\Ex\Big(\min_{i_1\in\oneton}\sum_{h=m-k+1}^m X_{i_1,(h)}\Bigr)\le \Ex\Big(\min_{i_1\in\oneton}\sum_{h=m-k+1}^m Y_{i_1,(h)}\Bigr).
	\]
\end{theorem}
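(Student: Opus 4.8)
The idea is to run a quantitative Gaussian interpolation in the spirit of \cite{C}, applied to a smooth surrogate of the min-max functional.

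\emph{Reduction and smoothing.} First I would use the identity $\sum_{h=m-k+1}^{m}z_{(h)}=\max_{L\in\kset}\sum_{\ell\in L}z_\ell$, valid for every $z\in\R^m$, to rewrite the quantity to be estimated as $M(z):=\min_{i_1\in\oneton}\max_{L\in\kset}\sum_{\ell\in L}z_{i_1,\ell}$ --- a min-max statistic of the Gaussian array indexed by $\oneton\times\kset$. For parameters $\beta_1,\beta_2>0$ I introduce the smooth surrogate $F(z):=-\tfrac1{\beta_1}\log\sum_{i_1\in\oneton}\exp\bigl(-\beta_1\widetilde g_{i_1}(z)\bigr)$, where $\widetilde g_{i_1}(z):=\tfrac1{\beta_2}\log\sum_{L\in\kset}\exp\bigl(\beta_2\sum_{\ell\in L}z_{i_1,\ell}\bigr)$; note $F$ is smooth with partial derivatives in $[0,1]$. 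The elementary log-sum-exp bounds give, pointwise, $-\tfrac{\log n}{\beta_1}\le F(z)-M(z)\le\tfrac1{\beta_2}\log\binom{m}{k}$, and since the approximation errors for $X$ and for $Y$ lie in one and the same interval, $\abs{\Ex M(X)-\Ex M(Y)}\le\tfrac{\log n}{\beta_1}+\tfrac1{\beta_2}\log\binom{m}{k}+\abs{\Ex F(X)-\Ex F(Y)}$.

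\emph{Interpolation and a cancellation identity.} Next, taking $X$ and $Y$ independent, set $\mu:=\Ex X=\Ex Y$ and $Z(t):=\mu+\sqrt t\,(X-\mu)+\sqrt{1-t}\,(Y-\mu)$; the Gaussian interpolation formula gives $\tfrac{d}{dt}\Ex F(Z(t))=\tfrac12\sum_{a,b}(\sigma^X_{ab}-\sigma^Y_{ab})\,\Ex\,\partial^2_{ab}F(Z(t))$ with $a,b$ running over $\oneton\times\onetom$ and $\sigma^X_{ab}=\mathrm{Cov}(X_a,X_b)$. Introducing the Gibbs probability vectors $p_{i_1}\propto e^{-\beta_1\widetilde g_{i_1}}$ on $\oneton$, $w^{(i_1)}_L\propto e^{\beta_2\sum_{\ell\in L}z_{i_1,\ell}}$ on $\kset$, and $q^{(i_1)}_{i_2}:=\Prob_{w^{(i_1)}}(i_2\in L)$, a direct computation gives $\partial_{(i_1,i_2)}F=p_{i_1}q^{(i_1)}_{i_2}$ and
\[
\partial^2_{(i_1,i_2),(j_1,j_2)}F=\beta_1 p_{i_1}\bigl(\indic{i_1\neq j_1}\,p_{j_1}-\indic{i_1=j_1}(1-p_{i_1})\bigr)q^{(i_1)}_{i_2}q^{(j_1)}_{j_2}+\indic{i_1=j_1}\,\beta_2 p_{i_1}\,\mathrm{Cov}_{w^{(i_1)}}\bigl(\indic{i_2\in L},\indic{j_2\in L}\bigr).
\]
Using $\sum_{i_2}q^{(i_1)}_{i_2}=k$, $\sum_{j_1}p_{j_1}=1$ and $\sum_{j_2}\mathrm{Cov}_{w^{(i_1)}}(\indic{i_2\in L},\indic{j_2\in L})=\mathrm{Cov}_{w^{(i_1)}}(\indic{i_2\in L},\card L)=0$, one checks $\sum_b\partial^2_{ab}F=0$ for every $a$, and hence $\sum_a\partial^2_{ab}F=0$ (the Hessian being symmetric). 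I then write $\sigma^X_{ab}-\sigma^Y_{ab}=\tfrac12\bigl[(\sigma^X_{aa}-\sigma^Y_{aa})+(\sigma^X_{bb}-\sigma^Y_{bb})-d_{ab}\bigr]$ with $d_{ab}:=\gamma^X_{a;b}-\gamma^Y_{a;b}$ (so $\abs{d_{ab}}\le\gamma$ and $d_{aa}=0$): the first two pieces vanish against the Hessian, leaving $\tfrac{d}{dt}\Ex F(Z(t))=-\tfrac14\sum_{a,b}d_{ab}\,\Ex\,\partial^2_{ab}F(Z(t))$.

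\emph{Estimating the sum and optimizing.} I would then split $S:=\sum_{a,b}d_{ab}\partial^2_{ab}F=(\mathrm I)+(\mathrm{II}_a)+(\mathrm{II}_b)$, where $(\mathrm I)$ collects the terms with $i_1\neq j_1$ and $(\mathrm{II}_a),(\mathrm{II}_b)$ are the contributions of the first, resp.\ second, summand above when $i_1=j_1$. Using $\abs{d_{ab}}\le\gamma$, $\sum_{i_2}q^{(i_1)}_{i_2}=k$ and $\sum_{i_1}p_{i_1}^2\ge1/n$ one gets $\abs{(\mathrm I)}\le\gamma\beta_1 k^2(1-1/n)$. In $(\mathrm{II}_a),(\mathrm{II}_b)$ the diagonal pairs $i_2=j_2$ drop out because $d_{aa}=0$: since $\sum_{i_2\neq j_2}q^{(i_1)}_{i_2}q^{(i_1)}_{j_2}=k^2-\sum_{i_2}(q^{(i_1)}_{i_2})^2\le k^2(1-1/m)$, one gets $\abs{(\mathrm{II}_a)}\le\gamma\beta_1 k^2(1-1/m)\sum_{i_1}p_{i_1}(1-p_{i_1})\le\gamma\beta_1 k^2(1-1/m)(1-1/n)$; and, using the \emph{negative correlation} $\mathrm{Cov}_{w^{(i_1)}}(\indic{i_2\in L},\indic{j_2\in L})\le0$ for $i_2\neq j_2$ (together with $\sum_{j_2}\mathrm{Cov}_{w^{(i_1)}}(\indic{i_2\in L},\indic{j_2\in L})=0$), one finds $\sum_{i_2\neq j_2}\abs{\mathrm{Cov}_{w^{(i_1)}}(\indic{i_2\in L},\indic{j_2\in L})}=k-\sum_{i_2}(q^{(i_1)}_{i_2})^2\le k(1-k/m)$, so $\abs{(\mathrm{II}_b)}\le\gamma\beta_2 k(1-k/m)$. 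This negative-correlation fact is classical: for a measure $\propto\prod_{\ell\in L}x_\ell$ on $\kset$ it reduces, after cancelling $x_{i_2}x_{j_2}$, to $e_{k-2}e_k\le e_{k-1}^2$ for the elementary symmetric polynomials in the remaining variables, i.e.\ to Newton's inequality. Adding up, $\abs S\le\gamma\beta_1 k^2(1-1/n)(2-1/m)+\gamma\beta_2 k(1-k/m)$, whence $\abs{\Ex F(X)-\Ex F(Y)}\le\tfrac14\gamma\beta_1 k^2(1-1/n)(2-1/m)+\tfrac14\gamma\beta_2 k(1-k/m)$. Plugging this into the smoothing bound and minimizing over $\beta_1,\beta_2>0$ separately (each piece being of the form $A/\beta+B\beta$, with minimum $2\sqrt{AB}$) yields $\abs{\Ex M(X)-\Ex M(Y)}\le\sqrt{\gamma k^2(1-1/n)(2-1/m)\log n}+\sqrt{\gamma k(1-k/m)\log\binom{m}{k}}$, which is exactly \eqref{e:mainz} after factoring $\sqrt{\gamma k}$.

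\emph{The comparison inequality, and the main difficulty.} For the qualitative part, under the stated sign hypotheses one has $d_{ab}\ge0$ when $i_1\neq j_1$ and $d_{ab}\le0$ when $i_1=j_1$; inspecting the three summands --- using once more $\mathrm{Cov}_{w^{(i_1)}}(\indic{i_2\in L},\indic{j_2\in L})\le0$ for $i_2\neq j_2$ --- shows $(\mathrm I),(\mathrm{II}_a),(\mathrm{II}_b)\ge0$, hence $S\ge0$ and $\tfrac{d}{dt}\Ex F(Z(t))\le0$ for all $t$; integrating gives $\Ex F(X)\le\Ex F(Y)$, and letting $\beta_1,\beta_2\to\infty$ (the approximation error being uniform) gives $\Ex M(X)\le\Ex M(Y)$. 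I expect the main obstacle to be the exact computation of the second partials together with the cancellation $\sum_b\partial^2_{ab}F=0$ --- which is what confines the dependence of the bound to the $\gamma$'s --- and the tight bookkeeping above: the factor $2-1/m$ comes precisely from \emph{adding} $\abs{(\mathrm I)}$ and $\abs{(\mathrm{II}_a)}$ after discarding the diagonal $i_2=j_2$ in $(\mathrm{II}_a)$, whereas a crude triangle inequality applied directly to the Hessian would cost a constant here. The negative-correlation input, though classical, is genuinely needed --- both for the constant $k(1-k/m)$ and for the sign in the comparison statement.
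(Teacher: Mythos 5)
Your proposal is correct and follows essentially the same route as the paper: the two-parameter log-sum-exp surrogate (your $(\beta_1,\beta_2)$ is the paper's $(\beta\delta,\beta)$), the vanishing row-sums of the Hessian combined with the rewriting of $\sigma^X-\sigma^Y$ in terms of the increments $\gamma$, the sign analysis and absolute-sum bounds on the Hessian with exactly the same constants, and the final interpolation plus optimization. The only cosmetic differences are your Gibbs-measure/covariance formulation of the second partials and your appeal to Newton's inequality $E_{k-1}^2\ge E_kE_{k-2}$, where the paper gives a self-contained combinatorial multiplicity count of the same fact.
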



\medskip

\noindent\textit{Remark.} In order to substantiate the claims preceding the statement of Theorem \ref{thm:bound}, we put forward the following two special cases: (i) when $n=1$, then $X$ and $Y$ are $m$-dimensional Gaussian vectors, and the quantities inside the expectations on the left-hand side of \eqref{e:mainz} are the sums of the order statistics of orders $m-k+1$ up to $m$ of $X$ and $Y$ (in particular, when $k=1$ we recover the maxima); (ii) when $k=1$ and no restrictions are put on $n,m$ then the random variables on the left-hand side of \eqref{e:mainz} are the min-max statistics of $X$ and $Y$.

\medskip

\noindent\textit{Remark.} There is no conceptual obstacle in extending \Cref{thm:bound} to the more general case of a \(d\text{-dimensional}\) Gaussian tensor \((X_{i_1,\dots,i_d})_{(i_1,\dots,i_d)\in[n_1]\times\cdots\times[n_d]}\) and the investigation of the quantity (say \(d\) is even without loss of generality and \(k_i\le n_i\) for all \(i\le d\))
\[
\sum_{h_1=1}^{k_1}\sum_{h_2=n_2-k_2+1}^{n_2}\cdots\sum_{h_{d-1}=1}^{k_{d-1}}\sum_{h_d=n_d-k_d+1}^{n_d} (\cdots(X_{\cdot,\ldots,\cdot,(h_d)})_{\cdot,\ldots,\cdot,(h_{d-1})})\cdots)_{(h_1)},
\]
but we decided not to perform it explicitly, in order to keep the length of the paper within bounds.

\medskip

One remarkable consequence of \Cref{thm:bound} is that it yields comparison criteria for the expected values of order statistics associated with Gaussian random vectors.

\begin{corollary}\label{c:order}
	Let \(W=(W_i)_{i\in[d]}\) and \(Z=(Z_i)_{i\in[d]}\) be two Gaussian random vectors with \(\Ex(W_i)=\Ex(Z_i)\) for every \(i\in[d]\), and let \(\gamma=\max_{(i,j)\in[d]^2} \abs{\Ex\bigl((W_i-W_j)^2\bigl)-\Ex\bigl((Z_i-Z_j)^2\bigl)}\). Then, for any \(h\in[d]\),
		\[
			\abs[\big]{\Ex (W_{(h)}) -\Ex (Z_{(h)}) }\le \sqrt{\gamma}\Bigl(\sqrt{2\log\binom{d}{h}}+\sqrt{\log h}\Bigr)\le \sqrt{\gamma}\Bigl(\sqrt{2h(1+\log (d/h))}+\sqrt{\log h}\Bigr).
		\]
\end{corollary}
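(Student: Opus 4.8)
The plan is to represent the $h$-th order statistic of a $d$-dimensional vector as the min-max statistic of an auxiliary Gaussian matrix, and then to apply \Cref{thm:bound} with $k=1$. The elementary fact I would build on is the identity
\[
z_{(h)}=\min_{\substack{L\subseteq[d]\\\card L=h}}\ \max_{i\in L}z_i\qquad(z\in\R^d,\ h\in[d]),
\]
which has a one-line proof: at most $h-1$ coordinates of $z$ are strictly smaller than $z_{(h)}$, so every $L$ with $\card L=h$ contains an index $i$ with $z_i\ge z_{(h)}$, whence $\max_{i\in L}z_i\ge z_{(h)}$; conversely, taking $L$ to consist of $h$ indices realising the values $z_{(1)}\le\cdots\le z_{(h)}$ gives $\max_{i\in L}z_i=z_{(h)}$.

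Next I would fix $h\in[d]$, set $n:=\binom{d}{h}$ and $m:=h$, enumerate the $h$-element subsets of $[d]$ as $L^{(1)},\dots,L^{(n)}$, and — writing each one in increasing order $L^{(i_1)}=\{\ell^{(i_1)}_1<\cdots<\ell^{(i_1)}_h\}$ — define the $n\times m$ Gaussian matrices $X_{i_1,i_2}:=W_{\ell^{(i_1)}_{i_2}}$ and $Y_{i_1,i_2}:=Z_{\ell^{(i_1)}_{i_2}}$ for $(i_1,i_2)\in\oneton\times\onetom$. These are Gaussian matrices with $\Ex(X_{i_1,i_2})=\Ex(W_{\ell^{(i_1)}_{i_2}})=\Ex(Z_{\ell^{(i_1)}_{i_2}})=\Ex(Y_{i_1,i_2})$; the identity above yields $\min_{i_1}\max_{i_2}X_{i_1,i_2}=\min_{L}\max_{i\in L}W_i=W_{(h)}$ and, symmetrically, $\min_{i_1}\max_{i_2}Y_{i_1,i_2}=Z_{(h)}$; and since each entry of $X$ (resp.\ $Y$) is a coordinate of $W$ (resp.\ $Z$), every difference $\gamma^X_{i_1,i_2;j_1,j_2}-\gamma^Y_{i_1,i_2;j_1,j_2}$ equals $\Ex((W_a-W_b)^2)-\Ex((Z_a-Z_b)^2)$ for suitable $a,b\in[d]$, so the constant playing the role of $\gamma$ in \Cref{thm:bound} for the pair $(X,Y)$ is bounded above by the constant $\gamma$ of the corollary.

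Applying \Cref{thm:bound} with $k=1$ (so that the inner sum reduces to the single term $z_{i_1,(m)}$) then gives at once
\[
\abs[\big]{\Ex(W_{(h)})-\Ex(Z_{(h)})}\le\sqrt{\gamma}\,\Bigl[\sqrt{\bigl(1-\tbinom{d}{h}^{-1}\bigr)\bigl(2-h^{-1}\bigr)\log\tbinom{d}{h}}+\sqrt{\bigl(1-h^{-1}\bigr)\log h}\,\Bigr],
\]
and the first asserted inequality follows from $(1-\binom{d}{h}^{-1})(2-h^{-1})\le 2$ and $1-h^{-1}\le 1$. For the second asserted inequality I would insert the standard bound $\binom{d}{h}\le(ed/h)^h$, i.e.\ $\log\binom{d}{h}\le h\bigl(1+\log(d/h)\bigr)$, into the first square root, leaving the $\sqrt{\log h}$ term unchanged.

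I do not expect a genuine obstacle here: the only real content is spotting the min-max representation of order statistics. The two points that need a little care are the possible presence of ties in the order statistics — which is why the identity above is phrased via ``indices realising'' the sorted values rather than a literal sorting — and a sanity check of the boundary cases $h=1$ (where $m=1$, the matrix $X$ reduces to a column $(W_i)_{i\in[d]}$, and $\min\max X=\min_i W_i=W_{(1)}$) and $h=d$ (where $n=1$, $X=W$, and $\min\max X=\max_i W_i=W_{(d)}$); in both the relevant logarithmic factor vanishes and the stated bound remains valid.
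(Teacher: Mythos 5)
Your proposal is correct and matches the paper's own argument: both represent $W_{(h)}$ (and $Z_{(h)}$) as the min-max of the $\binom{d}{h}\times h$ matrix whose rows list the $h$-element subsets of the coordinates, then apply \Cref{thm:bound} with $n=\binom{d}{h}$, $m=h$, $k=1$ and finish with $\binom{d}{h}\le(ed/h)^h$. Your write-up merely spells out the min-max identity, the tie-handling, and the comparison of the $\gamma$ constants in more detail than the paper does.
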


\begin{proof}
	The key idea is that \(W_{(h)}\) (respectively, \(Z_{(h)}\)) is the min-max statistic (see Section \ref{ss:notation}) of a matrix \(X\) with \(\binom{d}{h}\) rows, where each row corresponds to a distinct subset of \(W\) (respectively, \(Z\)) with cardinality \(h\) (the order of the elements of the subset within a single row is immaterial). To see this, observe that the rows of the matrix $X$ described above are such that: (i) there exists at least one row containing $W_{(h)}$ as a maximal element, and (ii) every other row contains one element that is larger or equal to $W_{(h)}$.
	%
	Using now \Cref{thm:bound} with \(n=\binom{d}{h}\), \(m=h\) and \(k=1\) yields the first bound. The second bound follows easily from the first noting that \(\binom{d}{h}\le(\frac{ed}{h})^h\).
\end{proof}


\medskip

\noindent\textit{Remark.} An alternate class of local comparison theorems for (vectors of) order statistics of Gaussian matrices can be found in \cite{DHJL} -- see the discussion following Theorem \ref{thm:main} below for further details.

\medskip

\noindent\textit{Remark.} We now show that, when \(m\) is fixed, the bound of \Cref{thm:bound} is sharp in the order of \(n\) and \(k\). First, let \(Y\equiv 0\) and \(X\) be a matrix with \(m\) columns which are the copy of a same \(n\text{-dimensional}\) standard Gaussian vector \(X'\). Then \(\gamma=2\) for every \(n\) and 
\[
	\sum_{h=m-k+1}^m X_{i_1,(h)}=k X'_{i_1}.
\]
{It is known from extreme value theory that, as \(n\) diverges, the expectation of \(\min_{i_1} X'_{i_1}\) is of order \(\sqrt{\log n}\) (up to constants). Then, the expectation of \(\sum_{h=m-k+1}^m X_{i_1,(h)}\) is of order \(k\sqrt{\log n}\), matching the order of the bound \eqref{e:mainz} in this specific case.}

Analogously, when \(n\) is fixed, then the  bound is sharp in the order of \(m\) and \(k\) in the regime where \(k, m-k\ll m\). To see this, let \(Y\equiv 0\) and \(X\) be a matrix with \(n\) rows which are the copy of a same \(m\text{-dimensional}\) (transposed) Gaussian vector \( X'\). This time suppose without loss of generality that \(m\) is a multiple of \(k\), \(m=\tilde m k\), \(\tilde m\in \N\) and that \(X'\) is the collection of \(k\) copies of the same standard \(\tilde m\text{-dimensional}\) Gaussian vector \(\tilde X\).  
{Note that in this case
\[
\sum_{h=m-k+1}^m X_{i_1,(h)}=k  \max_{j\in[\tilde m]}\tilde X_{j}
\]
for all \(i_1\in[n]\). However, when \(\tilde m\) diverges, the expected value of \(\max_{j\in[\tilde m]}\tilde X_{j}\) is of order \(\sqrt{\log \tilde m}=\sqrt{\log (m/k)}\), and, accordingly, the expectation of \(k  \max_{j\in[\tilde m]}\tilde X_{j}\) is of order
\[
	k\sqrt{\log (m/k)}=\sqrt{k}\sqrt{\log (m/k)^k}\approx \sqrt{k}\sqrt{\log\binom{m}{k}},
\]
where the last approximation holds in the aforementioned regime of \(k\) with respect to \(m\). Again, we recover asymptotically the bound \eqref{e:mainz}.
}

\medskip

The next section contains six technical results that are pivotal in the proof of Theorem \ref{thm:bound}.

\subsection{Six ancillary lemmas}

\begin{lemma}
	\label{lem:unifbound}
	For \(\beta, \delta >0 \) and $k\in [m]$,  define the function \(	f_k^{\beta,\delta}\colon\R^{n\times m}\to\R\) by
	\[
	f_k^{\beta,\delta}(x)\coloneqq-\oneover{\beta\delta}\log\sum_{\ell_1=1}^n\Bigl(\sum\limits_{L\in\compset}\exp\Bigl(\beta\sum\limits_{\ell_2\in L}x_{\ell_1,\ell_2}\Bigr)\Bigr)^{-\delta}.
	\]
	For every \(x\in\R^{n\times m}\), one has that 
	\[
	f_k^{\beta,\delta}(x)-\oneover{\beta}\log\binom{m}{k}\le\Bigl(\sum_{h=m-k+1}^m x_{\cdot,(h)}\Bigr)_{(1)}\le	f_k^{\beta,\delta}(x)+\oneover{\beta\delta}\log n.
	\]
\end{lemma}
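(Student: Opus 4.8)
The plan is to sandwich $\bigl(\sum_{h=m-k+1}^m x_{\cdot,(h)}\bigr)_{(1)}=\min_{i_1\in\oneton}\sum_{h=m-k+1}^m x_{i_1,(h)}$ between $f_k^{\beta,\delta}(x)$ and $f_k^{\beta,\delta}(x)$ plus small error terms, by invoking twice the elementary \emph{log-sum-exp} bounds: for reals $a_1,\dots,a_N$ and $t>0$,
\[
\max_{j} a_j\le\tfrac1t\log\sum_{j=1}^N e^{t a_j}\le\max_j a_j+\tfrac1t\log N,\qquad \min_j a_j-\tfrac1t\log N\le-\tfrac1t\log\sum_{j=1}^N e^{-t a_j}\le\min_j a_j.
\]
The first application is at the level of the inner sums. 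Fixing a row index $i_1$ and setting $T_{i_1}\coloneqq\sum_{h=m-k+1}^m x_{i_1,(h)}$, one has the identity $T_{i_1}=\max_{L\in\compset}\sum_{\ell_2\in L}x_{i_1,\ell_2}$, because summing the $k$ largest coordinates of $x_{i_1,\cdot}$ is optimal among all selections of $k$ coordinates. Writing $\Sigma_{i_1}\coloneqq\sum_{L\in\compset}\exp\bigl(\beta\sum_{\ell_2\in L}x_{i_1,\ell_2}\bigr)$ and applying the soft-max bound with $t=\beta$ and $N=\binom{m}{k}$ (the number of $k$-subsets of $[m]$), we obtain, for every $i_1$,
\[
T_{i_1}\;\le\;\tfrac1\beta\log\Sigma_{i_1}\;\le\;T_{i_1}+\tfrac1\beta\log\binom{m}{k} .
\]

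The second application is at the level of the outer minimum. Since $f_k^{\beta,\delta}(x)=-\tfrac1{\beta\delta}\log\sum_{i_1=1}^n\Sigma_{i_1}^{-\delta}=-\tfrac1{\beta\delta}\log\sum_{i_1=1}^n e^{-\delta\log\Sigma_{i_1}}$, the soft-min bound with $t=\delta$, $N=n$ applied to the reals $b_{i_1}=\log\Sigma_{i_1}$ and then divided by $\beta$ yields
\[
\tfrac1\beta\min_{i_1}\log\Sigma_{i_1}-\tfrac1{\beta\delta}\log n\;\le\; f_k^{\beta,\delta}(x)\;\le\;\tfrac1\beta\min_{i_1}\log\Sigma_{i_1}.
\]
Minimizing over $i_1$ in the row-wise estimate of the previous step (pointwise inequalities pass to the minimum) gives $\min_{i_1}T_{i_1}\le\tfrac1\beta\min_{i_1}\log\Sigma_{i_1}\le\min_{i_1}T_{i_1}+\tfrac1\beta\log\binom{m}{k}$. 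Combining the \emph{upper} bound on $f_k^{\beta,\delta}(x)$ with the estimate $\tfrac1\beta\min_{i_1}\log\Sigma_{i_1}\le\min_{i_1}T_{i_1}+\tfrac1\beta\log\binom{m}{k}$ produces $f_k^{\beta,\delta}(x)-\tfrac1\beta\log\binom{m}{k}\le\min_{i_1}T_{i_1}$, while combining the \emph{lower} bound on $f_k^{\beta,\delta}(x)$ with $\min_{i_1}T_{i_1}\le\tfrac1\beta\min_{i_1}\log\Sigma_{i_1}$ produces $\min_{i_1}T_{i_1}\le f_k^{\beta,\delta}(x)+\tfrac1{\beta\delta}\log n$. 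As $\min_{i_1}T_{i_1}=\bigl(\sum_{h=m-k+1}^m x_{\cdot,(h)}\bigr)_{(1)}$, this is exactly the claimed two-sided bound.

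There is no real obstacle here: the proof amounts to two applications of a one-line inequality. The only things demanding care are bookkeeping the two inverse temperatures — $\beta$ for the inner soft-max over the $\binom{m}{k}$ subsets of each row, and $\delta$ (entering through the product $\beta\delta$) for the outer soft-min over the $n$ rows — and tracking which half of each two-sided estimate feeds into which half of the final sandwich, namely that the left inequality of the lemma follows from chaining the two \emph{upper} bounds while the right inequality follows from chaining the two \emph{lower} bounds.
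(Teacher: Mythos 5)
Your proof is correct and follows essentially the same route as the paper: the softmax bound with temperature $\beta$ applied row-wise over the $\binom{m}{k}$ subsets (using that the sum of the $k$ largest entries is the maximal subset-sum), followed by the softmin bound with temperature $\beta\delta$ over the $n$ rows. The only cosmetic difference is that you chain the error terms through the hard minimum, whereas the paper passes them through the soft-min via its monotonicity; the two manipulations are equivalent.
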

\begin{proof}
	Let \(d\in\N\) and \(z\in\R^d\). When \(\beta>0 \) the following inequality holds
	\[
		\oneover{d}\sum_{i=1}^d e^{\beta z_i}\le e^{\beta z_{(d)}}\le \sum_{i=1}^d e^{\beta z_i},
	\]
	in particular 
	\begin{equation}
	\label{eq:max}
		\oneover{\beta}\log\sum_{i=1}^d e^{\beta z_i}-\frac{\log d}{\beta}\le z_{(d)}\le	\oneover{\beta}\log \sum_{i=1}^d e^{\beta z_i}.
	\end{equation}
	Similarly for the minimum instead, it holds that, for every \(\beta'>0\),
	\begin{equation}
	\label{eq:min}
	-\oneover{\beta'}\log\sum_{i=1}^d e^{-\beta' z_i}\le z_{(1)}\le	-\oneover{\beta'}\log \sum_{i=1}^d e^{-\beta' z_i}+\frac{\log d}{\beta'}.
	\end{equation}
	For each \(\ell_1\in\{1,\ldots, n\}\), consider the vector \(z=\bigl(\sum_{\ell_2\in L} x_{\ell_1,\ell_2}\bigr)_{L\in\compset}\in\R^{\binom{m}{k}}\)
	and apply \eqref{eq:max} to it. Notice that  \(z_{(\binom{m}{k})}=\sum\limits_{h=m-k+1}^m x_{\ell_1,(h)}\). We get
	\[
\oneover{\beta}\log \sum_{L\in\compset} \exp\Bigl(\beta\sum\limits_{\ell_2\in L}x_{\ell_1,\ell_2}\Bigr)-\oneover{\beta}\log\binom{m}{k}	\le\sum\limits_{h=m-k+1}^m x_{\ell_1,(h)}\le \oneover{\beta}\log \sum_{L\in\compset} \exp\Bigl(\beta\sum\limits_{\ell_2\in L}x_{\ell_1,\ell_2}\Bigr).
	\]
Now we want to isolate the minimum of the vector \(	\Bigl(\sum\limits_{h=m-k+1}^m x_{\cdot,(h)}\Bigr)\in\R^n\), and we use \eqref{eq:min} to do so.
Notice that, for \(\beta'=\beta\delta\), we get
\[
-\oneover{\beta'}\log\sum\limits_{i=1}^n \exp\Bigl(-\beta'\cdot  \oneover{\beta}\log \sum_{L\in\compset} \exp\Bigl(\beta\sum\limits_{\ell_2\in L}x_{\ell_1,\ell_2}\Bigr)\Bigr)=	f_k^{\beta,\delta}(x)\]
 and that
 \[
\begin{split}
-\oneover{\beta'}\log&\sum\limits_{i=1}^n \exp\Bigl(-\beta'\cdot  \Bigl(\oneover{\beta}\log \sum_{L\in\compset} \exp\Bigl(\beta\sum\limits_{\ell_2\in L}x_{\ell_1,\ell_2}\Bigr)-\oneover{\beta}\log\binom{m}{k}\Bigr)\Bigr)\\
&
=-\oneover{\beta'}\log\Bigr[\binom{m}{k}^\delta\sum\limits_{i=1}^n \exp\Bigl(-\beta\cdot  \Bigl(\oneover{\beta}\log \sum_{L\in\compset} \exp\Bigl(\beta\sum\limits_{\ell_2\in L}x_{\ell_1,\ell_2}\Bigr)\Bigr)\Bigr)\Bigr]=	f_k^{\beta,\delta}(x)-\oneover{\beta}\log\binom{m}{k},
\end{split}
\]
which concludes the proof by monotonicity.
\end{proof}

For \(h\in\{0,\ldots,m\}\), let \(A\subseteq\onetom\) with \(\card{A}=h \). If \(h>0\) we write \(A=\set{a_1,\ldots,a_h}\) with \(a_1<\ldots <a_h\).
It is convenient at this point to define the functions, for any \(i_1\in\oneton\), \(p_{i_1}^A,q_{i_1}\colon\R^n\to[0,+\infty)\) by
\begin{align*}
p_{i_1}^A(x)=p^{a_1,\ldots,a_h}_{i_1}(x)&\coloneqq\dfrac{\sum\limits_{L\in\compset^{a_1\cdots a_h}}\!\!\!\!\!\exp\bigl(\beta\sum\limits_{\ell\in L}x_{i_1,\ell}\bigr)}{\sum\limits_{L\in\compset}\!\!\exp\bigl(\beta\sum\limits_{\ell\in L}x_{i_1,\ell}\bigr)},\\
q_{i_1}(x)&\coloneqq\sum_{\ell_1=1}^n\Biggl(\frac{\sum\limits_{L\in\compset}\!\!\exp\bigl(\beta\sum\limits_{\ell_2\in L}x_{i_1,\ell_2}\bigr)}{\sum\limits_{L\in\compset}\!\!\exp\bigl(\beta\sum\limits_{\ell_2\in L}x_{\ell_1,\ell_2}\bigr)}\Biggr)^{\delta}.
\end{align*}
where we use the notational conventions \(p_{i_1}^\emptyset\equiv 1\) and \(\sum_{L\in\emptyset}=0\).
\begin{lemma}
		\label{lem:sums}
Let \(b_1<\ldots< b_h\) and \(B=\set{a_{b_1},\ldots,a_{b_h}}\subseteq A\). For every \(x\in\R^n \), one has that
\[
\sum_{a_{b_1},\ldots,a_{b_h}=1}^m p^A_{i_1}(x)=k^h p^{A\setminus B}_{i_1}(x).
\]
and
\[
\sum_{i_1=1}^n \oneover{q_{i_1}(x)}=1.
\]
In particular, for \(A=B\) we infer that
\[\sum_{a_1,\ldots,a_h=1}^m p^{a_1,\ldots,a_h}_{i_1}(x)=k^h.\]
Notice that since \(p_{i_1}^{a_1,a_1}=p_{i_1}^{a_1}\), this implies also that
\[
\sum_{\substack{a_2=1\\a_1\neq a_2}}^m p^{a_1,a_2}_{i_1}(x)=(k-1)p_{i_1}^{a_1}.
\]
\end{lemma}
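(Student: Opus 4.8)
The plan is to prove the two identities of Lemma \ref{lem:sums} separately, in each case exploiting the combinatorial fact that the index sets $\compset^{a_1\cdots a_h}$ behave well under summation over the ``constrained'' indices. I will treat the normalization identity $\sum_{i_1} 1/q_{i_1}(x) = 1$ first, since it is the quicker of the two: writing $S_{\ell_1} \coloneqq \sum_{L\in\compset}\exp(\beta\sum_{\ell_2\in L}x_{\ell_1,\ell_2})$, one has $q_{i_1}(x) = \sum_{\ell_1=1}^n (S_{i_1}/S_{\ell_1})^\delta = S_{i_1}^\delta \sum_{\ell_1=1}^n S_{\ell_1}^{-\delta}$, so $1/q_{i_1}(x) = S_{i_1}^{-\delta}/\sum_{\ell_1}S_{\ell_1}^{-\delta}$, and summing over $i_1$ gives $1$ immediately.

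For the first identity, I would argue by induction on $h$, or equivalently reduce to the single-index case and iterate. The heart of the matter is the following claim: for a fixed index $a$ (possibly already constrained to lie in or out of $L$ via the decorations of $A$), summing $p_{i_1}^{A}$ over the value of that one index $a\in[m]$ produces a factor of $k$ times $p_{i_1}^{A\setminus\{a\}}$. To see this, fix the numerator/denominator structure: the denominator of $p_{i_1}^A$ does not depend on $a$, so summing over $a$ only affects the numerator $\sum_{L\in\compset^{a_1\cdots a_h}}\exp(\beta\sum_{\ell\in L}x_{i_1,\ell})$. Now I would swap the order of summation: $\sum_{a=1}^m \sum_{L\in\compset^{\cdots a\cdots}} (\cdots) = \sum_{L} \#\{a : L\in\compset^{\cdots a\cdots}\}\,(\cdots)$, where the sum on the right ranges over $L\in\compset^{A\setminus\{a\}}$ (the constraints on the remaining indices of $A$), and for each such $L$ the number of valid choices of $a$ is exactly $|L\cap[m]|=k$ (since $a$ must be an element of $L$, and $L$ has $k$ elements). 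Iterating this $h$ times — being slightly careful about the case $h=1$ versus larger $h$, and about the stated convention $p_{i_1}^\emptyset\equiv 1$ — yields the factor $k^h$ and the set $A\setminus B$.

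The stated corollaries then follow formally: taking $A=B$ gives $\sum_{a_1,\ldots,a_h=1}^m p_{i_1}^{a_1,\ldots,a_h} = k^h p_{i_1}^\emptyset = k^h$; and for the last displayed identity I would use the observation $p_{i_1}^{a_1,a_1}=p_{i_1}^{a_1}$ (constraining $a_1\in L$ twice is the same as constraining it once) to write $\sum_{a_2=1}^m p_{i_1}^{a_1,a_2} = k\, p_{i_1}^{a_1}$ by the single-index case, and then subtract the diagonal term $a_2=a_1$, which contributes $p_{i_1}^{a_1,a_1}=p_{i_1}^{a_1}$, leaving $\sum_{a_2\neq a_1} p_{i_1}^{a_1,a_2} = (k-1)p_{i_1}^{a_1}$.

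The main obstacle — really the only subtle point — is bookkeeping the decorations correctly when $B$ is a proper subset of $A$: one must verify that summing over the indices in $B$ while the indices in $A\setminus B$ remain constrained genuinely reproduces $\compset^{A\setminus B}$ with the right multiplicity, and that no double-counting occurs because $a_1<\cdots<a_h$ forces the $a_{b_j}$ to be distinct (so the count $k$ at each step is exact, not an overcount of ordered tuples). I would handle this by first proving the fully unconstrained case $A=B$ cleanly by the swap-of-summation argument above, and then noting that the general case is identical with the denominator and the ``frozen'' part of the numerator carried along inertly throughout.
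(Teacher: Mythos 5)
Your proof is correct and follows essentially the same route as the paper: the first identity via swapping the sum over the constrained index with the sum over $L\in\compset$ and counting $\sum_{a=1}^m\indic{\{a\in L\}}=k$ for each $L$, iterated over the indices of $B$, and the normalization $\sum_{i_1}1/q_{i_1}=1$ read off directly from the definition of $q_{i_1}$. The only quibble is your side remark that distinctness of the $a_{b_j}$ is needed to avoid overcounting --- it is not, since the displayed sums range over all tuples in $[m]^h$ (repeats included) and the per-step count of $k$ is exact regardless, which is precisely why the final identity requires subtracting the diagonal term as you do.
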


\begin{proof}
	We only prove the first equation for \(h=1\), the general case is then similarly proven by iteration. One has that
	\[
			\begin{split}
			\sum_{a=1}^m \sum\limits_{L\in\compset^a} \exp\Bigl(\beta \sum_{\ell\in L} x_{i_1,\ell} \Bigr)&=	\sum_{a=1}^m \sum\limits_{L\in\compset} \indic{\{a\in L\}} \exp\Bigl(\beta \sum_{\ell\in L} x_{i_1,\ell} \Bigr) \\
			&=\sum\limits_{L\in\compset} \exp\Bigl(\beta \sum_{\ell\in L} x_{i_1,\ell} \Bigr)\sum_{a=1}^m\indic{\{a\in L\}}\\
			&=k\sum\limits_{L\in\compset} \exp\Bigl(\beta \sum_{\ell\in L} x_{i_1,\ell} \Bigr),
			\end{split}
	\]
	that gives the claim. The second equation is a straightforward consequence of the definition of \(q_{i_1}\).
\end{proof}

\begin{lemma}
	\label{lem:0sum}
		For all \((i_1,i_2)\in\oneton\times\onetom \),
	\begin{equation}
	\label{eq:sum0}
	\sum_{(j_1,j_2)\in\oneton\times\onetom}\frac{\partial^2 f_k^{\beta,\delta}}{\partial x_{i_1,i_2}\partial x_{j_1,j_2}}\equiv 0.
	\end{equation}
\end{lemma}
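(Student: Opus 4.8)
The plan is to exploit the fact that $f_k^{\beta,\delta}$ is a function of the matrix $x\in\R^{n\times m}$ that depends on its argument only through the \emph{differences} of the entries — in other words, $f_k^{\beta,\delta}$ is invariant under the translation $x\mapsto x + c\,\mathbf{1}$, where $\mathbf{1}$ is the all-ones matrix and $c\in\R$. Indeed, in the defining formula
\[
f_k^{\beta,\delta}(x)=-\oneover{\beta\delta}\log\sum_{\ell_1=1}^n\Bigl(\sum_{L\in\compset}\exp\Bigl(\beta\sum_{\ell_2\in L}x_{\ell_1,\ell_2}\Bigr)\Bigr)^{-\delta},
\]
adding a common constant $c$ to every entry $x_{\ell_1,\ell_2}$ multiplies each inner sum $\sum_{L\in\compset}\exp(\beta\sum_{\ell_2\in L}x_{\ell_1,\ell_2})$ by $e^{\beta k c}$ (since every $L\in\compset$ has $\card L=k$), hence multiplies each term $(\cdots)^{-\delta}$ by $e^{-\beta\delta k c}$, and the outer $-\frac{1}{\beta\delta}\log(\cdots)$ turns this into an additive shift by $kc$. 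So $f_k^{\beta,\delta}(x+c\mathbf 1)=f_k^{\beta,\delta}(x)+kc$.

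First I would make this invariance precise by differentiating the identity $f_k^{\beta,\delta}(x+c\mathbf 1)=f_k^{\beta,\delta}(x)+kc$ with respect to $c$ at $c=0$, which gives the first-order relation $\sum_{(j_1,j_2)}\partial_{x_{j_1,j_2}}f_k^{\beta,\delta}(x)=k$ for all $x$. Then I would differentiate once more: applying $\partial_{x_{i_1,i_2}}$ to both sides of this last identity (which holds identically in $x$) yields
\[
\sum_{(j_1,j_2)\in\oneton\times\onetom}\frac{\partial^2 f_k^{\beta,\delta}}{\partial x_{i_1,i_2}\partial x_{j_1,j_2}}(x)=\frac{\partial}{\partial x_{i_1,i_2}}\,k=0,
\]
which is exactly \eqref{eq:sum0}. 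Equivalently, and perhaps cleaner to write down, one can differentiate the translation identity twice directly in $c$: since $\frac{d^2}{dc^2}\big[f_k^{\beta,\delta}(x+c\mathbf 1)\big]=\sum_{i_1,i_2}\sum_{j_1,j_2}\partial_{x_{i_1,i_2}}\partial_{x_{j_1,j_2}}f_k^{\beta,\delta}(x+c\mathbf 1)$ while $\frac{d^2}{dc^2}\big[f_k^{\beta,\delta}(x)+kc\big]=0$, evaluating at $c=0$ gives that the \emph{total} double sum vanishes; to get the stronger per-index statement \eqref{eq:sum0} I would instead use the direction-by-direction argument, i.e. differentiate the first-order identity $\sum_{j_1,j_2}\partial_{x_{j_1,j_2}}f_k^{\beta,\delta}\equiv k$ in the single variable $x_{i_1,i_2}$.

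The only real work is justifying the interchange of differentiation and the (finite) sums and confirming that $f_k^{\beta,\delta}$ is smooth, which is immediate since it is a composition of $\log$, $\exp$, and finite sums with strictly positive arguments. The ``main obstacle'' is therefore merely bookkeeping: one should verify that the first-order identity $\sum_{j_1,j_2}\partial_{x_{j_1,j_2}}f_k^{\beta,\delta}(x)=k$ indeed holds for \emph{every} $x\in\R^{n\times m}$ (not just at a point), so that it may be legitimately differentiated again; this follows because the translation relation $f_k^{\beta,\delta}(x+c\mathbf 1)=f_k^{\beta,\delta}(x)+kc$ holds for all $x$ and all $c$. Alternatively, one can short-circuit the argument entirely by computing $\partial_{x_{j_1,j_2}}f_k^{\beta,\delta}$ explicitly in terms of the weights $p^{j_2}_{j_1}$ and $q_{j_1}$ introduced before Lemma \ref{lem:sums}, using $\sum_{j_2}p^{j_2}_{j_1}(x)=k$ (the $h=1$, $A=B$ case of Lemma \ref{lem:sums}) and $\sum_{j_1}q_{j_1}(x)^{-1}=1$, to see that $\sum_{j_1,j_2}\partial_{x_{j_1,j_2}}f_k^{\beta,\delta}(x)=k\sum_{j_1}q_{j_1}(x)^{-1}=k$, and then differentiate; I would likely present the translation-invariance argument as the conceptually cleanest route and relegate the explicit computation to a remark.
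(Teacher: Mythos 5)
Your proof is correct, and it takes a genuinely different route from the paper. The paper proceeds by brute force: it computes the first derivative $\partial f_k^{\beta,\delta}/\partial x_{i_1,i_2}=p_{i_1}^{i_2}/q_{i_1}$ and the full Hessian formula \eqref{eq:secondpartial}, and then sums the three resulting terms over $(j_1,j_2)$ using Lemma \ref{lem:sums} (namely $\sum_{j_2}p_{j_1}^{j_2}=k$, $\sum_{j_1}q_{j_1}^{-1}=1$, and $\sum_{j_2}p_{i_1}^{i_2,j_2}=k\,p_{i_1}^{i_2}$), checking that $\delta k p_{i_1}^{i_2}-(1+\delta)kp_{i_1}^{i_2}+kp_{i_1}^{i_2}=0$. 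Your argument instead derives everything from the equivariance $f_k^{\beta,\delta}(x+c\mathbf 1)=f_k^{\beta,\delta}(x)+kc$ (which you verify correctly, using $\card L=k$ for every $L\in\compset$): differentiating in $c$ gives the identity $\sum_{(j_1,j_2)}\partial_{x_{j_1,j_2}}f_k^{\beta,\delta}\equiv k$ valid for all $x$, and differentiating that identity in $x_{i_1,i_2}$ (legitimate, since $f_k^{\beta,\delta}$ is $C^\infty$ as a composition of $\exp$, finite sums with positive terms, powers and $\log$) yields \eqref{eq:sum0}; you correctly note that the direction-by-direction argument, not the second $c$-derivative, is what gives the per-index statement. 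Your route is conceptually cleaner and more robust — it shows the lemma holds for any smooth shift-equivariant functional, with no need to know the Hessian — while the paper's computation is not wasted effort in context: the explicit formula \eqref{eq:secondpartial} and the quantities $p_{i_1}^{i_2}$, $q_{i_1}$ are reused heavily in Lemmas \ref{lem:signs}, \ref{lem:absbound} and \ref{lem:derivativebound}, so for the paper the zero-sum identity comes essentially for free once those are in place; your closing remark (first-derivative formula plus Lemma \ref{lem:sums}) is precisely that intermediate route.
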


\begin{proof}
	A direct computation shows that
		\begin{equation}
	\label{eq:firstpartial}
	\frac{\partial f_k^{\beta,\delta}}{\partial x_{i_1,i_2}}=\frac{p_{i_1}^{i_2}}{q_{i_1}}
	\end{equation}
	and that
		\begin{equation}
	\label{eq:secondpartial}
	\frac{\partial^2 f_k^{\beta,\delta}}{\partial x_{i_1,i_2}\partial x_{j_1,j_2}}
	=
	\beta\Bigl[\delta\frac{p_{i_1}^{i_2}p_{j_1}^{j_2}}{q_{i_1}q_{j_1}}+\frac{\delta_{i_1,j_1}}{q_{i_1}}\bigl(-(1+\delta)p_{i_1}^{i_2}p_{j_1}^{j_2}+ p_{i_1}^{i_2,j_2}\bigr)\Bigr].
	\end{equation}
	We will evaluate the contribution of the three summands of \eqref{eq:secondpartial} separately and show that they balance out to 0. Notice that \(\beta/q_{i_1}\) is a common multiplicative factor to all three so we can ignore it. We use both properties stated in \Cref{lem:sums} to deduce that
	\[
	\sum_{(j_1,j_2)\in[n]\times[m]}^n\delta\,\frac{p_{i_1}^{i_2}p_{j_1}^{j_2}}{q_{j_1}}=\delta k p_{i_1}^{i_2},
	\]
	\[
	\sum_{(j_1,j_2)\in[n]\times[m]}^n-(1+\delta)p_{i_1}^{i_2}p_{i_1}^{j_2}=-(1+\delta)k p_{i_1}^{i_2},
	\]
	\[
	\sum_{(j_1,j_2)\in[n]\times[m]}^n p_{i_1}^{i_2,j_2}=k p_{i_1}^{i_2  },
	\]
which concludes the proof.
\end{proof}

	\begin{lemma}
		\label{lem:Gordontrick}
		Under the above notation and assumptions, one has that
	\begin{equation*}
	\begin{split}
			\sum_{\substack{(i_1,i_2)\in[n]\times[m]\\(j_1,j_2)\in[n]\times[m]}}	&\frac{\partial^2 f^{\beta,\delta}_k}{\partial x_{i_1,i_2}\partial x_{j_1,j_2}}(\sigma^Y_{i_1,i_2;j_1,j_2}-\sigma^X_{i_1,i_2;j_1,j_2})\\
			=
			&-\oneover{2}	\sum_{\substack{(i_1,i_2)\in[n]\times[m]\\(j_1,j_2)\in[n]\times[m]\\(i_1,i_2)\neq (j_1,j_2)}}		\frac{\partial^2 f^{\beta,\delta}_k}{\partial x_{i_1,i_2}\partial x_{j_1,j_2}}(\gamma^Y_{i_1,i_2;j_1,j_2}-\gamma^X_{i_1,i_2;j_1,j_2}).
	\end{split}
	\end{equation*}
\end{lemma}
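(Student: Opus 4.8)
The plan is to reduce the stated identity to a purely combinatorial bookkeeping exercise, exploiting the definition of the quantities $\sigma$ and $\gamma$ together with the ``zero-sum'' relation of Lemma~\ref{lem:0sum}. First I would recall that, by definition of the increment variances, one has the polarization-type identity $\gamma^X_{i_1,i_2;j_1,j_2} = \sigma^X_{i_1,i_2;i_1,i_2} + \sigma^X_{j_1,j_2;j_1,j_2} - 2\sigma^X_{i_1,i_2;j_1,j_2}$, and similarly for $Y$; hence $\gamma^Y_{i_1,i_2;j_1,j_2} - \gamma^X_{i_1,i_2;j_1,j_2}$ equals $(\sigma^Y-\sigma^X)$ evaluated at the two diagonal pairs minus twice $(\sigma^Y-\sigma^X)_{i_1,i_2;j_1,j_2}$. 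Substituting this into the right-hand side of the claimed identity splits the double sum into a ``diagonal'' contribution and an ``off-diagonal'' contribution.

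Next I would handle the two contributions. The factor of $-1/2$ together with the factor $-2$ in front of the off-diagonal term combines to give exactly $\sum \frac{\partial^2 f}{\partial x_{i_1,i_2}\partial x_{j_1,j_2}}(\sigma^Y_{i_1,i_2;j_1,j_2}-\sigma^X_{i_1,i_2;j_1,j_2})$ over all pairs with $(i_1,i_2)\neq(j_1,j_2)$; adding back the $(i_1,i_2)=(j_1,j_2)$ terms (which I can do freely, since the corresponding $\gamma$-values vanish and so these terms were absent on the right-hand side) turns this into the full double sum appearing on the left-hand side of the lemma. It therefore remains to show that the ``diagonal'' contribution vanishes. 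That contribution is, up to the $-1/2$, a sum over all pairs $(i_1,i_2)\neq(j_1,j_2)$ of $\frac{\partial^2 f}{\partial x_{i_1,i_2}\partial x_{j_1,j_2}}$ times $\big[(\sigma^Y-\sigma^X)_{i_1,i_2;i_1,i_2} + (\sigma^Y-\sigma^X)_{j_1,j_2;j_1,j_2}\big]$. By symmetry of the Hessian in its two index pairs, the two terms in the bracket give equal contributions, so it suffices to show that for each fixed $(i_1,i_2)$ the inner sum $\sum_{(j_1,j_2)\neq (i_1,i_2)} \frac{\partial^2 f}{\partial x_{i_1,i_2}\partial x_{j_1,j_2}}$ vanishes --- but by Lemma~\ref{lem:0sum} the sum over \emph{all} $(j_1,j_2)$ (including the diagonal term) is zero, and one checks that including or excluding the single diagonal term is immaterial here precisely because it is multiplied by the diagonal value of $\sigma^Y-\sigma^X$ which is the same constant factored out of the whole inner sum. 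Thus the diagonal contribution is zero and the identity follows.

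The only genuinely delicate point is the careful handling of the diagonal terms $(i_1,i_2)=(j_1,j_2)$: one must keep track of the fact that $\gamma^X_{i_1,i_2;i_1,i_2}=\gamma^Y_{i_1,i_2;i_1,i_2}=0$, so these pairs contribute nothing to the right-hand side as written, while on the left-hand side the diagonal terms $\frac{\partial^2 f}{\partial x_{i_1,i_2}^2}(\sigma^Y_{i_1,i_2;i_1,i_2}-\sigma^X_{i_1,i_2;i_1,i_2})$ are generally nonzero and must be produced by the bookkeeping --- they come for free once Lemma~\ref{lem:0sum} is invoked to absorb them. In other words, the whole lemma is essentially the statement that the Hessian of $f_k^{\beta,\delta}$ annihilates constants (row-wise and column-wise in the flattened index), which is exactly \eqref{eq:sum0}; the polarization identity is just the device that converts a bilinear form in $\sigma$ into one in $\gamma$, as in Gordon's and Chatterjee's arguments. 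I expect no real obstacle beyond being scrupulous with the index manipulations.
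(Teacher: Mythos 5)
Your overall route is the same as the paper's --- polarize $\gamma$ into $\sigma$ and then invoke \Cref{lem:0sum} --- and your closing paragraph identifies the right mechanism, but the bookkeeping in your middle paragraph rests on two claims that are each literally false and happen to cancel. First, you cannot ``add back the $(i_1,i_2)=(j_1,j_2)$ terms \ldots\ freely'': the vanishing of $\gamma^X_{i_1,i_2;i_1,i_2}=\gamma^Y_{i_1,i_2;i_1,i_2}=0$ explains why those pairs are absent from the right-hand side, but the diagonal quantities $\frac{\partial^2 f^{\beta,\delta}_k}{\partial x_{i_1,i_2}\partial x_{i_1,i_2}}\bigl(\sigma^Y_{i_1,i_2;i_1,i_2}-\sigma^X_{i_1,i_2;i_1,i_2}\bigr)$ are in general nonzero (the variances of $X$ and $Y$ are not assumed to agree), so inserting them changes the value of the sum and must be compensated. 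Second, the inner sum $\sum_{(j_1,j_2)\neq(i_1,i_2)}\frac{\partial^2 f^{\beta,\delta}_k}{\partial x_{i_1,i_2}\partial x_{j_1,j_2}}$ does \emph{not} vanish: by \Cref{lem:0sum} it equals $-\frac{\partial^2 f^{\beta,\delta}_k}{\partial x_{i_1,i_2}\partial x_{i_1,i_2}}$, which is nonzero in general, and the parenthetical argument that ``including or excluding the single diagonal term is immaterial'' is exactly where this breaks down.

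The repair is one line, and it is the paper's computation read in the opposite direction: after substituting the polarization identity into the right-hand side, the ``diagonal contribution'' should not be shown to vanish, but to reproduce precisely the missing diagonal terms of the left-hand side. Writing $\tau_{i_1,i_2}\coloneqq\sigma^Y_{i_1,i_2;i_1,i_2}-\sigma^X_{i_1,i_2;i_1,i_2}$, the symmetry of the Hessian and \Cref{lem:0sum} give
\begin{equation*}
\begin{split}
-\oneover{2}\sum_{\substack{(i_1,i_2),(j_1,j_2)\\(i_1,i_2)\neq(j_1,j_2)}}\frac{\partial^2 f^{\beta,\delta}_k}{\partial x_{i_1,i_2}\partial x_{j_1,j_2}}\bigl[\tau_{i_1,i_2}+\tau_{j_1,j_2}\bigr]
&=-\sum_{(i_1,i_2)}\tau_{i_1,i_2}\sum_{(j_1,j_2)\neq(i_1,i_2)}\frac{\partial^2 f^{\beta,\delta}_k}{\partial x_{i_1,i_2}\partial x_{j_1,j_2}}\\
&=\sum_{(i_1,i_2)}\frac{\partial^2 f^{\beta,\delta}_k}{\partial x_{i_1,i_2}\partial x_{i_1,i_2}}\,\tau_{i_1,i_2},
\end{split}
\end{equation*}
which, added to the off-diagonal term $\sum_{(i_1,i_2)\neq(j_1,j_2)}\frac{\partial^2 f^{\beta,\delta}_k}{\partial x_{i_1,i_2}\partial x_{j_1,j_2}}(\sigma^Y_{i_1,i_2;j_1,j_2}-\sigma^X_{i_1,i_2;j_1,j_2})$ that you obtained from the factor $-\oneover{2}\cdot(-2)$, yields the full double sum on the left-hand side of the lemma. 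With this correction your argument is complete and coincides with the paper's proof.
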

	\begin{proof}
		First, note that \(\gamma^X_{i_1,i_2;j_1,j_2}=\sigma^X_{i_1,i_2;i_1,i_2}-2\sigma^X_{j_1,j_2;j_1,j_2}+\sigma^X_{i_1,i_2;i_1,i_2}\), analogously for \(Y\). We can split the first sum of the statement as 
		\[
			\begin{split}
			\sum_{\substack{(i_1,i_2)\in[n]\times[m]\\(j_1,j_2)\in[n]\times[m]}}&\frac{\partial^2 f^{\beta,\delta}_k}{\partial x_{i_1,i_2}\partial x_{j_1,j_2}}(\sigma^Y_{i_1,i_2;j_1,j_2}-\sigma^X_{i_1,i_2;j_1,j_2})\\
			&=\oneover{2}\sum_{(i_1,i_2)\in[n]\times[m]}\frac{\partial^2 f_k^{\beta,\delta}}{\partial x_{i_1,i_2}\partial x_{i_1,i_2}}\bigl(\sigma^Y_{i_1,i_2;i_1,i_2}-\sigma^X_{i_1,i_2;i_1,i_2}\bigr)\\
			&-\oneover{2}\sum_{\substack{(i_1,i_2)\in[n]\times[m]\\(j_1,j_2)\in[n]\times[m]\\(i_1,i_2)\neq (j_1,j_2)}}		\frac{\partial^2 f^{\beta,\delta}_k}{\partial x_{i_1,i_2}\partial x_{j_1,j_2}}(-2\sigma^Y_{i_1,i_2;j_1,j_2}+2\sigma^X_{i_1,i_2;j_1,j_2})\\
			&+\oneover{2}\sum_{(j_1,j_2)\in[n]\times[m]}\frac{\partial^2 f_k^{\beta,\delta}}{\partial x_{j_1,j_2}\partial x_{j_1,j_2}}\bigl(\sigma^Y_{j_1,j_2;j_1,j_2}-\sigma^X_{j_1,j_2;j_1,j_2}\bigr).
			\end{split}
		\]
		By \Cref{lem:0sum} we know that 
			for all \((h_1,h_2)\in\oneton\times\onetom \),
		\[
			\frac{\partial^2 f_k^{\beta,\delta}}{\partial x_{h_1,h_2}\partial x_{h_1,h_2}}=
			-\sum_{\substack{(j_1,j_2)\in\oneton\times\onetom\\(j_1,j_2)\neq(h_1,h_2)}}\frac{\partial^2 f_k^{\beta,\delta}}{\partial x_{i_1,i_2}\partial x_{j_1,j_2}},
		\]
		which allows us to conclude.
	\end{proof}

\begin{lemma}
	\label{lem:signs}
	For all \(x\in\R^{n\times m}\),
		\[
\frac{\partial^2 f^{\beta,\delta}_k}{\partial x_{i_1,i_2}\partial x_{j_1,j_2}}(x)\begin{cases}
\ge 0&\quad\text{ if } i_1\neq j_1\\
\le 0&\quad\text{ if } i_1=j_1  \text{ and } i_2\neq j_2.
\end{cases}
\]
\end{lemma}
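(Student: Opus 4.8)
The plan is to work directly from the explicit formula \eqref{eq:secondpartial} for the mixed second partial, namely
\[
\frac{\partial^2 f_k^{\beta,\delta}}{\partial x_{i_1,i_2}\partial x_{j_1,j_2}}
=
\beta\Bigl[\delta\frac{p_{i_1}^{i_2}p_{j_1}^{j_2}}{q_{i_1}q_{j_1}}+\frac{\delta_{i_1,j_1}}{q_{i_1}}\bigl(-(1+\delta)p_{i_1}^{i_2}p_{j_1}^{j_2}+ p_{i_1}^{i_2,j_2}\bigr)\Bigr],
\]
and to split into the two cases. Since $\beta>0$ and the quantities $p_{i_1}^{i_2}$, $q_{i_1}$ are nonnegative by construction (they are ratios of sums of exponentials), the first case is immediate: when $i_1\neq j_1$ the Kronecker delta $\delta_{i_1,j_1}$ vanishes, leaving only the term $\beta\delta\, p_{i_1}^{i_2}p_{j_1}^{j_2}/(q_{i_1}q_{j_1})\ge 0$.

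For the second case, $i_1=j_1=:\ell$ and $i_2\neq j_2$, the expression reduces (after factoring out the positive constant $\beta/q_{\ell}$) to
\[
\delta\,\frac{p_{\ell}^{i_2}p_{\ell}^{j_2}}{q_{\ell}} - (1+\delta)\,p_{\ell}^{i_2}p_{\ell}^{j_2} + p_{\ell}^{i_2,j_2}
\le -p_{\ell}^{i_2}p_{\ell}^{j_2} + p_{\ell}^{i_2,j_2},
\]
where I used $\delta\,p_{\ell}^{i_2}p_{\ell}^{j_2}/q_{\ell}\le \delta\,p_{\ell}^{i_2}p_{\ell}^{j_2}$, valid because $q_{\ell}\ge 1$ (indeed one of the summands defining $q_{\ell}$ equals $1$, the one with $\ell_1=\ell$). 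So it suffices to prove the pointwise inequality $p_{\ell}^{i_2,j_2}\le p_{\ell}^{i_2}p_{\ell}^{j_2}$ for $i_2\neq j_2$. Writing $S:=\sum_{L\in\compset}\exp(\beta\sum_{\ell_2\in L}x_{\ell,\ell_2})$ for the common denominator, this is equivalent to
\[
S\cdot\!\!\sum_{L\in\compset^{i_2 j_2}}\!\!e^{\beta\sum_{\ell_2\in L}x_{\ell,\ell_2}}
\ \le\
\Bigl(\sum_{L\in\compset^{i_2}}\!\!e^{\beta\sum_{\ell_2\in L}x_{\ell,\ell_2}}\Bigr)\Bigl(\sum_{L\in\compset^{j_2}}\!\!e^{\beta\sum_{\ell_2\in L}x_{\ell,\ell_2}}\Bigr),
\]
which I would prove by an injection/counting argument: to each pair $(L,M)$ with $L\in\compset$, $M\in\compset^{i_2 j_2}$ associate the pair $(L',M')$ with $L':=(L\setminus\{j_2\})\cup\{i_2\}$ if $j_2\in L, i_2\notin L$ (and $L':=L$, suitably adjusting, in the remaining cases) so that $L'\in\compset^{i_2}$, $M'\in\compset^{j_2}$, and $\sum_{\ell_2\in L'}x_{\ell,\ell_2}+\sum_{\ell_2\in M'}x_{\ell,\ell_2}=\sum_{\ell_2\in L}x_{\ell,\ell_2}+\sum_{\ell_2\in M}x_{\ell,\ell_2}$, while checking the map is injective. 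Equivalently, and perhaps more cleanly, one recognizes $p_\ell^{i_2}$ as the probability that a random $k$-subset (drawn from the Gibbs measure on $\compset$ with weights $e^{\beta\sum_{\ell_2\in L}x_{\ell,\ell_2}}$) contains $i_2$, and the inequality $\Prob(i_2,j_2\in L)\le\Prob(i_2\in L)\Prob(j_2\in L)$ is a negative-association statement for this measure; since the weights factorize over coordinates, this is a standard consequence of the negative correlation of indicator variables under sampling without replacement with product weights.

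The main obstacle is precisely establishing that correlation inequality $p_{\ell}^{i_2,j_2}\le p_{\ell}^{i_2}p_{\ell}^{j_2}$; everything else is bookkeeping with signs and the elementary bounds $q_\ell\ge 1$, $p_\ell^{\cdot}\ge 0$. I expect the cleanest route is the explicit injective map between index sets described above, carried out carefully case by case according to which of $i_2,j_2$ belong to $L$ and to $M$; alternatively one can invoke an FKG- or negative-association-type theorem for product measures conditioned on a fixed cardinality, but spelling out the injection keeps the argument self-contained.
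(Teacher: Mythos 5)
Your overall strategy coincides with the paper's: the case $i_1\neq j_1$ is immediate from positivity of $p$ and $q$, and for $i_1=j_1$ both you and the paper use $q_{i_1}\ge 1$ to reduce everything to the single correlation inequality $p_{i_1}^{i_2,j_2}\le p_{i_1}^{i_2}p_{i_1}^{j_2}$ for $i_2\neq j_2$. The issue is that this inequality is the entire content of the lemma, and your proposal does not actually prove it. The injection you sketch is flawed as stated: if $j_2\in L$, $i_2\notin L$ and you set $L'=(L\setminus\{j_2\})\cup\{i_2\}$, then to preserve the pair's total weight $M'$ would have to surrender a copy of $i_2$ and absorb a copy of $j_2$; but $M\in\compset^{i_2 j_2}$ already contains $j_2$, so no size-$k$ set $M'\in\compset^{j_2}$ restores the product of exponentials, and the "remaining cases, suitably adjusting" is precisely where the difficulty lives. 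Your fallback --- negative association of the fixed-cardinality Gibbs measure with product weights (conditional Poisson sampling) --- is a true and citable fact, but as phrased ("a standard consequence of the negative correlation of indicator variables under sampling without replacement with product weights") it essentially restates the claim rather than proving it, so on its own it leaves a genuine gap unless you supply a reference or a proof.

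For comparison, the paper closes this gap by a careful expansion: after clearing denominators it decomposes both double sums over $\LL_k\times\LL_k^{ij}$ and $\LL_k^{i}\times\LL_k^{j}$ according to which of $i,j$ belong to each index set, cancels the matching blocks, divides out the common factor $y_iy_j$, and is left with the coefficientwise inequality
\[
\sum_{L\in\LL_k}\sum_{L'\in\LL_{k-2}}\prod_{\ell\in L}y_\ell\prod_{\ell'\in L'}y_{\ell'}
\;\le\;
\sum_{L\in\LL_{k-1}}\sum_{L'\in\LL_{k-1}}\prod_{\ell\in L}y_\ell\prod_{\ell'\in L'}y_{\ell'}
\]
over $m-2$ ground indices (a coefficientwise Newton-type inequality $e_k e_{k-2}\le e_{k-1}^2$), which it settles by counting, for each multiset of $2k$ indices with $r$ repetitions, the multiplicities $\binom{2k-2r-2}{k-r-2}$ versus the central binomial coefficient $\binom{2k-2r-2}{k-r-1}$. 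If you want your route to be complete and self-contained, you should either carry out an injection at this reduced level (where it does work, because the cardinalities $k$ and $k-2$ versus $k-1$ and $k-1$ give you the needed slack) or explicitly invoke a negative-association theorem for such conditioned product measures with a precise citation.
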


\begin{proof}
	Notice that we can rewrite \eqref{eq:secondpartial} in the following way:
\begin{equation}
\label{eq:sumsplit}
\frac{\partial^2 f_k^{\beta,\delta}}{\partial x_{i_1,i_2}\partial x_{j_1,j_2}}=\frac{\beta}{q_{i_1}}\Bigl[\delta p_{i_1}^{i_2}p_{j_1}^{j_2}\Bigl(\oneover{q_{j_1}}-\delta_{i_1,j_1}\Bigr)+\delta_{i_1,j_1}\bigl(p_{i_1}^{i_2,j_2}-p_{i_1}^{i_2}p_{j_1}^{j_2}\bigr)\Bigr].
\end{equation}
Obviously when \(i_1\neq j_1 \) then the expression is positive. When \(i_1= j_1\), notice that \(1/q_{i_1}-1\le 0\) as \(q_{i_1}\ge 1\), being a sum of non-negative summands whose at least one is exactly equal to \(1\) (the one corresponding to \(\ell=i_1\)). It thus remain to prove only that \(p_{i_1}^{i_2,j_2}-p_{i_1}^{i_2}p_{i_1}^{j_2}\le 0\) whenever \(i_2\neq j_2\).
Writing, \(y_\ell\coloneqq e^{\beta x_{i_1,\ell}}>0\) and multiplying  in both sides by \(	\sum_{L\in\LL_k}\exp\bigl(\beta\sum_{\ell\in L}x_{i_1,_\ell}\bigr)	\), this is equivalent to
\[
\sum_{L\in\LL_k}\prod_{\ell\in L}y_\ell	\sum_{L'\in\LL_k^{ij}}\prod_{\ell'\in L'}y_{\ell'}
\le 
\sum_{L\in\LL_k^i}\prod_{\ell\in L}y_\ell\sum_{L'\in\LL_k^{j}}\prod_{\ell'\in L'}y_{\ell'},
\]
where we also renamed \(i_2\) and \(j_2\) as \(i\) and \(j\), respectively, for simplicity of notation.
Since we can decompose the first double sum as 
\[
\sum_{L\in\LL_k^{\vphantom{\hat ij}}}\sum_{L'\in\LL_k^{\vphantom{\hat ij}ij}}=	\sum_{L\in\LL_k^{\vphantom{\hat ij}ij}}\sum_{L\prime\in\LL_k^{\vphantom{\hat ij}ij}}+	\sum_{L\in\LL_k^{i\hat j}}\sum_{L'\in\LL_k^{ij}}+\sum_{L\in\LL_k^{\hat i j}}\sum_{L'\in\LL_k^{ij}}+\sum_{L\in\LL_k^{\hat i\hat j}}\sum_{L'\in\LL_k^{ij}},
\]
and the second as
\[
\sum_{L\in\LL_k}\sum_{L'\in\LL_k^{ij}}=	\sum_{L\in\LL_k^{ij}}\sum_{L'\in\LL_k^{ij}}+	\sum_{L\in\LL_k^{i\hat j}}\sum_{L'\in\LL_k^{ij}}+\sum_{L\in\LL_k^{ i  j}}\sum_{L'\in\LL_k^{\hat i j}}+\sum_{L\in\LL_k^{ i\hat j}}\sum_{L'\in\LL_k^{\hat i j}},
\]
it appears that the only comparison that remains to be checked is
\[
\sum_{L\in\LL_k^{\hat i\hat j}}\sum_{L'\in\LL_k^{ij}}\prod_{\ell\in L}y_\ell	
\prod_{\ell'\in L'}y_{\ell'}\le 	\sum_{L\in\LL_k^{ i\hat j}}\sum_{L'\in\LL_k^{\hat ij}}\prod_{\ell\in L}y_\ell	
\prod_{\ell'\in L'}y_{\ell'}.
\]
By simplifying a factor \(y_i y_j\) on both sides, this is equivalent to
\[
\sum_{L\in\LL_k}\sum_{L'\in\LL_{k-2}}\prod_{\ell\in L}y_\ell	
\prod_{\ell'\in L'}y_{\ell'}\le 	\sum_{L\in\LL_{k-1}}\sum_{L'\in\LL_{k-1}}\prod_{\ell\in L}y_\ell	
\prod_{\ell'\in L'}y_{\ell'},
\]
with the caveat that here \(\mathcal{L} \) indicates a subset out of a total of \(m-2\) indexes (and not \(m\)). We will show that every distinct double product of the LHS appears with a larger multiplicity in the RHS.
Let \(L\) be a multiset of $2k$ indices (i.e. a set with \(2k\) elements that can be repeated, out of a set of  \(m-2\) total indexes).
Then, the quantity \(\prod_{\ell\in {L}} y_{\ell}\) appears in the LHS if and only if the two following conditions are met
\begin{itemize}
	\item no index \(\ell\) appears in \( L\) more than twice;
	\item there are at most \(k-2\) repeated indices.
\end{itemize}
Suppose that exactly \(r\le k-2\) indices are repeated in \(L\). These indexes have to appear both in each \(L\) and \(L'\) of the RHS. Then, in the LHS there are \(\binom{2k-2r-2}{k-r-2}\) ways to rearrange the remaining indexes among \(L\) and \(L'\), while in the RHS there are \(\binom{2k-2r-2}{k-r-1}\) of those. Notice that the latter quantity is a central binomial coefficient so it is necessarily larger than the former one.
\end{proof}
	
		\begin{lemma}
		\label{lem:absbound}
		For all \(x\in\R^{n\times m}\), it holds that
		\[
			\sum_{\subalign{(i_1,i_2)&\in[n]\times[m]\\(j_1,j_2)&\in[n]\times[m]\\(i_1,i_2)&\neq (j_1,j_2)}}\abs[\Big]{\frac{\partial^2 f^{\beta,\delta}_k}{\partial x_{i_1,i_2}\partial x_{j_1,j_2}}}
			\le
			\beta\frac{k}{m} \bigl(m-k+a_n(2m-1)k\delta\bigr),
		\]
	where \(a_n\coloneqq 1-\oneover{n}\in[0,1)\).
	\end{lemma}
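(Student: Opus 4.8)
The plan is to split the double sum according to the sign dichotomy established in \Cref{lem:signs}: the \textit{off-diagonal block} $i_1\neq j_1$, on which the second partials are nonnegative, and the \textit{diagonal block} $i_1=j_1$, $i_2\neq j_2$, on which they are nonpositive. On each block the absolute values can then be removed with a definite sign, after which everything reduces to the combinatorial identities of \Cref{lem:sums} applied to the explicit formula \eqref{eq:sumsplit}, together with two elementary Cauchy--Schwarz estimates.

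On the block $i_1\neq j_1$ the Kronecker deltas in \eqref{eq:sumsplit} disappear, so $\partial^2 f^{\beta,\delta}_k/\partial x_{i_1,i_2}\partial x_{j_1,j_2}=\beta\delta\,p_{i_1}^{i_2}p_{j_1}^{j_2}/(q_{i_1}q_{j_1})\geq 0$ and the absolute value is superfluous. Summing first over $i_2$ and $j_2$ and using $\sum_{i_2}p_{i_1}^{i_2}=k$ (from \Cref{lem:sums}) in both indices leaves $\beta\delta k^2\sum_{i_1\neq j_1}(q_{i_1}q_{j_1})^{-1}$. Since $\sum_{i_1}q_{i_1}^{-1}=1$, this inner sum equals $1-\sum_{i_1}q_{i_1}^{-2}$, and Cauchy--Schwarz gives $\sum_{i_1}q_{i_1}^{-2}\geq 1/n$; hence the off-diagonal block contributes at most $\beta\delta k^2 a_n$.

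On the block $i_1=j_1=:s$, $i_2\neq j_2$, the proof of \Cref{lem:signs} tells us that $q_s^{-1}-1\leq 0$ and $p_s^{i_2,j_2}-p_s^{i_2}p_s^{j_2}\leq 0$, so that \eqref{eq:sumsplit} yields
\[
\abs[\Big]{\frac{\partial^2 f^{\beta,\delta}_k}{\partial x_{s,i_2}\partial x_{s,j_2}}}=\frac{\beta}{q_s}\Bigl[\delta\,p_s^{i_2}p_s^{j_2}\Bigl(1-\oneover{q_s}\Bigr)+\bigl(p_s^{i_2}p_s^{j_2}-p_s^{i_2,j_2}\bigr)\Bigr],
\]
a sum of two nonnegative terms. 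Summing over the pairs $i_2\neq j_2$, I would use $\sum_{i_2\neq j_2}p_s^{i_2}p_s^{j_2}=k^2-\sum_{i_2}(p_s^{i_2})^2\leq k^2(1-1/m)$ (Cauchy--Schwarz together with $\sum_{i_2}p_s^{i_2}=k$) and $\sum_{i_2\neq j_2}p_s^{i_2,j_2}=k(k-1)$ (from the identity $\sum_{j_2\neq i_2}p_s^{i_2,j_2}=(k-1)p_s^{i_2}$ in \Cref{lem:sums}), which bounds the bracket by $\delta(1-q_s^{-1})\frac{k^2(m-1)}{m}+\frac{k(m-k)}{m}$. Summing over $s$ with $\sum_s q_s^{-1}=1$ and $\sum_s q_s^{-1}(1-q_s^{-1})=1-\sum_s q_s^{-2}\leq a_n$ (Cauchy--Schwarz again), the diagonal block contributes at most $\beta\bigl[a_n\delta\frac{k^2(m-1)}{m}+\frac{k(m-k)}{m}\bigr]$.

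Adding the two contributions and using $1+\frac{m-1}{m}=\frac{2m-1}{m}$ gives $\beta a_n\delta k^2\frac{2m-1}{m}+\beta\frac{k(m-k)}{m}=\beta\frac{k}{m}\bigl(m-k+a_n(2m-1)k\delta\bigr)$, which is the asserted bound. The only step needing genuine care is the sign bookkeeping that lets one strip the absolute values correctly on each block; once that is in place the remaining computations are direct applications of \Cref{lem:sums} and of Cauchy--Schwarz, so I foresee no serious obstacle.
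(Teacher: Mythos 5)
Your proposal is correct and follows essentially the same route as the paper: the same split into the blocks $i_1\neq j_1$ and $i_1=j_1,\,i_2\neq j_2$, removal of absolute values via the signs from \Cref{lem:signs}, and the same use of the identities of \Cref{lem:sums} together with the two Cauchy--Schwarz bounds $\sum_{i_1}q_{i_1}^{-2}\geq 1/n$ and $\sum_{i_2}(p_{i_1}^{i_2})^2\geq k^2/m$. The only difference is a cosmetic regrouping of the diagonal-block terms (the paper collects them as $\bigl(\tfrac{1+\delta}{q_{i_1}}-\tfrac{\delta}{q_{i_1}^2}\bigr)p_{i_1}^{i_2}p_{i_1}^{j_2}-p_{i_1}^{i_2,j_2}/q_{i_1}$ before estimating), which leads to the same final count.
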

	
	\begin{proof}
		Again, we evaluate the contribution of the summands of \eqref{eq:secondpartial} separately. We make use extensively of \Cref{lem:sums}. We start with the case \(i_1\neq j_1 \):
		\[
		\oneover{\beta}\sum_{\subalign{(i_1,i_2)&\in[n]\times[m]\\(j_1,j_2)&\in[n]\times[m]\\i_1&\neq j_1}}		\abs[\Big]{\frac{\partial^2 f^{\beta,\delta}_k}{\partial x_{i_1,i_2}\partial x_{j_1,j_2}}}=	\sum_{\subalign{(i_1,i_2)&\in[n]\times[m]\\(j_1,j_2)&\in[n]\times[m]\\i_1&\neq j_1}}\delta\,\frac{p_{i_1}^{i_2}p_{j_1}^{j_2}}{q_{i_1}q_{j_1}}=\delta\,k^2\Bigl(1-\sum_{i_1\in\oneton}\oneover{q_{i_1}^2}\Bigr)\le\delta k^2\Bigl(1-\oneover{n}\Bigr).
		\]
		The last inequality is due to the fact that the terms \(1/q_{i_1}\) are positive and sum to \(1\), in particular by Cauchy-Schwarz inequality the sum of their squares is minimum when all of them are equal to \(1/n\). 
		
		For the case \(i_1= j_1 \) we make use of the expression \eqref{eq:sumsplit} in which both summands are non-positive, in such a way that we can write
		
		\[
		\begin{split}
		\oneover{\beta}\sum_{\substack{(i_1,i_2)\in[n]\times[m]\\(j_1,j_2)\in[n]\times[m]\\i_1= j_1\\i_2\neq j_2}}	\abs[\Big]{\frac{\partial^2 f^{\beta,\delta}_k}{\partial x_{i_1,i_2}\partial x_{j_1,j_2}}}&
		=
		\sum_{\substack{(i_1,i_2)\in[n]\times[m]\\j_2\in[m]\\i_2\neq j_2}}\frac{1}{q_{i_1}}\Bigl[\delta p_{i_1}^{i_2}p_{i_1}^{j_2}\Bigl(1-\oneover{q_{i_1}}\Bigr)+\bigl(p_{i_1}^{i_2}p_{i_1}^{j_2}-p_{i_1}^{i_2,j_2}\bigr)\Bigr]\\
		&= \sum_{\substack{(i_1,i_2)\in[n]\times[m]\\j_2\in[m]\\i_2\neq j_2}} \Bigl(\frac{1+\delta}{q_{i_1}}-\frac{\delta}{q_{i_1}^2} \Bigr)p_{i_1}^{i_2}p_{i_1}^{j_2}- 		\sum_{\substack{(i_1,i_2)\in[n]\times[m]\\j_2\in[m]\\i_2\neq j_2}}\frac{p_{i_1}^{i_2,j_2}}{q_{i_1}}.
		\end{split}
		\]
		The first sum can be estimated as follows:
		\[
		\begin{split}
		\sum_{\substack{(i_1,i_2)\in[n]\times[m]\\j_2\in[m]\\i_2\neq j_2}}\Bigl(\frac{1+\delta}{q_{i_1}}-\frac{\delta}{q_{i_1}^2} \Bigr)p_{i_1}^{i_2}p_{i_1}^{j_2}
		&=
		\sum_{\substack{(i_1,i_2)\in[n]\times[m]}}\Bigl(\frac{1+\delta}{q_{i_1}}-\frac{\delta}{q_{i_1}^2} \Bigr)p_{i_1}^{i_2}\sum_{\substack{j_2\in[m]\\i_2\neq j_2}}p_{i_1}^{j_2}\\
		&=
		\sum_{\substack{(i_1,i_2)\in[n]\times[m]}}\Bigl(\frac{1+\delta}{q_{i_1}}-\frac{\delta}{q_{i_1}^2} \Bigr)p_{i_1}^{i_2}(k-p_{i_1}^{i_2})\\
			&=
		\sum_{\substack{i_1\in[n]}}\Bigl(\frac{1+\delta}{q_{i_1}}-\frac{\delta}{q_{i_1}^2} \Bigr)\Bigl(k^2-\sum_{i_2\in[m]}(p_{i_1}^{i_2})^2\Bigr)\\
		&\le
		 k^2\Bigl(1-\frac{1}{m}\Bigr)\Bigl(1+\delta\Bigl(1-\oneover{n}\Bigr)\Bigr)
		\end{split}
		\]
		where we used the fact that the numbers \(\{p_{i_1}^{i_2}\}_{i_2\in[m]}\) sum to $k$, so that the minimum of \(\sum_{i_2\in[m]}(p_{i_1}^{i_2})^2\) is \(k^2/m\), again by the Cauchy-Schwarz inequality. Concerning the last summand, we obtain
		\[
		\sum_{\substack{(i_1,i_2)\in[n]\times[m]\\j_2\in[m]\\i_2\neq j_2}}\frac{p_{i_1}^{i_2,j_2}}{q_{i_1}}=\!\!\!\!\sum_{\substack{(i_1,i_2)\in[n]\times[m]}}\frac{1}{q_{i_1}}\sum_{\substack{j_2\in[m]\\i_2\neq j_2}}p_{i_1}^{i_2,j_2}=\!\!\!\!\sum_{\substack{(i_1,i_2)\in[n]\times[m]}}\!\!\!\!(k-1)\frac{p_{i_1}^{i_2}}{q_{i_1}}=k(k-1).
		\]
		Note that the sums over indexes \((i_1\neq j_1)  \) (respectively, \(i_2\neq j_2\)) make sense only when \(n>1\) (respectively, \(m>1\)) but those expressions are \(0\) anyway in such cases. Merging the three contributions, one deduces the desired conclusion.
	\end{proof}
	
We are now ready to prove Theorem \ref{thm:bound}.
	
	\subsection{Proof of \texorpdfstring{\Cref{thm:bound}}{Theorem 2}}
		Once the analytical lemmas presented in the previous section are established, the proof follows from a classical interpolation technique --- already exploited e.g. in \cite{C, NPV14} or \cite[Chapter 6]{NP}. Without loss of generality, we can assume that \(X\) and \(Y\) are independent. Let \(\mu_{i_1,i_2}=\Ex(X_{i_1,i_2})=\Ex(X_{i_1,i_2})\). For \(t\in[0,1]\) we consider the random interpolation matrix \(Z_t\in\R^{n\times m}\) whose entries are given by
		\[
			(Z_t)_{i_1,i_2}\coloneqq\sqrt{1-t}(X_{i_1,i_2}-\mu_{i_1,i_2})+\sqrt{t}(Y_{i_1,i_2}-\mu_{i_1,i_2})+\mu_{i_1,i_2}.
		\]
		Note that \(Z_0=X\), \(Z_1=Y\) and \(\Ex((Z_t)_{i_1,i_2})=\mu_{i_1,i_2}\) for all \(t\in[0,1]\) and all \({i_1,i_2}\in[n]\times[m]\).
		We also define \(\psi(t)\coloneqq \Ex(f^{\beta,\delta}_k(Z_t))\), in such a way that \(\psi\) is differentiable with derivative
		\[
			\psi'(t)=\oneover{2}\sum_{(j_1,j_2)\in[n]\times[m]}\Ex\Bigl(\frac{\partial  f^{\beta,\delta}_k}{\partial x_{j_i,j_2}}(Z_t)\Bigl(\frac{Y_{j_1,j_2}-\mu_{j_1,j_2}}{\sqrt{t}}-\frac{X_{j_1,j_2}-\mu_{j_1,j_2}}{\sqrt{1-t}}\Bigr)\Bigr).
		\]
		Moreover, integration by parts yields
		\[
			\Ex\Bigl(\frac{\partial  f^{\beta,\delta}_k}{\partial x_{j_i,j_2}}(Z_t)(Y_{j_1,j_2}-\mu_{j_1,j_2})\Bigr)=\sum_{(i_1,i_2)\in[n]\times[m]}\sqrt{t}\,\Ex\Bigl(\frac{\partial^2 f^{\beta,\delta}_k}{\partial x_{i_1,i_2}\partial x_{j_1,j_2}}(Z_t)\Bigr)\sigma^Y_{i_1,i_2;j_1,j_2}
		\]
		and
		\[
			\Ex\Bigl(\frac{\partial  f^{\beta,\delta}_k}{\partial x_{j_i,j_2}}(Z_t)(X_{j_1,j_2}-\mu_{j_1,j_2})\Bigr)=\sum_{(i_1,i_2)\in[n]\times[m]}\sqrt{1-t}\,\Ex\Bigl(\frac{\partial^2 f^{\beta,\delta}_k}{\partial x_{i_1,i_2}\partial x_{j_1,j_2}}(Z_t)\Bigr)\sigma^X_{i_1,i_2;j_1,j_2}.
		\]
		Plugging both previous identities into the initial one, we obtain
		\[
		\psi'(t)=\oneover{2}	\sum_{\subalign{(i_1,i_2)&\in[n]\times[m]\\(j_1,j_2)&\in[n]\times[m]}}\Ex\Bigl(\frac{\partial^2 f^{\beta,\delta}_k}{\partial x_{i_1,i_2}\partial x_{j_1,j_2}}(Z_t)\Bigr)(\sigma^Y_{i_1,i_2;j_1,j_2}-\sigma^X_{i_1,i_2;j_1,j_2}).
		\]
		Note that, by construction, 
		\[
		\Ex(f^{\beta,\delta}_k(Y))-\Ex(f^{\beta,\delta}_k(X))=\psi(1)-\psi(0)=\int_0^1	\psi'(t)\,\mathrm{d} t.
		\]
Using \Cref{lem:Gordontrick} in combination with \Cref{lem:signs} shows that under the conditions on the signs of \(\gamma^X_{i_1,i_2;j_1,j_2}- \gamma^Y_{i_1,i_2;j_1,j_2}\) as per assumption, \(\psi'\ge 0\), hence \(\Ex(f^{\beta,\delta}_k(X))\le \Ex(f^{\beta,\delta}_k(Y))\), from which the second claim of the theorem follows by letting \(\beta\to\infty\), thanks to \Cref{lem:unifbound}. Again, \Cref{lem:unifbound} combined this time with \Cref{lem:absbound} and
\[
	\abs[\big]{\Ex(f^{\beta,\delta}_k(X))-\Ex(f^{\beta,\delta}_k(Y))}\le \sup_{t\in[0,1]}\abs{\psi'(t)}\le \frac{\beta k}{4m} (m-k+a_n(2m-1)k\delta)\gamma,
\] shows that
	\[
	\begin{split}
	\abs[\Big]{\Ex\Big(\min_{i_1\in\oneton}\sum_{h=m-k+1}^m X_{i_1,(h)}\Bigr)- &\Ex\Big(\min_{i_1\in\oneton}\sum_{h=m-k+1}^m Y_{i_1,(h)}\Bigr)}\\
	&\le \frac{\beta k}{4m} \bigl(m-k+a_n(2m-1)k\delta\bigr)\gamma+\oneover{\beta\delta}\log n+\oneover{\beta}\log\binom{m}{k} ,
	\end{split}
	\]
that is minimized by 
\[\beta=2\sqrt{\frac{m \log \binom{m}{k}}{(m-k) k\gamma}}
\qquad
\text{and}
\qquad
\delta=\sqrt{\frac{(m-k)\log n}{k a_n(2m-1)\log \binom{m}{k}}},
\] yielding the bounds in the statement.
\qed

\section{Comparison of min-max statistics of two random matrices, one of which is Gaussian}
\label{sec:D12}

\subsection{The language of Malliavin calculus}\label{ss:malliavin}

The reader is referred e.g. to the monograph \cite{NP} for a detailed discussion of the concepts presented in this subsection. 

 Let $\mathfrak H$ be a real separable Hilbert space, and write $\langle \cdot, \cdot \rangle_\mathfrak{H}$ to indicate the corresponding inner product. In what follows, we will write $G=\{G(h) \colon h\in\mathfrak H\}$ to denote an \textit{isonormal Gaussian process} over $\mathfrak H$, that is: $G$ is a (real) centered Gaussian family indexed by $\mathfrak H$ and such that $\Ex \bigl(G(h)G(g)\bigr) = \langle h,g\rangle_\mathfrak{H}$, for all $h,g \in \mathfrak H$. Every $F\in L^2(\sigma(G) )$ admits a \textit{Wiener-It\^o chaos expansion} of the form
\begin{equation}\label{e:chaoss}
F = \Ex(F) +  \sum_{q=1}^\infty I_q(f_q),
\end{equation}
where $f_q$ is an element of the symmetric $q$th tensor product $\mathfrak H^{\odot q}$ (which is uniquely determined by $F$), and  $I_q(f_q)$ is the $q$-th \textit{multiple Wiener-It\^o integral} of $f_q$ with respect to $G$. One writes $F\in \mathbb D^{1,2}$ if 
\begin{align*}
\sum_{q\ge 1} q q!\norm{f_q}_{\mathfrak{H}^{\otimes q}}^2<\infty.
\end{align*}
For $F\in \mathbb D^{1,2}$, we denote by $DF$ the \textit{Malliavin derivative} of $F$. Recall that $DF$ is by definition a random element with values in $\mathfrak H$. The operator $D$ satisfies a crucial \textit{chain rule}: if $\varphi$ is a mapping on $\mathbb{R}^m$ of class $C^1$ with bounded derivatives and if $F_1,\dots,F_m\in\mathbb D^{1,2}$, then $\varphi(F_1,\ldots,F_m)\in\mathbb D^{1,2}$, and also 
\begin{equation}\label{chainrule}
D\varphi(F_1,\ldots,F_m) = \sum_{i=1}^m \partial_i\varphi(F_1,\ldots,F_m)DF_i.
\end{equation}
For general $p>2$, we write $F\in\mathbb D^{1,p}$ if $F\in L^p(\sigma(G))\cap \mathbb D^{1,2} $ and $\Ex\bigl(\norm{DF}_\mathfrak{H}^p\bigr)<\infty$.  The adjoint of $D$, customarily referred to as the \textit{divergence operator} or the \textit{Skorohod integral}, is denoted by $\delta$ and satisfies the duality formula,
\begin{align}\label{duality}
\Ex\bigl(\delta(u)F\bigr) = \Ex\bigl(\langle u, DF\rangle_\mathfrak{H}\bigr)
\end{align} 
for all $F\in\mathbb D^{1,2} $, whenever $u\colon \Omega\to \mathfrak H$ is contained in the domain $\mathrm{ Dom}(\delta)$ of $\delta$. 

\smallskip

The \textit{generator of the Ornstein-Uhlenbeck semigroup}, written $L$, is defined by the relation
$
L F= - \sum_{q\ge 1} q I_q(f_q)
$
for every $F$ as in \eqref{e:chaoss} such that $\sum_{q\ge 1} q^2 q! \norm{f_q}_{\mathfrak H^{\otimes q}}^2<\infty$. The \textit{pseudo-inverse} of $L$, written $L^{-1}$, is the operator defined, as
$
L^{-1} F = - \sum_{q\ge 1} \frac{1}{q} I_q(f_q),
$
for all $F\in L^2(\sigma(G))$ as in \eqref{e:chaoss}. The fundamental relation linking the objects introduced above is the identity
\begin{align}
\label{e:relation}
F= \Ex (F )-\delta(DL^{-1}F),
\end{align}
which is valid for any $F\in L^2(\sigma(G))$ (this relation implies in particular that, for every $F\in L^2(\sigma(G))$, $DL^{-1}F \in \mathrm{ Dom} (\delta)$).

\bigskip
The notation and setting introduced above will prevail for the rest of the section; also, we will systematically assume that the underlying Hilbert space $\mathfrak H$ has infinite dimension.

\subsection{Main estimates}\label{ss:estimates}

We now fix the following objects: \(X=(X_{i_1,i_2})_{(i_1,i_2)\in[n]\times[m]}\) is a centered Gaussian random matrix with covariance matrix \((\sigma_{i_1,i_2;j_1,j_2})_{(i_1,i_2), (j_1,j_2)\in[n]\times[m]}\) (without loss of generality, we can assume that $X$ is extracted from the isonormal Gaussian process $G$);  \(F=(F_{i_1,i_2})_{(i_1,i_2)\in[n]\times[m]}\) is a centered random matrix with entries \(F_{i_1,i_2}\in\mathbb{D}^{1,2}\). We also write \(\sigma_{i_1,i_2}\) as shorthand for \(\sqrt{\Var(X_{i_1,i_2})}\), \(\underline{\sigma}_{i_1}\coloneqq\min_{i_2\in [m] }\sigma_{i_1,i_2}\), \(\underline{\sigma}=\min_{(i_1,i_2)\in[n]\times[m]}\sigma_{i_1,i_2}\) and \(\overline{\sigma}: =\max_{(i_1,i_2)\in[n]\times[m]}\sigma_{i_1,i_2}\).

For simplicity, we will now work with statistics such a the ones appearing on the left-hand side of \eqref{e:mainz} only in the case \(k=1\). To this end, recall that we write \(\min\max X\) to indicate \(\min_{i_1\in\oneton}\max_{i_2\in[m]} X_{i_1,i_2}\) and analogously for \(F\). Our main findings are contained in the statement of the forthcoming Theorem \eqref{thm:main}, providing an upper bound on the Kolmogorov distance between the distributions of the min-max's of \(X\) and \(F\).

In this section, we make the mild assumption that the covariance structure of the random matrix \(X\) is such that 
\begin{equation}
	\label{eq:assumption}
	\Prob\bigl(\abs[\big]{\{(i_1,i_2)\in[n]\times[m]:X_{i_1,i_2}=\min\max X\}}>1\bigr)=0;\tag{A}
\end{equation}
in other words: we require that, with probability one, there exists a unique pair $(i_*, j_*)$ such that $X_{i_*, j_*}= \min\max X$. 
This is, for instance, the case when \(\mathrm{corr}(X_{i_1,i_2},X_{j_1,j_2})<1\) for all distinct pairs \((i_1,i_2)\) and \((j_1,j_2)\),
 or when $X$ is the matrix associated with the order statistics of a vector \(W\), whose components verify \(\mathrm{corr}(W_{i},W_{j})<1\) for all \(i\neq j\), built as described in the proof of \Cref{c:order}. Indeed, by construction, the argument of the min-max of such a matrix is always unique with probability \(1\).

The next statement is the main achievement of the present section. In the special case $n=1$, it generalizes both \cite[Theorem 2]{CCK15} and \cite[Theorem 2.1]{K}. Note that reference \cite{CCK15} only deals with the case in which both $F$ and $X$ are Gaussian. 
\begin{theorem}
	\label{thm:main}
	Let the above assumptions prevail, suppose that \(\underline{\sigma}>0\) and let
\begin{equation}
	\label{eq:Delta}
\Delta\coloneqq\Ex\Biggl(\max_{\substack{(i_1,i_2)\in[n]\times[m]\\(j_1,j_2)\in[n]\times[m]}}\abs[\big]{\scalar{DF_{i_1,i_2},-DL^{-1}F_{j_1,j_2}}-\sigma_{i_1,i_2;j_1,j_2}}\Biggr).
\end{equation}
\begin{enumerate}[\textrm{(a)}]
	\item Let  \(a_{m,i}\coloneqq\Ex\bigl(\max_{i_2\in[m]}\frac{X_{i,i_2}}{\sigma_{i,i_2}}\bigr)\),  \(\alpha_{nm}\coloneqq\oneover{n}\sum_{i=1}^n a_{m,i}\) and \(p_{nm}\coloneqq n/\log{nm}\).
Suppose that there exist constants $\zeta, \zeta'>0$ such that   \(\zeta \leq \underline{\sigma}\leq\overline{\sigma}\leq \zeta'\). Then, there exists a constant \(C>0\), depending only on $\zeta, \zeta'$, such that
\begin{equation*}
\begin{split}
		\sup_{x\in\R}\abs[\big]{\Prob(\min\max F\le x)&-\Prob(\min\max X\le x)}\\
		&\le C \max\bigl(1,\alpha_{nm}^2,\log p_{nm},\log(1/\Delta)\bigr)^{1/3}n^{2/3}(\log nm)^{1/3}\Delta^{1/3}.
\end{split}
\end{equation*}
\item[\textrm{(b)}] Suppose that there exists a constant $\kappa>0$ such that \(\underline{\sigma}\geq \kappa\). Then, there exists a constant \(\tilde C>0\), depending only on $\kappa$, such that
	\begin{equation}\label{e:lage}
\sup_{x\in\R}\abs[\big]{\Prob(\min\max F\le x)-\Prob(\min\max X\le x)}
\le \tilde{C} \,n^{2/3}(\log m)^{1/3}(\log nm)^{1/3}\Delta^{1/3}. 
\end{equation}
\end{enumerate}
\end{theorem}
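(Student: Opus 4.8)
\textbf{Proof strategy for Theorem \ref{thm:main}.}

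The plan is to use a smoothing argument that compares $\min\max F$ and $\min\max X$ through a single master interpolation, combining the Gaussian-to-Gaussian-subordinated interpolation with a soft-min/soft-max regularization of the functional. As in Lemma \ref{lem:unifbound}, I would replace the non-smooth statistic $\min\max$ by the function $f_1^{\beta,\delta}$ (with $k=1$), which approximates it uniformly up to an additive error of order $\tfrac{1}{\beta\delta}\log n + \tfrac{1}{\beta}\log m$; I would also compose it with a smooth version $h_\lambda$ of the indicator $\indic{(-\infty,x]}$ (a standard mollification, so that $\|h_\lambda'\|_\infty \lesssim \lambda^{-1}$, $\|h_\lambda''\|_\infty\lesssim\lambda^{-2}$, $\|h_\lambda'''\|_\infty\lesssim\lambda^{-3}$). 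This produces a $C^3$ function $g = h_\lambda\circ f_1^{\beta,\delta}$ of the matrix entries with controlled derivative bounds in terms of $\beta,\delta,\lambda$. The key idea of CCK-type arguments is then: $|\Prob(\min\max F\le x) - \Prob(\min\max X\le x)|$ is bounded by $|\Ex(g(F)) - \Ex(g(X))|$ plus two anti-concentration error terms measuring how much probability $\min\max X$ puts in a window of width $O(\lambda + \tfrac{1}{\beta\delta}\log n + \tfrac{1}{\beta}\log m)$ around $x$.

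First I would handle the anti-concentration terms: here I invoke the new inequalities from Proposition \ref{prop:anticoncentration} and Lemma \ref{lem:anti-con2}, which (for $n=1$ these reduce to the CCK/Koike anti-concentration for the max of a Gaussian vector, and in general must control the min over rows of row-wise maxima) give a bound of the form $\sup_x \Prob(|\min\max X - x|\le \eta) \lesssim \eta\,(\alpha_{nm} + \sqrt{\log p_{nm}})$ (or, under the cruder assumption of part (b), $\lesssim \eta\sqrt{\log(nm)}$ after using $a_{m,i}\lesssim\sqrt{\log m}$ and crude bounds on $\alpha_{nm},\log p_{nm}$), where $\eta$ is the total width above. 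Second, for the smooth comparison $|\Ex(g(F))-\Ex(g(X))|$, I would run the interpolation $Z_t = \sqrt{1-t}\,X + \sqrt t\, F$ (with $F$ and $X$ taken independent), differentiate $t\mapsto\Ex(g(Z_t))$, and use Gaussian integration by parts on the $X$ part and the Malliavin relation \eqref{e:relation} together with the duality formula \eqref{duality} on the $F$ part; the difference of the two IBP outputs produces exactly the quantity $\scalar{DF_{i_1,i_2},-DL^{-1}F_{j_1,j_2}} - \sigma_{i_1,i_2;j_1,j_2}$, whose max-expectation is $\Delta$. The resulting bound is $|\Ex(g(F))-\Ex(g(X))| \lesssim \Delta \cdot \Ex\!\big(\textstyle\sum_{i_1,i_2,j_1,j_2}|\partial^2_{i_1 i_2; j_1 j_2}(h_\lambda\circ f_1^{\beta,\delta})(Z_t)|\big)$, and the sum of second derivatives splits (by the chain rule) into a term $\lesssim \lambda^{-1}\sum|\partial^2 f_1^{\beta,\delta}|$, controlled via Lemma \ref{lem:absbound}, and a term $\lesssim \lambda^{-2}\big(\sum|\partial f_1^{\beta,\delta}|\big)^2$, controlled via \eqref{eq:firstpartial} and Lemma \ref{lem:sums} ($\sum_{i_1,i_2}p_{i_1}^{i_2}/q_{i_1} = k = 1$). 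A subtlety is that $\|DF_{i_1,i_2}\|_\mathfrak{H}$ and $\|DL^{-1}F_{j_1,j_2}\|_\mathfrak{H}$ need not be bounded, so one must either truncate or argue that only the product structure $\scalar{DF,-DL^{-1}F}$ enters after integration by parts against the bounded-derivative test function — this is the standard route in \cite{K} and I would follow it, absorbing the extra terms into $\Delta$.

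Finally, I would optimize over the three free parameters $\beta,\delta,\lambda$. The total error has the schematic form
\[
\text{(error)} \;\lesssim\; \Big(\lambda + \tfrac{1}{\beta\delta}\log n + \tfrac{1}{\beta}\log m\Big)\cdot A \;+\; \Delta\Big(\tfrac{1}{\lambda}\cdot B + \tfrac{1}{\lambda^2}\cdot C\Big),
\]
where $A$ is the anti-concentration constant ($\asymp \max(1,\alpha_{nm},\sqrt{\log p_{nm}})$ in case (a), $\asymp\sqrt{\log nm}$ in case (b)), $B\lesssim \beta n$ (from $\sum|\partial^2 f|\lesssim \beta(m-k+\ldots)k/m$ with $k=1$, which is $O(\beta)$; the $n$ comes from the $a_n$ factor being $<1$ but the row-count entering elsewhere — here I would track constants carefully) and $C \lesssim 1$ (from $(\sum|\partial f|)^2 = 1$). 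Choosing $\beta$ large enough to kill the $\log n,\log m$ terms relative to $\lambda$, so effectively $\eta\asymp\lambda$, the bound becomes $\lesssim \lambda A + \Delta(\beta/\lambda + 1/\lambda^2)$; one then picks $\beta \asymp \lambda^{-1}$ (so $\beta/\lambda \asymp 1/\lambda^2$) and balances $\lambda A \asymp \Delta/\lambda^2$, i.e. $\lambda \asymp (\Delta/A)^{1/3}$, which yields the announced rate $A^{1/3}\Delta^{1/3}$ times the appropriate power of $n$ and $\log nm$ — the $n^{2/3}$ and $(\log nm)^{1/3}$ factors emerging from the precise dependence of $B$ and of the anti-concentration constant on $n,m$. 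The logarithmic $\max(1,\alpha_{nm}^2,\log p_{nm},\log(1/\Delta))$ factor in part (a) appears because a more careful choice of $\lambda$ (as in the self-improving/iteration step in \cite{CCK15}) is needed when $\Delta$ is very small relative to $A$; tracking this iteration cleanly, and getting the anti-concentration for the \emph{min over rows} right (Proposition \ref{prop:anticoncentration} and Lemma \ref{lem:anti-con2}), is the main obstacle, everything else being a careful but routine bookkeeping of the parameter-dependent constants.
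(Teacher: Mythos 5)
Your proposal follows essentially the same route as the paper's proof: the soft-min-max approximation $f_1^{\beta,\delta}$ of Lemma \ref{lem:unifbound}, a smoothed-indicator CCK-type reduction of the Kolmogorov distance to a smooth-function discrepancy plus an anti-concentration term (Lemmas \ref{lem:5eps} and \ref{lem:tech}), the interpolation and integration-by-parts/Malliavin argument producing $\Delta$ against the sum of second derivatives (Proposition \ref{prop:smoothbound} together with Lemma \ref{lem:derivativebound}), the two anti-concentration inequalities (Proposition \ref{prop:anticoncentration} and Lemma \ref{lem:anti-con2}), and a final optimization over the smoothing parameters. The only discrepancies are at the level of bookkeeping, which the paper handles precisely: the factor $n^{2/3}$ comes entirely from the anti-concentration bounds, which carry a factor $n$ (they read $\lesssim \varepsilon\, n(\alpha_{nm}+\cdots)$ and $\lesssim \varepsilon\, n\sqrt{\log m}$, not $\varepsilon(\alpha_{nm}+\cdots)$), while the interpolation term is $O(\beta)$ uniformly in $n$ (not $O(\beta n)$); moreover one must take $\beta\asymp(\log nm)/\varepsilon$ (with $\delta=1$), not $\beta\asymp 1/\varepsilon$, to absorb the $\log n$ and $\log m$ softening errors, which is what produces the $(\log nm)\,\Delta/\varepsilon^2$ term, and the $\max\bigl(1,\alpha_{nm}^2,\log p_{nm},\log(1/\Delta)\bigr)$ factor in part (a) arises from directly substituting the optimal $\varepsilon$ into the $\sqrt{\log(\underline{\sigma}/\varepsilon)}$ appearing in Proposition \ref{prop:anticoncentration}, with no self-improving iteration needed.
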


 \medskip
 
\noindent\textit{Remark.} If $F$ is Gaussian, then the quantity $\Delta$ appearing in \eqref{eq:Delta} is simply the maximal discrepancy -- in absolute value -- between the entries of the covariance matrices. In this special case, our bounds can be compared with \cite[Theorem 2.1]{DHJL}. In particular, specialising such a result to maxima ($r=d$ in the notation of \cite{DHJL}) yields an estimate on the left-hand side of \eqref{e:lage} where the mapping $(n,m)\mapsto n^{2/3}(\log m)^{1/3}(\log nm)^{1/3}$ is replaced by an application  of the type $(n,m)\mapsto n^a m^b$, with $a,b>1$, and $\Delta^{1/3}$ is replaced by a index of discrepancy between the two covariance matrices which is of the order $\Delta$, for $\Delta$ converging to zero. We also observe that --- reasoning as in \cite[Corollary 2.1]{K} --- the bounds in the statement of Theorem \ref{thm:main} continue to hold if the matrices $F$ and $X$ are replaced by those with entries $\abs{F_{i_1,i_2}}$ and $\abs{X_{i_1,i_2}}$, respectively (up to a change in the exact value of the absolute constants $ C, \tilde C$).

 \medskip

We proceed with the proof of \Cref{thm:main}. First, we need a bound on the second derivatives the composition of a smooth function with the approximation function \(f_k^{\beta,\delta}\) defined in Lemma \ref{lem:unifbound} (note that, in the statement below, we consider a generic $k\in [m]$ despite only $k=1$ being relevant for the present section).
\begin{lemma}
	\label{lem:derivativebound}
	Let \(g\colon\R\to\R\) be twice continuously differentiable with bounded first and second derivatives. Then for all \((i_1,i_2),(j_1,j_2)\in[n]\times[m]\),
			\begin{equation*}
	\frac{\partial^2( g\circ f_k^{\beta,\delta})}{\partial x_{i_1,i_2}\partial x_{j_1,j_2}}
	=g''(f_k^{\beta,\delta})\frac{p_{i_1}^{i_2}}{q_{i_1}}\frac{p_{j_1}^{j_2}}{q_{j_1}}+g'(f_k^{\beta,\delta})\beta\Bigl[\delta\frac{p_{i_1}^{i_2}p_{j_1}^{j_2}}{q_{i_1}q_{j_1}}+\frac{\delta_{i_1,j_1}}{q_{i_1}}\bigl(-(1+\delta)p_{i_1}^{i_2}p_{j_1}^{j_2}+ p_{i_1}^{i_2,j_2}\bigr)\Bigr].
	\end{equation*}
	In particular 
	\begin{equation*}
		\sum_{\substack{(i_1,i_2)\in[n]\times[m]\\(j_1,j_2)\in[n]\times[m]}} \abs[\Big]{\frac{\partial^2( g\circ f_k^{\beta,\delta})}{\partial x_{i_1,i_2}\partial x_{j_1,j_2}}}
		\le 
	k^2	\norm{g''}_\infty +2\beta k(1+\delta k)\norm{g'}_\infty.
	\end{equation*}
\end{lemma}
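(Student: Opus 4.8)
The plan is to prove the identity for the second partial derivatives of $g\circ f_k^{\beta,\delta}$ first, and then deduce the uniform bound by summing absolute values and invoking the combinatorial identities from \Cref{lem:sums}. For the identity, I would simply apply the ordinary chain and product rules for second derivatives of a composition: writing $f = f_k^{\beta,\delta}$ for brevity, one has
\[
\frac{\partial^2 (g\circ f)}{\partial x_{i_1,i_2}\partial x_{j_1,j_2}} = g''(f)\,\frac{\partial f}{\partial x_{i_1,i_2}}\,\frac{\partial f}{\partial x_{j_1,j_2}} + g'(f)\,\frac{\partial^2 f}{\partial x_{i_1,i_2}\partial x_{j_1,j_2}}.
\]
Now I would substitute the first-derivative formula \eqref{eq:firstpartial}, namely $\partial f/\partial x_{i_1,i_2} = p_{i_1}^{i_2}/q_{i_1}$, into the $g''$ term, and the second-derivative formula \eqref{eq:secondpartial} into the $g'$ term. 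This reproduces the displayed identity verbatim, so this step is essentially bookkeeping and carries no real difficulty.

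For the uniform bound, I would split the double sum over $(i_1,i_2),(j_1,j_2)$ into the $g''$-contribution and the $g'$-contribution and bound each by the triangle inequality. The $g''$-part is
\[
\sum_{\substack{(i_1,i_2)\in[n]\times[m]\\(j_1,j_2)\in[n]\times[m]}} \abs[\big]{g''(f)}\,\frac{p_{i_1}^{i_2}}{q_{i_1}}\,\frac{p_{j_1}^{j_2}}{q_{j_1}} \le \norm{g''}_\infty\Bigl(\sum_{(i_1,i_2)\in[n]\times[m]}\frac{p_{i_1}^{i_2}}{q_{i_1}}\Bigr)^2,
\]
and by \Cref{lem:sums} — which gives $\sum_{i_2} p_{i_1}^{i_2} = k$ together with $\sum_{i_1} 1/q_{i_1} = 1$ — the inner sum equals $k$, yielding $k^2\norm{g''}_\infty$. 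For the $g'$-part, I would bound $\abs{g'(f)}\le\norm{g'}_\infty$ and then estimate $\beta$ times the sum of absolute values of the bracketed expression. Here I would not reuse the (sharper) bound of \Cref{lem:absbound}, since that lemma excludes the diagonal $(i_1,i_2)=(j_1,j_2)$; instead I would bound each of the three summands $\delta\, p_{i_1}^{i_2}p_{j_1}^{j_2}/(q_{i_1}q_{j_1})$, $(1+\delta)\,\delta_{i_1,j_1}\,p_{i_1}^{i_2}p_{j_1}^{j_2}/q_{i_1}$, and $\delta_{i_1,j_1}\,p_{i_1}^{i_2,j_2}/q_{i_1}$ in absolute value over the full index range. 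Using $\sum_{i_2}p_{i_1}^{i_2}=k$, $\sum_{i_1}1/q_{i_1}=1$, the bound $q_{i_1}\ge 1$, and $\sum_{j_2}p_{i_1}^{i_2,j_2} = k\,p_{i_1}^{i_2}$ (hence $\sum_{i_2,j_2}p_{i_1}^{i_2,j_2}=k^2$), the three sums contribute at most $\delta k^2$, $(1+\delta)k^2$ and $k^2$ respectively, for a total of $(2\delta k^2 + 2k^2)$ once multiplied by $\beta$; factoring gives $2\beta k(1+\delta k)$, matching the claimed bound.

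The only point requiring a little care — and the place I expect to spend the most attention — is getting the constants in the $g'$-contribution exactly right, i.e. confirming that the three pieces sum to $2k^2(1+\delta)$ rather than something slightly larger, so that the final bound reads $2\beta k(1+\delta k)$ and not a weaker constant; this just amounts to tracking the $\delta$'s through \Cref{lem:sums}. Everything else is a direct application of the chain rule and the already-established combinatorial identities, so no genuine obstacle arises.
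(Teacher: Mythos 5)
Your derivation of the identity is exactly the paper's: chain rule plus substitution of \eqref{eq:firstpartial} and \eqref{eq:secondpartial}, and your treatment of the $g''$-term (giving $k^2\norm{g''}_\infty$) is correct. The problem is the $g'$-term. Your term-by-term triangle inequality gives, as you yourself compute, $\delta k^2+(1+\delta)k^2+k^2=2k^2(1+\delta)$, and the final ``factoring'' step is an algebra slip: $2k^2(1+\delta)=2k^2+2\delta k^2$, whereas the stated constant is $2k(1+\delta k)=2k+2\delta k^2$. These coincide only for $k=1$; for every $k\ge 2$ your bound $2\beta k^2(1+\delta)\norm{g'}_\infty$ is strictly weaker than the claimed $2\beta k(1+\delta k)\norm{g'}_\infty$, so the lemma as stated is not proved by your argument (it is recovered only in the case $k=1$, which happens to be the only case used later in the paper).

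The deeper point is that the stated constant cannot be reached by bounding the three summands of \eqref{eq:secondpartial} separately over the full index range: you must keep the cancellation encoded in Lemmas \ref{lem:sums} and \ref{lem:signs}, which is what the paper means by ``the computations of Lemma \ref{lem:absbound}, where the diagonals are also taken into account''. A clean way to repair your proof is to regroup \eqref{eq:secondpartial} as
\[
\frac{\partial^2 f_k^{\beta,\delta}}{\partial x_{i_1,i_2}\partial x_{j_1,j_2}}
=\beta\delta\, p_{i_1}^{i_2}p_{j_1}^{j_2}\Bigl(\frac{1}{q_{i_1}q_{j_1}}-\frac{\delta_{i_1,j_1}}{q_{i_1}}\Bigr)
+\beta\,\frac{\delta_{i_1,j_1}}{q_{i_1}}\bigl(p_{i_1}^{i_2,j_2}-p_{i_1}^{i_2}p_{i_1}^{j_2}\bigr),
\]
with the convention $p_{i_1}^{i_2,i_2}=p_{i_1}^{i_2}$. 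Summing absolute values, the first piece contributes $\beta\delta k^2\bigl(1-\sum_{i_1}q_{i_1}^{-2}\bigr)$ from the indices $i_1\neq j_1$ and the same amount from $i_1=j_1$ (since $\abs{q_{i_1}^{-2}-q_{i_1}^{-1}}=q_{i_1}^{-1}(1-q_{i_1}^{-1})$), hence at most $2\beta\delta k^2$ in total. For the second piece, Lemma \ref{lem:signs} gives $p_{i_1}^{i_2,j_2}\le p_{i_1}^{i_2}p_{i_1}^{j_2}$ for $i_2\neq j_2$, while Lemma \ref{lem:sums} gives $\sum_{j_2}\bigl(p_{i_1}^{i_2,j_2}-p_{i_1}^{i_2}p_{i_1}^{j_2}\bigr)=k p_{i_1}^{i_2}-k p_{i_1}^{i_2}=0$; therefore $\sum_{i_2,j_2}\abs{p_{i_1}^{i_2,j_2}-p_{i_1}^{i_2}p_{i_1}^{j_2}}=2\sum_{i_2}p_{i_1}^{i_2}(1-p_{i_1}^{i_2})\le 2k$, and after multiplying by $q_{i_1}^{-1}$ and summing over $i_1$ this piece contributes at most $2\beta k$. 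Adding the two pieces gives exactly $2\beta k(1+\delta k)\norm{g'}_\infty$, which is the missing ingredient in your proposal.
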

\begin{proof}
		By the chain rule we have 
		\begin{equation*}
			\frac{\partial^2( g\circ f_k^{\beta,\delta})}{\partial x_{i_1,i_2}\partial x_{j_1,j_2}}
			=
			g''(f_k^{\beta,\delta})\frac{\partial f_k^{\beta,\delta}}{\partial x_{i_1,i_2}}\frac{\partial f_k^{\beta,\delta}}{\partial x_{i_1,i_2}}
			+
			g'(f_k^{\beta,\delta})\frac{\partial^2 f_k^{\beta,\delta}}{\partial x_{i_1,i_2}\partial x_{j_1,j_2}},
		\end{equation*}
		which yields the statement because of the computations above. The last inequality follows from the computations of \Cref{lem:absbound}, where the diagonals are also taken into account.
\end{proof}

The next statement generalises Lemma 5 in \cite{CCK15}.

\begin{lemma}
	\label{lem:density}
	Let \(W\) be a Gaussian random matrix in \(\R^{n\times m}\) with \(\Var(W_{i_1,i_2})=1\) for all \((i_1,i_2)\in[n]\times[m]\) and \eqref{eq:assumption} holds. Then the distribution of \(\min\max W\) admits a density with respect to the Lebesgue measure given by
	\begin{equation*}
	g_{n,m}(z)=\phi(z)\sum_{i=1}^n H_i(z)G_i(z),
	\end{equation*}
	where
	\[
		H_i(z)\coloneqq\Prob\Bigl(\max_{\ell\in[m]} W_{i,\ell}=\min\max W\Bigm|  \max_{\ell\in[m]} W_{i,\ell}=z\Bigr)
	\]
	and
	\[
		G_i(z)\coloneqq\sum_{\ell=1}^me^{\Ex(W_{i,\ell})z-\Ex(W_{i,\ell})^2}\Prob\Bigl( W_{i,k}=\max_{\ell\in[m]} W_{i,\ell}\Bigm| W_{i,k}=z\Bigr).
	\]
	Moreover \(z\mapsto e^{\Ex(W_{i,\ell})z-\Ex(W_{i,\ell})^2}\Prob\bigl( W_{i,k}=\max_{\ell\in[m]} W_{i,\ell}\bigm| W_{i,k}=z\bigr)\) is non-decreasing as soon as \(\Ex (W_{i,k})\ge 0\).
	\begin{proof}
Exploiting assumption \eqref{eq:assumption}, one hast that, for every real $t$,  
\[
\Prob\bigl(\min \max W\leq t\bigr) 
= \sum_{i=1}^n \Prob\Big(\max_{\ell\in[m]} W_{i,\ell}=\min\max W\cap   \max_{\ell\in[m]} W_{i,\ell}\leq t\Bigr).
\]
Writing
\[
\begin{split}
\Prob\Big(\max_{\ell\in[m]} W_{i,\ell}&=\min\max W \, \cap \,   \max_{\ell\in[m]} W_{i,\ell}\leq t\Bigr) \\
& = \int_{-\infty}^t \Prob\Big(\max_{\ell\in[m]} W_{i,\ell}=\min\max W \Bigm| \max_{\ell\in[m]} W_{i,\ell} = z\Bigr)\mu_i(\mathrm{d}z),
\end{split}
\]
where $\mu_i$ stands for the law of $\max_{\ell\in[m]} W_{i,\ell}$, we deduce the desired conclusion from Lemmas 5 and 6 in \cite{CCK15}.
\end{proof}

\end{lemma}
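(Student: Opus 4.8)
The goal is to establish the density formula for $\min\max W$ when $W$ is a unit-variance Gaussian matrix satisfying the uniqueness assumption~\eqref{eq:assumption}, together with the monotonicity claim. The plan is to decompose the CDF of $\min\max W$ according to which row achieves the minimum of the row-maxima, to differentiate inside the integral, and to invoke the two auxiliary lemmas (Lemmas 5 and 6 of \cite{CCK15}) that furnish the relevant density computations for a single Gaussian maximum.

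First I would write, using that by~\eqref{eq:assumption} the row $i$ achieving $\max_{\ell} W_{i,\ell} = \min\max W$ is a.s.\ unique, the disjoint-event decomposition
\[
\Prob(\min\max W \le t) = \sum_{i=1}^n \Prob\Bigl(\max_{\ell\in[m]} W_{i,\ell} = \min\max W,\ \max_{\ell\in[m]} W_{i,\ell}\le t\Bigr).
\]
Then I would condition on the value $z$ of $\max_{\ell\in[m]} W_{i,\ell}$, writing each summand as $\int_{-\infty}^t H_i(z)\,\mu_i(\mathrm d z)$ where $\mu_i$ is the law of the $i$-th row-maximum and $H_i(z)$ is the stated conditional probability. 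The task then reduces to identifying the density of $\mu_i$; this is exactly the content of Lemma 5 in \cite{CCK15} applied to the Gaussian vector $(W_{i,1},\dots,W_{i,m})$, which expresses the density of a maximum of jointly Gaussian variables as $\phi(z)$ times $\sum_{\ell} e^{\Ex(W_{i,\ell})z - \Ex(W_{i,\ell})^2}\Prob(W_{i,\ell} = \max_{\ell'} W_{i,\ell'}\mid W_{i,\ell}=z)$, i.e.\ $\phi(z)G_i(z)$ in the notation of the statement. Differentiating $\Prob(\min\max W\le t)$ in $t$ and substituting this expression for $\mu_i(\mathrm d z)$ gives the claimed formula $g_{n,m}(z) = \phi(z)\sum_{i=1}^n H_i(z)G_i(z)$.

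For the monotonicity assertion, I would observe that the map $z\mapsto e^{\Ex(W_{i,\ell})z-\Ex(W_{i,\ell})^2}\Prob(W_{i,\ell}=\max_{\ell'} W_{i,\ell'}\mid W_{i,\ell}=z)$ is the product of the increasing exponential factor $e^{\Ex(W_{i,\ell})z-\Ex(W_{i,\ell})^2}$ (increasing precisely when $\Ex(W_{i,\ell})\ge 0$) with the conditional probability $\Prob(W_{i,\ell}=\max_{\ell'}W_{i,\ell'}\mid W_{i,\ell}=z)$, which is non-decreasing in $z$ because raising the conditioning value of $W_{i,\ell}$ only makes it more likely to dominate the remaining coordinates (the conditional law of the other coordinates given $W_{i,\ell}=z$ shifts affinely in $z$, and the event $\{W_{i,\ell'}\le z \text{ for all } \ell'\}$ becomes more probable). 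This last step is precisely Lemma 6 in \cite{CCK15}, so it can be cited directly; the product of two non-negative non-decreasing functions is non-decreasing, giving the claim.

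The main obstacle — such as it is — is purely one of justification rather than of substance: one must make sure that the interchange of differentiation and integration (Leibniz rule) is legitimate, which follows from the absolute continuity of each $\mu_i$ (Lemma 5 of \cite{CCK15}) and dominated convergence since $H_i$ is bounded by $1$; and one must check that assumption~\eqref{eq:assumption} genuinely gives disjointness of the $n$ events up to a null set, so that the decomposition of the CDF has no overcounting. Both points are routine, so the proof is essentially an assembly of the cited single-maximum facts with the row decomposition, and I would keep it brief by deferring the two technical density/monotonicity computations to \cite[Lemmas 5 and 6]{CCK15}.
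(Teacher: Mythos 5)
Your proposal is correct and follows essentially the same route as the paper: the same disjoint row decomposition of the CDF justified by assumption \eqref{eq:assumption}, the same conditioning on the row-maximum with its law $\mu_i$, and the same appeal to Lemmas 5 and 6 of \cite{CCK15} for the density $\phi(z)G_i(z)$ and the monotonicity claim. The extra remarks on justifying the differentiation and the disjointness are fine but not needed beyond what the paper already implies.
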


The following is a generalization of \cite[Theorem 3]{CCK15}. For every \(\eps>0\) the anti-concentration function of a r.v. \(Y\) is defined as 
\[
	\anti{Y}\coloneqq\sup_{x\in\R}\Prob\bigl(\abs{Y-x}\le\eps\bigr).
\]
If \(Y\) if absolutely continuous with essentially bounded density \(f\) then it follows from the definition that
\[
	\anti{Y}\le 2\eps\, \norm{f}_\infty,
\]
where \(\norm{\cdot}_\infty\) is the essential supremum.
\begin{lemma}[Anti-concentration inequality, first variant]
	\label{prop:anticoncentration}
	
	 There exists \(C>0\) that depends only on \(\underline{\sigma}\) and \(\overline{\sigma}\) such that for all \(\eps>0\) 
	\begin{equation}
		\anti{\min\max X}
		\le C\eps\Bigl(\sum_{i=1}^n a_{m,i}+n\max\bigl(1,\sqrt{\log(\underline{\sigma}/\eps)}\bigr) \Bigr).
	\end{equation}
\end{lemma}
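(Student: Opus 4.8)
The plan is to reduce the anti-concentration bound for $\min\max X$ to the anti-concentration (i.e.\ density) bounds for each individual row-maximum $\max_{i_2} X_{i,i_2}$, which are already available from Chernozhukov-Chetverikov-Kato theory. First I would normalize: replace each $X_{i,i_2}$ by $X_{i,i_2}/\sigma_{i,i_2}$, but more directly I would work row by row. For a fixed row $i$, the maximum $M_i := \max_{i_2\in[m]} X_{i,i_2}$ is the maximum of a (possibly non-centered) Gaussian vector with variances between $\underline\sigma^2$ and $\overline\sigma^2$; by the classical Gaussian max-density estimates (CCK, \cite{CCK15}), $M_i$ has a density bounded by a constant times $\Ex(M_i/\underline{\sigma}) + $ something of order $\max(1,\sqrt{\log(\underline\sigma/\eps)})$ --- here I would invoke precisely the kind of bound that \Cref{lem:density} makes available, noting that a Gaussian row-max with a one-sided truncation still has a density of the stated form, and that $\Ex(M_i) \le \overline\sigma\, a_{m,i}$ plus a mean term controlled by $\overline\sigma/\underline\sigma$.

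Then the key step is the union-type inequality: since $\min\max X = \min_i M_i$, for any $x\in\R$ and $\eps>0$ one has the pointwise inclusion
\[
\{\,|\min_i M_i - x| \le \eps\,\} \subseteq \bigcup_{i=1}^n \{\,|M_i - x|\le\eps\,\},
\]
because the event on the left forces at least one $M_i$ to lie in $[x-\eps,x+\eps]$ (the smallest of them must be within $\eps$ of $x$ if the min is, and in fact the min being $\eps$-close to $x$ means either some $M_i\le x+\eps$ achieving the min is $\ge x-\eps$, etc. --- the clean way is: if $\min_i M_i \ge x-\eps$ then all $M_i\ge x-\eps$, and if additionally $\min_i M_i \le x+\eps$ then the minimizing index gives $M_{i_*}\in[x-\eps,x+\eps]$). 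Taking probabilities and a union bound yields
\[
\Prob(|\min\max X - x|\le \eps) \le \sum_{i=1}^n \Prob(|M_i - x|\le\eps) \le \sum_{i=1}^n \anti{M_i} \le \sum_{i=1}^n 2\eps\,\norm{g_i}_\infty,
\]
where $g_i$ is the density of $M_i$. Taking the supremum over $x$ and plugging in the per-row density bound gives a sum of $n$ terms, each of the form $C\eps(a_{m,i} + \max(1,\sqrt{\log(\underline\sigma/\eps)}))$, which sums to exactly the right-hand side claimed, with $\sum_i a_{m,i}$ and $n\max(1,\sqrt{\log(\underline\sigma/\eps)})$.

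I expect the main obstacle to be the per-row density estimate in the non-centered case with non-constant variances: one must show that $\max_{i_2}(X_{i,i_2})$, which is a maximum of Gaussians with means $\Ex(X_{i,i_2})$ and variances $\sigma_{i,i_2}^2 \in [\underline\sigma^2,\overline\sigma^2]$, has an essentially bounded density with bound $C(\Ex(\max_{i_2} X_{i,i_2}/\sigma_{i,i_2}) + \max(1,\sqrt{\log(\underline\sigma/\eps)}))$ uniformly, and crucially that the logarithmic correction term depends on $\eps$ only through $\log(\underline\sigma/\eps)$ in a way that survives when one restricts attention to the window $[x-\eps,x+\eps]$ --- this is where one uses that small values of the argument (where the Gaussian tail would blow up the density) carry negligible probability, so the density is actually only queried on a region where it is controlled, at the cost of the $\eps$-dependent logarithmic factor. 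This argument is essentially \cite[Theorem 3]{CCK15} carried out for a single row, and I would reduce to it by a scaling/comparison argument, tracking how the constants depend on $\underline\sigma,\overline\sigma$. The rest --- the union bound and the summation --- is routine.
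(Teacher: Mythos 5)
Your argument is correct, and it reaches the stated bound by a genuinely different route from the paper. You work at the level of events: since the minimum over the finitely many row-maxima $M_i=\max_{i_2}X_{i,i_2}$ is attained, the inclusion $\{\abs{\min_i M_i-x}\le\eps\}\subseteq\bigcup_{i=1}^n\{\abs{M_i-x}\le\eps\}$ is valid, and a union bound reduces everything to the per-row quantity $\anti{M_i}$, which is exactly the object controlled by \cite[Theorem 3]{CCK15}: each row is a \emph{centered} Gaussian vector (so your worry about non-centered rows is moot here --- the shifted means in the paper's proof arise only from its own rescaling device), with row-wise standard deviations lying in $[\underline{\sigma},\overline{\sigma}]$, and the mismatch between the row-wise minimum standard deviation and the global $\underline{\sigma}$ inside the logarithm costs only an additive $\sqrt{\log(\overline{\sigma}/\underline{\sigma})}$, absorbable into the constant $C$. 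Summing the $n$ row bounds gives precisely $C\eps\bigl(\sum_i a_{m,i}+n\max(1,\sqrt{\log(\underline{\sigma}/\eps)})\bigr)$. One caution: the intermediate line $\anti{M_i}\le 2\eps\norm{g_i}_\infty$ should not be taken literally, since the usable density bound for a Gaussian maximum grows linearly before the Gaussian decay kicks in; as you yourself note, one must run the windowed/tail-cutoff argument of CCK (which is where the $\eps$-dependent logarithm comes from), i.e.\ quote their Theorem 3 rather than a sup-density estimate. By contrast, the paper proves the lemma by first rescaling the whole matrix to unit variances (introducing nonnegative means of size proportional to $\abs{x}$), then invoking the exact density representation of \Cref{lem:density} --- which requires assumption \eqref{eq:assumption} --- bounding $H_i\le 1$ and $\phi(z)G_i(z)$ via \cite[Lemma 7]{CCK15}, and finally disposing of the $\abs{x}$-dependent term through the Gaussian deviation inequality; its bound $H_i\le1$ plays the same role as your union bound. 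What your route buys is a shorter, more modular proof that bypasses \Cref{lem:density} and shows assumption \eqref{eq:assumption} is not actually needed for this particular lemma; what the paper's route buys is a self-contained argument (not outsourcing the hard step to CCK's Theorem 3 as a black box) together with a min-max density formula of independent interest.
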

\begin{proof} We divide the proof in two steps.
	\begin{itemize}
		\item[(i)] Reduction to unit variance. Let \(x\ge 0\) arbitrary and let
		\[
			W_{i_1,i_2}\coloneqq\frac{X_{i_1,i_2}-x}{\sigma_{i_1,i_2}}+\frac{x}{\underline{\sigma}}.
		\]
		Then \(\mu_{i_1,i_2}\coloneqq\Ex(W_{i_1,i_2})=x\bigl(\oneover{\underline{\sigma}}-\oneover{\sigma_{i_1,i_2}}\bigr)\ge 0\) and \(\Var(W_{i_1,i_2})=1\). Let \(Z\coloneqq\min\max W\). Since the function \(\min\max\) is non-decreasing in each argument, we have
		\begin{equation}
		\begin{split}
		\Prob\bigl(\abs{\min\max X-x}\le\eps\bigr)
		&\le
		\Prob\Bigl(\abs[\Big]{\min_{i_1}\max_{i_2}\frac{X_{i_1,i_2}-x}{\sigma_{i_1,i_2}}}\le\frac{\eps}{\underline{\sigma}}\Bigr)\\
		&\le
		\sup_{y\in\R}\Prob\Bigl(\abs[\Big]{\min_{i_1}\max_{i_2}\frac{X_{i_1,i_2}-x}{\sigma_{i_1,i_2}}+\frac{x}{\underline{\sigma}}-y}\le\frac{\eps}{\underline{\sigma}}\Bigr)\\
		&=
		\sup_{y\in\R}\Prob\Bigl(\abs{Z-y}\le\frac{\eps}{\underline{\sigma}}\Bigr).
		\end{split}
		\end{equation}
		\item[(ii)] We proceed with bounding the density of \(Z\). Since \(W_{i_1,i_2}\sim\mathcal{N}(\mu_{i_1,i_2},1)\), by \Cref{lem:density}, assuming \eqref{eq:assumption} we have that the density of \(Z\) has the form
		\begin{equation}
			g_{n,m}(z)=\phi(z)\sum_{i=1}^n H_i(z)G_i(z)\le \phi(z)\sum_{i=1}^nG_i(z)
		\end{equation}
		\end{itemize}
	We know from \cite[Lemma 7]{CCK15} that 
	\[
		\phi(z)G_i(z)\le 2\max(z,1)\exp\Bigl(-\frac{\max(z-\overline{z}-a_{m,i},0)^2}{2}\Bigr)\le 2(\overline{z}+a_{m,i}+1),
	\]
	where \(\overline{z}\coloneqq x\bigl(\oneover{\underline{\sigma}}-\oneover{\overline\sigma}\bigr)\),  hence
	\[
		g_{n,m}(z)\le 2\sum_{i=1}^n(\overline{z}+a_{m,i}+1).
	\]
	In particular, for all \(y\in\R\) and \(t>0\) we have
	\[
		\Prob\Bigl(\abs{Z-y}\le\frac{\eps}{\underline{\sigma}}\Bigr)\le 4\frac{\eps}{\underline{\sigma}}\sum_{i=1}^n(\overline{z}+a_{m,i}+1)
	\]
	and using step (i) we get
	\[
		\Prob\bigl(\abs{\min\max X-x}\le\eps\bigr)\le	\sup_{y\in\R}\Prob\Bigl(\abs{Z-y}\le\frac{\eps}{\underline{\sigma}}\Bigr)\le 4\frac{\eps}{\underline{\sigma}}\sum_{i=1}^n(\overline{z}+a_{m,i}+1)
	\]
	Repeating the argument with \(x<0\) one gets instead, one gets
		\[
	\Prob\bigl(\abs{\min\max X-x}\le\eps\bigr)\le4\frac{\eps}{\underline{\sigma}}\Bigl(n\abs{x}\Bigl(\oneover{\underline{\sigma}}-\oneover{\overline\sigma}\Bigr)+\sum_{i=1}^n 1+a_{m,i}\Bigr).
	\]
	If \(\underline{\sigma}=\overline{\sigma}=\sigma\) then \(	\Prob\bigl(\abs{\min\max X-x}\le\eps\bigr)\le \frac{4\eps}{\sigma}\sum_{i=1}^n 1+a_{m,i}\). On the contrary if \(\underline{\sigma}<\overline{\sigma}\), note that the claim is true trivially for \(\underline{\sigma}/\eps<1\). If \(\eps\le \underline{\sigma}\) instead, for \(\abs{x}\ge \eps+\overline{\sigma}\bigl(\oneover{n}\sum_{i=1}^n a_{m,i}+\sqrt{2\log(\underline{\sigma}/\eps)}\bigr)\), since the min-max is a Lipschitz function, we can use the Gaussian deviation inequality (see \cite[Theorem 7.1]{L})
	\[
	\begin{split}
		\Prob\bigl(\abs{\min\max X-x}\le\eps\bigr)
		&\le
		\Prob\bigl(\min\max X\ge\abs{x}-\eps\bigr) \\
		& \le
			\Prob\Bigl(\min\max X\ge\frac{\overline{\sigma}}{n}\sum_{i=1}^n a_{m,i}+\overline{\sigma}\sqrt{2\log(\underline{\sigma}/\eps)}\Bigr)\\
		&\le
			\Prob\Bigl(\min\max X\ge\Ex(\min\max X)+\overline{\sigma}\sqrt{2\log(\underline{\sigma}/\eps)}\Bigr)
			\le 
			\frac{\eps}{\underline{\sigma}},
	\end{split}
	\]
	where we used the fact that \( \frac{\overline{\sigma}}{n}\sum_{i=1}^n a_{m,i}\ge\oneover{n}\sum_{i=1}^n\Ex\bigl(\max_{i_2\in[m]}X_{i,i_2}\bigr)\ge \Ex(\min \max X)\), since \(\min\max X\le \max_{i_2\in[m]}X_{i,i_2}\) for all \(i\in[n]\). When \(\abs{x}\le \eps+\overline{\sigma}\bigl(\oneover{n}\sum_{i=1}^n a_{m,i}+\sqrt{2\log(\underline{\sigma}/\eps)}\bigr)\le\abs{x}\le \underline{\sigma}+\overline{\sigma}\bigl(\oneover{n}\sum_{i=1}^n a_{m,i}+\sqrt{2\log(\underline{\sigma}/\eps)}\bigr) \) instead, we get
	\begin{equation*}
	\begin{split}
		\Prob\bigl(\abs{\min\max X-x}\le\eps\bigr)
		&\le 4\frac{\eps}{\underline{\sigma}}\Bigl(n\abs{x}\Bigl(\oneover{\underline{\sigma}}-\oneover{\overline\sigma}\Bigr)+\sum_{i=1}^n 1+a_{m,i}\Bigr)\\
		&\le
		4\frac{\eps}{\underline{\sigma}}\Bigl(n\Bigl(1-\frac{\underline{\sigma}}{\overline{\sigma}}\Bigr)+n\Bigl(\frac{\overline{\sigma}}{\underline{\sigma}}-1\Bigr)\sqrt{2\log(\underline{\sigma}/\eps)}+\Bigl(\frac{\overline{\sigma}}{\underline{\sigma}}-1\Bigr)\sum_{i=1}^n a_{m,i}\Bigr)\\
		&\le C\eps\Bigl(\sum_{i=1}^n a_{m,i}+n\max\bigl(1,\sqrt{\log(\underline{\sigma}/\eps)}\bigr) \Bigr),
	\end{split}
	\end{equation*}
which concludes the proof.
\end{proof}

\noindent\textit{Remark.} Rewriting the bound of \Cref{prop:anticoncentration} as 
\[
	\anti{\min\max X}\le Cn\eps(\alpha_{nm}+\max\bigl(1,\sqrt{\log(\underline{\sigma}/\epsilon)}\bigr),
\]
we see that the multiplicative factor \(n\) cannot be improved. In fact, suppose that the matrix \(X\) is composed by \(n\) i.i.d. rows, which are copies of a \(m\text{-dimensional}\) standard Gaussian row vector \(X'\), and let \(\alpha\coloneqq \Ex(\max X')\). 
Then
\[
\Prob(\min\max X\ge z)=\Prob(\max X'\ge z)^n,
\]
and 
\[
	\frac{\mathrm{d}}{\mathrm{d} z}\Prob(\min\max X\ge z)=n \Prob(\max X'\ge z)^{n-1}\frac{\mathrm{d}}{\mathrm{d} z}\Prob(\max X'\ge z).
\]
Exploiting the sub-Gaussian deviation inequality for \(\max X'\) we get that the last expression is less or equal than 
\[
n \exp\Bigl(-\frac{(n-1)\max(z-\alpha,0)^2}{2}\Bigr)\times 2\max(z,1)\exp\Bigl(-\frac{\max(z-\alpha,0)^2}{2}\Bigr),
\]
which is uniformly bounded from above by \(2n(\alpha+1) \), the desired order.

\bigskip

The following is a generalization of \cite[Lemma 4.4]{CCK16}.

\begin{lemma}[Anti-concentration inequality, second variant]
	\label{lem:anti-con2}
	For all \(\eps>0\)
	\[
	\anti{\min\max X}\le2\sqrt{2}\eps\sum_{i_1=1}^n\oneover{\underline{\sigma}_{i_1}}\bigl(\sqrt{2}+\sqrt{\log m}\bigr)\le 2\sqrt{2}\frac{\eps}{\underline{\sigma}}n\bigl(\sqrt{2}+\sqrt{\log m}\bigr).
	\]
\end{lemma}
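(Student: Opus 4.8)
The plan is to reduce the problem to the one-dimensional anti-concentration of a single row's maximum, and then to apply a union-type argument across the $n$ rows. Fix $\eps>0$ and $x\in\R$. The event $\{\abs{\min\max X - x}\le\eps\}$ is contained in $\{\min\max X \le x+\eps\}$; since $\min_{i_1}\max_{i_2}X_{i_1,i_2}\le \max_{i_2}X_{i_1,i_2}$ for \emph{every} row $i_1$, this event forces at least one row $i_1$ to satisfy $\max_{i_2}X_{i_1,i_2}\ge x-\eps$ (otherwise all row-maxima, hence their minimum, would be $<x-\eps$). More precisely, on $\{\abs{\min\max X-x}\le\eps\}$ the row $i_*$ achieving the outer minimum has $\max_{i_2}X_{i_*,i_2}\in[x-\eps,x+\eps]$, so
\[
\Prob\bigl(\abs{\min\max X - x}\le\eps\bigr)\le \sum_{i_1=1}^n \Prob\Bigl(\abs[\Big]{\max_{i_2\in[m]}X_{i_1,i_2}-x}\le\eps\Bigr).
\]
Taking the supremum over $x$ and using that $\sup_x$ of a sum is at most the sum of the $\sup_x$'s, it suffices to bound $\anti{\max_{i_2\in[m]}X_{i_1,i_2}}$ for each fixed $i_1$.

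Next I would handle the single-row quantity $M_{i_1}\coloneqq\max_{i_2\in[m]}X_{i_1,i_2}$. Writing $Y_{i_1,i_2}\coloneqq X_{i_1,i_2}/\sigma_{i_1,i_2}$, each $Y_{i_1,i_2}$ is a (possibly non-centered, possibly correlated) Gaussian with unit variance, and since $\sigma_{i_1,i_2}\ge\underline{\sigma}_{i_1}>0$ one has $M_{i_1}\le \underline{\sigma}_{i_1}\max_{i_2}Y_{i_1,i_2}$ only when $X_{i_1,i_2}\ge0$; more robustly, the map $x\mapsto\max_{i_2}X_{i_1,i_2}$ composed with scaling gives $\abs{M_{i_1}-x}\le\eps\Rightarrow \abs{\max_{i_2}Y_{i_1,i_2}-x/\sigma'}\le\eps/\underline{\sigma}_{i_1}$ for an appropriate rescaling (the same bookkeeping as in step (i) of the proof of \Cref{prop:anticoncentration}). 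Thus $\anti{M_{i_1}}\le \mathcal{L}(\max_{i_2}Y_{i_1,i_2},\eps/\underline{\sigma}_{i_1})$. For the unit-variance Gaussian maximum of $m$ terms one invokes the classical anti-concentration estimate (this is exactly \cite[Lemma 4.4]{CCK16} in the case $n=1$, or can be extracted from \Cref{lem:density}): the density of $\max_{i_2}Y_{i_1,i_2}$ is bounded by $\sqrt{2}(\sqrt{2}+\sqrt{\log m})$, whence $\mathcal{L}(\max_{i_2}Y_{i_1,i_2},\delta)\le 2\sqrt{2}\,\delta(\sqrt{2}+\sqrt{\log m})$ for all $\delta>0$.

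Combining the two steps, $\anti{\min\max X}\le\sum_{i_1=1}^n 2\sqrt{2}\,\frac{\eps}{\underline{\sigma}_{i_1}}(\sqrt{2}+\sqrt{\log m})$, which is the first inequality; the second follows since $\underline{\sigma}_{i_1}\ge\underline{\sigma}$ for every $i_1$, so $\sum_{i_1}1/\underline{\sigma}_{i_1}\le n/\underline{\sigma}$. The main obstacle I anticipate is the bookkeeping in the reduction to unit variance when the $\sigma_{i_1,i_2}$ differ across $i_2$ within a single row: the rescaling that makes all variances equal to one shifts the means and the threshold $x$, so one must be careful that the Lipschitz/monotonicity argument of step (i) of \Cref{prop:anticoncentration} still delivers a bound of the clean form $\eps/\underline{\sigma}_{i_1}$ times the unit-variance anti-concentration function, uniformly in $x$; a secondary point is citing the correct form of the density bound $\sqrt{2}(\sqrt{2}+\sqrt{\log m})$ for the maximum of possibly non-centered, correlated standard Gaussians, which is where one leans on \Cref{lem:density} together with the estimate in \cite[Lemma 7]{CCK15}.
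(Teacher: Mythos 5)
Your outer reduction is sound and, modulo presentation, it is the same mechanism as the paper's: on $\{\abs{\min\max X-x}\le\eps\}$ the row achieving the outer minimum has its maximum in $[x-\eps,x+\eps]$, so a union bound over the $n$ rows reduces the claim to the anti-concentration of a single row maximum at scale $\eps$; for a Gaussian row whose standard deviations are bounded below by $\underline{\sigma}_{i_1}$ this is exactly Nazarov's inequality (equivalently \cite[Lemma 4.4]{CCK16}, the $n=1$ case of the present lemma), giving $2\sqrt{2}\,\eps(\sqrt{2}+\sqrt{\log m})/\underline{\sigma}_{i_1}$ per row and hence the stated constants. The paper runs the same union-bound-plus-Nazarov scheme in a slightly different packaging: it whitens each row, includes $\{\min\max X\le x+\eps\}\setminus\{\min\max X\le x\}$ into the union over rows of the differences between a rectangle $C_{i_1,x}$ and its $\eps/\underline{\sigma}_{i_1}$-enlargement, and applies Nazarov's rectangle inequality \cite{N} to each difference, thereby bounding the density of $\min\max X$ rather than $\anti{\min\max X}$ directly; the constants and the role of Nazarov are identical, so your route is an acceptable alternative provided you cite \cite[Lemma 4.4]{CCK16} (or \cite{N}) for the per-row step.

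The part you should discard is the middle paragraph. After the coordinate-wise rescaling of step (i) of \Cref{prop:anticoncentration}, the recentred means are $x\bigl(\oneover{\underline{\sigma}_{i_1}}-\oneover{\sigma_{i_1,i_2}}\bigr)$, hence $x$-dependent and unbounded, and the sublevel set of the original row maximum becomes a rectangle with a non-constant corner rather than a sublevel set of $\max_{i_2}Y_{i_1,i_2}$; so the implication ``$\abs{M_{i_1}-x}\le\eps\Rightarrow\abs{\max_{i_2}Y_{i_1,i_2}-x/\sigma'}\le\eps/\underline{\sigma}_{i_1}$'' does not hold as stated. More importantly, the density bound you propose to extract from \Cref{lem:density} together with \cite[Lemma 7]{CCK15} is not $\sqrt{2}(\sqrt{2}+\sqrt{\log m})$ uniformly in the means: that route gives a bound of order $1+a_m+\max_j \mu_j$, which blows up with the $x$-dependent recentred means --- this is precisely why \Cref{prop:anticoncentration} takes its more complicated form and needs the Gaussian deviation step. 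The mean-insensitive Lipschitz constant $\sqrt{2\log m}+2$ for the distribution function of a maximum with variances bounded below is exactly Nazarov's inequality (stated for arbitrary rectangle corners, so mean shifts are absorbed into the corner); invoking it, or \cite[Lemma 4.4]{CCK16}, directly for each row closes the argument with the claimed constants.
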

\begin{proof}
	Let \(\Sigma_{i_1}\) be the covariance matrix of the row vector \(X_{i_1,\cdot}\), in particular \(X_{i_1,\cdot}\stackrel{\text{d}}{=}\Sigma_{i_1}^{1/2} Z_{i_1}+\mu_{i_1}\) for some \(Z_{i_1}\sim\mathcal{N}(0,\mathrm{I}_m)\). Note that the \(i_2\text{-th}\) row of \(\Sigma_{i_1}^{1/2}\) can be written as \(\sigma_{i_1,i_2}v_{i_1,i_2}\) for some unit-norm row vector \(v_{i_1,i_2}\in\R^{1\times m}\), which yields
	\[
	B_x\coloneqq\Bigl\{\min_{i_1\in\oneton}\max_{i_2\in[m]} (\Sigma_{i_1}^{1/2} Z_{i_1}+\mu_{i_1})_{i_2}\le x\Bigr\}=\bigcup_{i_1\in\oneton}\Bigl\{\forall i_2\in[m]\; v_{i_1,i_2}Z_{i_1}\le \frac{x-\mu_{i_1,i_2}}{\sigma_{i_1,i_2}}\Bigr\} \eqqcolon\bigcup_{i_1\in[n]}C_{i_1,x}.
	\]
	In particular, since \(\min\max X\) is absolutely continuous, its density \(f\) is given by
	\[
	f(x)=\lim_{\eps\to 0} \oneover{\eps}\Prob(B_{x+\eps}\setminus B_x),
	\]
	for almost all \(x\in\R\). For all \(i_1\in[n]\) we have
	\[
	C_{i_1,x+\eps}=\Bigl\{\forall i_2\in[m]\; v_{i_1,i_2}Z_{i_1}\le \frac{x+\eps-\mu_{i_1,i_2}}{\sigma_{i_1,i_2}}\Bigr\}
	\!\subseteq\!
	\Bigl\{\forall i_2\in[m]\; v_{i_1,i_2}Z_{i_1}\le \frac{x-\mu_{i_1,i_2}}{\sigma_{i_1,i_2}}+\frac{\eps}{\underline{\sigma}_{i_1}}\Bigr\}
	=C_{i_1,x}^{\eps/\underline{\sigma}_{i_1}},
	\]
	hence
	\[
	B_{x+\eps}\setminus B_x
	=\bigcup_{i_1\in[n]}C_{i_1,x+\eps}\setminus \bigcup_{i_1\in\oneton} C_{i_1,x}
	\subseteq \bigcup_{i_1\in[n]}C_{i_1,x}^{\eps/\underline{\sigma}_{i_1}}\setminus \bigcup_{i_1\in\oneton} C_{i_1,x}
	\subseteq \bigcup_{i_1\in[n]} C_{i_1,x}^{\eps/\underline{\sigma}_{i_1}}\setminus C_{i_1,x}.
	\]
	By the union bound, we deduce that 
	\[
	f(x)\le \lim_{\eps\to 0} \sum_{i_1=1}^n\oneover{\eps}\Prob\bigl(C_{i_1,x}^{\eps/\underline{\sigma}_{i_1}}\setminus C_{i_1,x}\bigl).
	\]
	Using Nazarov's inequality (see \cite{N}) on each term of the last sum gives that
	\[
		\lim_{\eps\to 0}\oneover{\eps}\Prob\bigl(C_{i_1,x}^{\eps/\underline{\sigma}_{i_1}}\setminus C_{i_1,x}\bigl)\le \frac{2\sqrt{2}}{\underline{\sigma}_{i_1}}\bigl(\sqrt{2}+\sqrt{\log m}\bigr),
	\]
	which allows us to conclude.
\end{proof}

\medskip

\noindent\textit{Remark.} In the previous Lemma, it is not necessary for \(X\) to be centered.
\medskip

In the case $n=1$,  the next statement is a generalization of \cite[Theorem 1]{CCK15} (case of $F$ Gaussian) and \cite[Theorem 2.1]{K} (general case).

\begin{proposition}
	\label{prop:smoothbound}
		Let \(g\colon\R\to\R\) be twice continuously differentiable with bounded first and second derivatives. Then
		\begin{equation*}
		\abs[\big]{\Ex\bigl(g\circ f_1(F)\bigr)- \Ex\bigl(g\circ f_1(X)\bigr)}
		\le 
		\Bigl(	\frac{1}{2}\norm{g''}_\infty +\beta (1+\delta )\norm{g'}_\infty\Bigr) \Delta
		\end{equation*}
In particular, in view of \Cref{lem:unifbound}, it also holds
		\begin{equation*}
			\abs[\big]{\Ex\bigl(g(\min\max F)\bigr)- \Ex\bigl(g (\min\max X)\bigr)}
			\le
				\Bigl(	\frac{1}{2}\norm{g''}_\infty +\beta (1+\delta )\norm{g'}_\infty\Bigr) \Delta+ \Bigl( \oneover{\beta}\log m +\oneover{\beta\delta}\log n\Bigr)\norm{g'}_\infty.
		\end{equation*}
\end{proposition}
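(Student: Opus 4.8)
The plan is to follow the interpolation-plus-integration-by-parts scheme already used in the proof of \Cref{thm:bound}, but now applied to the composition $g\circ f_1^{\beta,\delta}$ and comparing the Gaussian matrix $X$ against the Gaussian-subordinated matrix $F$. Since $f_1^{\beta,\delta}$ has bounded second derivatives (uniformly in the point, by \Cref{lem:derivativebound}) whenever $g$ is $C^2$ with bounded derivatives, the map $\varphi\coloneqq g\circ f_1^{\beta,\delta}\colon\R^{n\times m}\to\R$ is of class $C^2$ with bounded first and second derivatives; in particular $\varphi$ is eligible for the Malliavin chain rule \eqref{chainrule}, and all the expectations below are finite.

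First I would introduce, for $t\in[0,1]$, the interpolated matrix $W_t$ with entries $(W_t)_{i_1,i_2}\coloneqq \sqrt{t}\,F_{i_1,i_2}+\sqrt{1-t}\,X_{i_1,i_2}$, where (as in Section~\ref{sec:SF}) we may assume $F$ and $X$ independent, and set $\Psi(t)\coloneqq\Ex\bigl(\varphi(W_t)\bigr)$, so that $\Ex(g\circ f_1(F))-\Ex(g\circ f_1(X))=\Psi(1)-\Psi(0)=\int_0^1\Psi'(t)\,\mathrm dt$. Differentiating and using the chain rule,
\[
\Psi'(t)=\oneover{2}\sum_{(i_1,i_2)\in[n]\times[m]}\Ex\Bigl(\partial_{i_1,i_2}\varphi(W_t)\Bigl(\frac{F_{i_1,i_2}}{\sqrt t}-\frac{X_{i_1,i_2}}{\sqrt{1-t}}\Bigr)\Bigr).
\]
For the Gaussian part one uses ordinary Gaussian integration by parts (exactly as in the proof of \Cref{thm:bound}): $\Ex\bigl(\partial_{i_1,i_2}\varphi(W_t)X_{i_1,i_2}\bigr)=\sqrt{1-t}\sum_{(j_1,j_2)}\Ex\bigl(\partial^2_{i_1,i_2;j_1,j_2}\varphi(W_t)\bigr)\sigma_{i_1,i_2;j_1,j_2}$. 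For the non-Gaussian part one uses instead the Malliavin integration-by-parts identity \eqref{e:relation}, i.e. $F_{i_1,i_2}=-\delta(DL^{-1}F_{i_1,i_2})$, together with the duality formula \eqref{duality} and the chain rule \eqref{chainrule}, which gives
\[
\Ex\bigl(\partial_{i_1,i_2}\varphi(W_t)F_{i_1,i_2}\bigr)
=\sqrt t\sum_{(j_1,j_2)\in[n]\times[m]}\Ex\Bigl(\partial^2_{i_1,i_2;j_1,j_2}\varphi(W_t)\,\scalar{DF_{j_1,j_2},-DL^{-1}F_{i_1,i_2}}\Bigr),
\]
where one exploits that $D(X_{j_1,j_2})$ is deterministic in $\mathfrak H$ with $\scalar{DX_{j_1,j_2},-DL^{-1}F_{i_1,i_2}}$ contributing nothing extra because $X$ and $F$ live on the same isonormal process and one only needs $\scalar{D(W_t)_{j_1,j_2},-DL^{-1}F_{i_1,i_2}}=\sqrt t\,\scalar{DF_{j_1,j_2},-DL^{-1}F_{i_1,i_2}}$ (the cross term with $DX$ vanishes since $-DL^{-1}F_{i_1,i_2}$ is orthogonal to the first chaos spanned by $X$, or simply because one can realise $X$ independently of $F$). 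Substituting both identities into $\Psi'(t)$, the $\sqrt t$ and $\sqrt{1-t}$ cancel and one obtains
\[
\Psi'(t)=\oneover{2}\sum_{\substack{(i_1,i_2)\in[n]\times[m]\\(j_1,j_2)\in[n]\times[m]}}\Ex\Bigl(\partial^2_{i_1,i_2;j_1,j_2}\varphi(W_t)\bigl(\scalar{DF_{j_1,j_2},-DL^{-1}F_{i_1,i_2}}-\sigma_{i_1,i_2;j_1,j_2}\bigr)\Bigr).
\]

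From here the bound is immediate: take absolute values, pull out $\max_{i_1,i_2,j_1,j_2}\abs{\scalar{DF_{i_1,i_2},-DL^{-1}F_{j_1,j_2}}-\sigma_{i_1,i_2;j_1,j_2}}$ (using symmetry of the sum in $(i_1,i_2)\leftrightarrow(j_1,j_2)$ to match indices with \eqref{eq:Delta}), and control $\sum_{(i_1,i_2),(j_1,j_2)}\abs{\partial^2_{i_1,i_2;j_1,j_2}\varphi}$ by the estimate of \Cref{lem:derivativebound} with $k=1$, namely $\norm{g''}_\infty+2\beta(1+\delta)\norm{g'}_\infty$. This gives $\sup_{t}\abs{\Psi'(t)}\le\bigl(\tfrac12\norm{g''}_\infty+\beta(1+\delta)\norm{g'}_\infty\bigr)\Delta$ after the factor $\tfrac12$, which is the first claimed inequality. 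For the second claim one writes $g(\min\max F)-g(\min\max X)$ as $\bigl(g(\min\max F)-g(f_1(F))\bigr)+\bigl(g(f_1(F))-g(f_1(X))\bigr)+\bigl(g(f_1(X))-g(\min\max X)\bigr)$, bounds the middle term by the first part, and uses \Cref{lem:unifbound} (with $k=1$, so $\binom m1=m$) together with $\abs{g(a)-g(b)}\le\norm{g'}_\infty\abs{a-b}$ to bound each outer term by $\bigl(\tfrac1\beta\log m+\tfrac1{\beta\delta}\log n\bigr)\norm{g'}_\infty$; actually since $f_1(x)-\tfrac1\beta\log m\le \min\max x\le f_1(x)+\tfrac1{\beta\delta}\log n$, the total slack is the single term $\bigl(\tfrac1\beta\log m+\tfrac1{\beta\delta}\log n\bigr)\norm{g'}_\infty$, as stated.

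The main obstacle, and the only point requiring real care, is justifying the Malliavin integration-by-parts step: one must check that $\varphi=g\circ f_1^{\beta,\delta}$ composed with the $\mathbb D^{1,2}$ entries $F_{i_1,i_2}$ (and with the interpolation) genuinely lands in the domain of the chain rule and that the cross terms between the $X$-part and the $F$-part of $W_t$ do not spoil the cancellation — this is handled by the independence realisation of $X$ and $F$ and by the fact that $DL^{-1}F_{i_1,i_2}\in\mathrm{Dom}(\delta)$ automatically by \eqref{e:relation}. A secondary (purely bookkeeping) subtlety is getting the index pairing in the double sum to line up exactly with the definition of $\Delta$ in \eqref{eq:Delta}, which is why one symmetrises the sum before estimating. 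Everything else is a routine application of \Cref{lem:derivativebound} and \Cref{lem:unifbound}.
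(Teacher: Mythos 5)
Your proposal is correct and follows essentially the same route as the paper's proof: interpolate $Z(t)=\sqrt{t}\,F+\sqrt{1-t}\,X$ with $F$ and $X$ taken independent, integrate by parts in the Gaussian direction and via \eqref{e:relation}--\eqref{duality}--\eqref{chainrule} in the $F$-direction (the cross term vanishing under the independent realisation), and then bound $\sup_{t}\abs{\Psi'(t)}$ by $\Delta$ times the estimate of \Cref{lem:derivativebound} with $k=1$, the second inequality coming from \Cref{lem:unifbound}. One cosmetic remark: bounding the two outer terms of your decomposition separately only yields the slack $2\max\bigl(\oneover{\beta}\log m,\oneover{\beta\delta}\log n\bigr)\norm{g'}_\infty$ rather than the stated sum; to get exactly $\bigl(\oneover{\beta}\log m+\oneover{\beta\delta}\log n\bigr)\norm{g'}_\infty$ one can replace $f_1$ by $f_1+c$ with $c$ the midpoint of the interval $\bigl[-\oneover{\beta}\log m,\oneover{\beta\delta}\log n\bigr]$ (this leaves the first bound unchanged, since it only involves $\norm{g'}_\infty$ and $\norm{g''}_\infty$), a detail the paper itself also leaves implicit.
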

	\begin{proof}
		We may assume that \(F\) and \(X\) are independent, without loss of generality. Consider their interpolation given by \begin{equation*}
		Z(t)\coloneqq \sqrt{t}F+\sqrt{1-t}F,
		\end{equation*}
		for all \(t\in[0,1]\). 
		Let \(\phi\coloneqq g\circ f_1\) (note that by \Cref{lem:derivativebound}, all second partial derivatives of $\varphi$ bounded) and consider the function \(\Psi(t)\coloneqq\Ex\bigl(\phi(Z(t)) \bigr)\). Then \(\Psi\) is differentiable in \([0,1]\) and there its derivative is equal to
		\begin{equation*}
			\Psi'(t)=\oneover{2}\sum_{(j_1,j_2)\in\oneton\times\onetom}\Ex \Bigl( \frac{\partial\phi}{\partial x_{j_1,j_2}}(Z(t)) \Bigl(\frac{F_{j_1,j_2}}{\sqrt{t}}- \frac{X_{j_1,j_2}}{\sqrt{1-t}}\Bigr)\Bigr).
		\end{equation*}
		By independence and integration by parts we deduce that
		\begin{equation*}
			\sum_{(j_1,j_2)\in\oneton\times\onetom}\Ex \Bigl( \frac{\partial\phi}{\partial x_{j_1,j_2}}(Z(t))  \frac{X_{j_1,j_2}}{\sqrt{1-t}}\Bigr)
			=
			\sum_{\substack{(i_1,i_2)\in[n]\times[m]\\(j_1,j_2)\in[n]\times[m]}}\Ex \Bigl( \frac{\partial^2\phi}{\partial x_{i_1,i_2}\partial x_{j_1,j_2}}(Z(t)) \sigma_{i_1,i_2;j_1,j_2}\Bigr).
		\end{equation*}
		Analogously, reasoning as in the proof of \cite[Theorem 6.1.1]{NP} yields that
			\begin{equation*}
		\sum_{(j_1,j_2)\in\oneton\times\onetom}\Ex \Bigl( \frac{\partial\phi}{\partial x_{j_1,j_2}}(Z(t))  \frac{F_{j_1,j_2}}{\sqrt{t}}\Bigr)
		=
		\!\!\!\!\sum_{\substack{(i_1,i_2)\in[n]\times[m]\\(j_1,j_2)\in[n]\times[m]}}\Ex \Bigl(  \frac{\partial^2\phi}{\partial x_{i_1,i_2}\partial x_{j_1,j_2}}(Z(t))  \scalar{DF_{i_1,i_2},-DL^{-1}F_{j_1,j_2}}\Bigr).
		\end{equation*}
	Hence
		\begin{equation*}
	\Psi'(t)=\oneover{2}	\sum_{\substack{(i_1,i_2)\in[n]\times[m]\\(j_1,j_2)\in[n]\times[m]}}\Ex \Bigl(  \frac{\partial^2\phi}{\partial x_{i_1,i_2}\partial x_{j_1,j_2}}(Z(t))  \bigl(\scalar{DF_{i_1,i_2},-DL^{-1}F_{j_1,j_2}}-\sigma_{i_1,i_2;j_1,j_2} \bigr)\Bigr).
	\end{equation*}
	We eventually conclude that 
		\begin{equation*}
			\abs[\big]{\Ex\bigl(\phi( F)\bigr)- \Ex\bigl(\phi(X)\bigr)}
			\le 
			\norm{\phi'}_{\infty}
			\le
			\Bigl(	\frac{1}{2}\norm{g''}_\infty +\beta (1+\delta )\norm{g'}_\infty\Bigr) \Delta,
		\end{equation*}
	using \Cref{lem:derivativebound} with \(k=1\).
	\end{proof}
\begin{lemma}
	\label{lem:5eps}
	There exists a constant \(C>0\) such that for all \(A\subset\R\) measurable we have
	\begin{equation*}
	\Prob(\min\max F\in A)\le \Prob(\min\max X\in A^{5\eps})+C\,\frac{\log nm}{\eps^2}\,\Delta,
	\end{equation*}
	for all \(\eps>0\).
\end{lemma}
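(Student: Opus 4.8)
The plan is to dominate $\indic{\{\min\max F\in A\}}$ by a smooth function of the surrogate statistic $f_1^{\beta,\delta}(F)$, to pass from $F$ to $X$ by means of the smooth comparison bound of \Cref{prop:smoothbound}, and finally to convert the resulting estimate back into a statement about $\min\max X$ using the uniform approximation of \Cref{lem:unifbound}. Throughout, $C$ denotes a positive absolute constant whose value may change from line to line, and we may assume $nm\ge 2$, so that $\log nm>0$.

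First I would fix $\delta:=1$ and $\beta:=\frac{\log nm}{2\eps}$. Applying \Cref{lem:unifbound} with $k=1$ (so that $\binom{m}{1}=m$) yields the uniform estimate $\abs[\big]{f_1^{\beta,\delta}(x)-\min\max x}\le\oneover{\beta}\log m+\oneover{\beta\delta}\log n=\oneover{\beta}\log nm=2\eps$ for every $x\in\R^{n\times m}$. Independently, I would recall the standard mollification construction (cf.\ \cite{CCK15}): for any Borel set $B\subseteq\R$ and any $\rho>0$ there exists $g\in C^2(\R)$ with $\indic{B}\le g\le\indic{B^\rho}$, $\norm{g'}_\infty\le c_0/\rho$ and $\norm{g''}_\infty\le c_0/\rho^2$, with $c_0$ absolute; such a $g$ is obtained by convolving the indicator of an intermediate enlargement of $B$ with a smooth kernel of width of order $\rho$. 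Taking $B=A^{2\eps}$ and $\rho=\eps$ produces $g$ with $\indic{A^{2\eps}}\le g\le\indic{A^{3\eps}}$, $\norm{g'}_\infty\le c_0/\eps$ and $\norm{g''}_\infty\le c_0/\eps^2$.

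The two inputs would then be chained as follows. On $\{\min\max F\in A\}$ the uniform bound forces $f_1^{\beta,\delta}(F)\in A^{2\eps}$, hence $g\circ f_1^{\beta,\delta}(F)=1$; since $g\ge 0$ this gives $\indic{\{\min\max F\in A\}}\le g\circ f_1^{\beta,\delta}(F)$. Conversely, if $g\circ f_1^{\beta,\delta}(X)\neq 0$ then $f_1^{\beta,\delta}(X)\in A^{3\eps}$, and as $\min\max X$ lies within $2\eps$ of $f_1^{\beta,\delta}(X)$ we get $\min\max X\in A^{5\eps}$, so that $g\circ f_1^{\beta,\delta}(X)\le\indic{\{\min\max X\in A^{5\eps}\}}$ because $0\le g\le 1$. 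Taking expectations and inserting the first inequality of \Cref{prop:smoothbound} (with $k=1$), I would obtain
\[
\begin{split}
\Prob(\min\max F\in A)&\le\Ex\bigl(g\circ f_1^{\beta,\delta}(F)\bigr)\\
&=\Ex\bigl(g\circ f_1^{\beta,\delta}(X)\bigr)+\Bigl(\Ex\bigl(g\circ f_1^{\beta,\delta}(F)\bigr)-\Ex\bigl(g\circ f_1^{\beta,\delta}(X)\bigr)\Bigr)\\
&\le\Prob(\min\max X\in A^{5\eps})+\Bigl(\tfrac12\norm{g''}_\infty+\beta(1+\delta)\norm{g'}_\infty\Bigr)\Delta,
\end{split}
\]
and it would only remain to note that $\tfrac12\norm{g''}_\infty+\beta(1+\delta)\norm{g'}_\infty\le\frac{c_0}{2\eps^2}+\frac{2\beta c_0}{\eps}=\frac{c_0}{2\eps^2}+\frac{c_0\log nm}{\eps^2}\le C\,\frac{\log nm}{\eps^2}$, which is the asserted bound.

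I expect the only genuinely delicate ingredient to be the mollification step, namely the existence of a $C^2$ plateau $g$ as above with $\norm{g''}_\infty\lesssim1/\eps^2$ \emph{uniformly over arbitrary Borel sets $A$}: this hinges on the fact that the distance function $y\mapsto d(y,A^{2\eps})$ is globally $1$-Lipschitz, so that convolving a truncation of it with a smooth kernel of width comparable to $\eps$ (hence with $L^1$-norm of the first derivative of order $1/\eps$ and of the second derivative of order $1/\eps^2$) produces precisely the required bounds. Everything else is routine accounting of $\eps$-enlargements, with the factor $5$ arising as $3$ (the extent by which the support of $g$ exceeds $A$) plus $2$ (the $f_1^{\beta,\delta}$-approximation error at the chosen value of $\beta$).
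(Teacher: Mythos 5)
Your argument is correct and follows essentially the same route as the paper's proof: choose $\delta=1$ and $\beta$ of order $\log(nm)/\eps$ so that \Cref{lem:unifbound} makes $f_1^{\beta,\delta}$ a uniform $O(\eps)$-approximation of $\min\max$, sandwich indicators of enlargements of $A$ by a smoothed plateau function with $\norm{g'}_\infty\lesssim 1/\eps$, $\norm{g''}_\infty\lesssim 1/\eps^2$, and transfer from $F$ to $X$ via \Cref{prop:smoothbound}. The only differences are cosmetic (your $\beta=\log(nm)/(2\eps)$ versus the paper's $\log(nm)/\eps$, and the split of the total enlargement $5\eps$ as $2+1+2$ rather than $1+3+1$), and your explicit $\norm{g''}_\infty\le c_0/\eps^2$ in the mollification step is in fact the correct form of the bound the paper invokes.
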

\begin{proof}
	Let \(\delta=1\) and \(\eps=\oneover{\beta}\log nm\). Then by \Cref{lem:unifbound} we get
	\begin{equation*}
	\Prob(\min\max F\in A)
	\le 
	\Prob(f_1^{\beta,1}(F)\in A^\eps)=\Ex\bigl(\indic{A^{\eps}}(f_1^{\beta,1}(F)) \bigr).
	\end{equation*}
Now we use the fact that there exists a universal constant \(c>0\) such that for all \(\eps>0\) there exists a smooth function \(g\colon\R\to\R\) such that \(\norm{g'}_\infty\le \oneover{\eps}\),  \(\norm{g''}_\infty\le \frac{c}{\eps}\), and \(\indic{A^\eps}\le g\le \indic{A^{4\eps}}\). For such \(g\), by monotonicity and inequality above we get
\begin{equation*}
	\Prob(\min\max F\in A)\le\Ex\bigl(\indic{A^{\eps}}(f_1^{\beta,\delta}(F) \bigr)\le \Ex\bigl(g(f_1^{\beta,\delta}(F)) \bigr).
\end{equation*}
Now by \Cref{prop:smoothbound} we get
\begin{equation*}
	\abs[\big]{\Ex\bigl(g(f^{\beta,1}_1( F))\bigr)- \Ex\bigl(g (f^{\beta,1}_1(X))\bigr)}
\le 
\Bigl(	\frac{c}{2\eps^2} +\frac{2c}{\eps^2}\log nm\Bigr) \Delta= \frac{5c}{2}\,\frac{\log nm}{\eps^2}\,\Delta.
\end{equation*}
Since
\begin{equation*}
	\Ex\bigl(g (f^{\beta,1}_1(X))\bigr)\le \Ex\bigl(\indic{A^{4\eps}}(f_1^{\beta,1}(X) \bigr)\le \Ex\bigl(\indic{A^{5\eps}}(f_1^{\beta,1}(X)) \bigr)=\Prob(\min\max X\in A^{5\eps}),
\end{equation*}
we can conclude.
\end{proof}
	Finally, we need one more technical Lemma, proved in \cite[Lemma A.3]{K}.
\begin{lemma}
	\label{lem:tech}
 Consider two random variables \(U\) and \(V\). Suppose that there exist two positive constants \(\eps_1\) and \(\eps_2\) such that for all measurable set \(A\subseteq\R\) \begin{equation*}
 \Prob(U\in A)\le \Prob(V\in A^{\eps_1})+\eps_2.
 \end{equation*}
 Then
 \begin{equation*}
 \sup_{x\in\R}\abs[\big]{\Prob(U\le x)-\Prob(V\le x)}\le \sup_{x\in\R}\Prob(\abs{V-x}\le\eps_1)+\eps_2.
 \end{equation*}
\end{lemma}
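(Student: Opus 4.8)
The plan is to derive the Kolmogorov bound from two one-sided inequalities, each obtained by feeding a half-line into the hypothesis.

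First I would fix $x\in\R$ and apply the assumed inequality to the set $A=(-\infty,x]$, whose $\eps_1$-enlargement is $A^{\eps_1}=(-\infty,x+\eps_1]$. This gives $\Prob(U\le x)\le\Prob(V\le x+\eps_1)+\eps_2$, so that
\[
\Prob(U\le x)-\Prob(V\le x)\le \Prob(x<V\le x+\eps_1)+\eps_2\le \Prob(\abs{V-x}\le\eps_1)+\eps_2,
\]
where the last step uses the inclusion $\{x<V\le x+\eps_1\}\subseteq\{\abs{V-x}\le\eps_1\}$.

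For the opposite direction I would apply the hypothesis to $A=(x,\infty)$; its $\eps_1$-enlargement is contained in $[x-\eps_1,\infty)$, hence $\Prob(U>x)\le\Prob(V\ge x-\eps_1)+\eps_2$, and passing to complements yields $\Prob(V<x-\eps_1)\le\Prob(U\le x)+\eps_2$. Consequently
\[
\Prob(V\le x)-\Prob(U\le x)\le \Prob(x-\eps_1\le V\le x)+\eps_2\le \Prob(\abs{V-x}\le\eps_1)+\eps_2.
\]
Combining the two displays gives $\abs{\Prob(U\le x)-\Prob(V\le x)}\le\Prob(\abs{V-x}\le\eps_1)+\eps_2$ for every $x\in\R$; taking the supremum over $x$ and then bounding $\Prob(\abs{V-x}\le\eps_1)$ by $\sup_{x\in\R}\Prob(\abs{V-x}\le\eps_1)$ on the right-hand side completes the argument.

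No genuine obstacle is expected here: the proof is purely set-theoretic, and the only minor care required is in identifying the $\eps_1$-enlargements of half-lines and in the elementary inclusions relating the ``slab'' events $\{x<V\le x+\eps_1\}$ and $\{x-\eps_1\le V\le x\}$ to $\{\abs{V-x}\le\eps_1\}$; all of these are insensitive to whether the enlargement $A^{\eps_1}$ is defined via a strict or a non-strict inequality, since only inequalities between probabilities are used. This is precisely the content of \cite[Lemma A.3]{K}, to which we refer for the details.
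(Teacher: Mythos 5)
Your proof is correct. It is worth pointing out that the paper does not actually prove this lemma: it is stated with a pointer to \cite[Lemma A.3]{K}, so there is no internal argument to compare against. Your two half-line computations supply exactly the missing elementary argument: testing the hypothesis on $A=(-\infty,x]$ gives $\Prob(U\le x)-\Prob(V\le x)\le\Prob(x<V\le x+\eps_1)+\eps_2$, while testing it on $A=(x,\infty)$ and passing to complements gives $\Prob(V\le x)-\Prob(U\le x)\le\Prob(x-\eps_1\le V\le x)+\eps_2$, and both slabs sit inside $\{\abs{V-x}\le\eps_1\}$, so taking the supremum over $x$ finishes the proof. Your remark that the conclusion is insensitive to whether $A^{\eps_1}$ is defined with a strict or non-strict inequality is the right point of care, since in either convention the enlargements of the two half-lines are contained in the closed half-lines $(-\infty,x+\eps_1]$ and $[x-\eps_1,\infty)$ that you use. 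This is the standard argument for statements of this kind in the CCK--Koike literature, so your proposal can be read as a self-contained replacement for the external citation rather than a genuinely different route.
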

We are now ready to prove our main results.
\begin{proof}[Proof of \Cref{thm:main} \textrm{(a)}]
	Note that if \(\Delta\ge 1\) the result is trivially true. So we can assume \(\Delta\in(0,1)\).
	
	 \Cref{lem:5eps} allows to use \Cref{lem:tech} with \(\eps_1=5\eps\) and \(\eps_2=C\,\frac{\log nm}{\eps^2}\,\Delta\), to get
	\begin{equation}
		\label{eq:triang}
		\sup_{x\in\R}\abs[\big]{\Prob(\min\max F\le x)-\Prob(\min\max X\le x)}
		\le 
			\sup_{x\in\R}\Prob(\abs{\min\max X-x}\le 5\eps)+C\,\frac{\log nm}{\eps^2}\,\Delta.
	\end{equation}
	We can now use \Cref{prop:anticoncentration} to estimate the first summand, hence
	\begin{equation*}\begin{split}
		\sup_{x\in\R}\abs[\big]{\Prob(\min\max F\le x)&-\Prob(\min\max X\le x)}\\
	&\le C'\eps\Bigl(\sum_{i=1}^n a_{m,i}+n\max\bigl(1,\sqrt{\log(\underline{\sigma}/\eps)}\bigr) \Bigr)+C\,\frac{\log nm}{\eps^2}\,\Delta\\
	&\le C''\eps n\max\bigl(1,\alpha_{nm},\sqrt{\log(1/\eps)}\bigr)+C\,\frac{\log nm}{\eps^2}\,\Delta.
	\end{split}
	\end{equation*}
	Let \(p_{nm}=n/ \log nm\).
	We can estimate the right hand side by choosing
	\begin{equation*}
			\eps^3=\frac{\Delta}{p_{nm}\max\bigl(1,\alpha_{nm},\sqrt{\log p_{nm}},\sqrt{\log(1/\Delta)}\bigr)},
	\end{equation*}
	which yields
	\begin{equation*}
	\begin{split}
		\sup_{x\in\R}\abs[\big]{\Prob(\min\max F\le x)&-\Prob(\min\max X\le x)}\\
		\le  &n^{2/3}\log(nm)^{1/3}\Delta^{1/3}\Bigl[C''\frac{\max\bigl(1,\alpha_{nm},\sqrt{\log(1/\eps)}\bigr)}{\max\bigl(1,\alpha_{nm},\sqrt{\log p_{nm}},\sqrt{\log(1/\Delta)}\bigr)^{1/3}}\\
		&\hspace{100pt}+C\max\bigl(1,\alpha_{nm},\sqrt{\log p_{nm}},\sqrt{\log(1/\Delta)}\bigr)^{2/3}\Bigr].
		\end{split}
	\end{equation*}
	Since \(\max\bigl(1,\alpha_{nm},\log p_{nm},\sqrt{\log(1/\Delta)}\bigr)^{2/3}=\max\bigl(1,\alpha_{nm}^2,(\log p_{nm})^2,\log(1/\Delta)\bigr)^{1/3}\) we are  done if we show that  there exists a constant \(\tilde C\) such that
	\[
	\max\bigl(1,\alpha_{nm}^2,\log(1/\eps)\bigr)\le \tilde C \max\bigl(1,\alpha_{nm}^2,\log p_{nm},\log(1/\Delta)\bigr).
	\]
	Let \(\xi\coloneqq \max\bigl(\alpha_{nm},\sqrt{\log p_{nm}},\sqrt{\log(1/\Delta})\bigr)\). Then
	\[
	\log\Bigl(\frac{1}{\eps}\Bigr)=\frac{1}{3}\log\frac{p_{nm} \max(1,\log p_{nm},\xi)}{\Delta}\le\max\Bigl(\log p_{nm},\log(1/\Delta),\max(0,\log\log p_{nm},\log\xi)\Bigr).
	\]
	Hence,
	\[
	\log\Bigl(\frac{1}{\eps}\Bigr)\le \max(1,\log \alpha_{nm},\log p_{nm},\log (1/\Delta),
	\]
	which concludes the first claim.
	\end{proof}
	\begin{proof}[Proof of \Cref{thm:main} \textrm{(b)}]
		As in \cref{eq:triang} we have	
		\[
		\sup_{x\in\R}\abs[\big]{\Prob(\min\max F\le x)-\Prob(\min\max X\le x)}
		\le 
		\sup_{x\in\R}\Prob(\abs{\min\max X-x}\le 5\eps)+C\,\frac{\log nm}{\eps^2}\,\Delta.
		\]
		We can apply \Cref{lem:anti-con2} to obtain
		\[
		\sup_{x\in\R}\abs[\big]{\Prob(\min\max F\le x)-\Prob(\min\max X\le x)}
		\le C' \eps n\sqrt{\log m}+C\,\frac{\log nm}{\eps^2}\,\Delta.
		\]
		for some \(C'>0\) which depends only on \(m\). The last expression is minimized by choosing
		\[
		\eps=\Bigl(\frac{2C(\log nm)\Delta}{C'n\sqrt{\log m}}\Bigr)^{1/3},
		\]
		which yields
		\[
		\sup_{x\in\R}\abs[\big]{\Prob(\min\max F\le x)-\Prob(\min\max X\le x)}
		\le \tilde{C}\, n^{2/3}(\log m)^{1/3}(\log nm)^{1/3}\Delta^{1/3},
		\]
		concluding the proof.
	\end{proof}

\section{Application to matrices of multiple stochastic integrals}\label{s:mwi}

We will now apply our previous findings to matrices of multiple Wiener-It\^o integrals, as introduced in Section \ref{ss:malliavin} (whose setting will prevail throughout).

\subsection{A general estimate}

%

Let \(q,N\in\N\) and consider three sequences of natural numbers \(d=d(N)\), \(n=n(N)\) and \(m=m(N)\).  For every \((i_1,i_2)\in[n]\times[m]\), we consider a random variable of the type
\[
 F_{i_1, i_2} = F^N_{i_1,i_2}\coloneqq I_q(f^N_{i_1, i_2}),
\]
where $I_q$ indicates a multiple stochastic integral of order $q\geq 2$ and $f_{i_1,i_2} = f^N_{i_1,i_2} \in {\mathfrak H}^{\odot q}$ (when there is no risk of confusion, and in order to simplify the presentation, we will sometimes avoid to write the superscript $N$).

\begin{proposition}
\label{cor:polynomial}
	Suppose that for all \(N\in\N\), \(X^N=(X^N_{i_1,i_2})_{(i_1,i_2)\in[n]\times[m]}\) is a centered Gaussian random matrix with covariance matrix \((\sigma^N_{i_1,i_2;j_1,j_2})_{(i_1,i_2)\in[n]\times[m]}\) and \(F^N=(F^N_{i_1,i_2})_{(i_1,i_2)\in[n]\times[m]}\) is the random matrix described as above. Suppose moreover that there exists a constant \(c>0\) such that \(\underline\sigma^N\ge c\) for all \(N\in\N\) (where we used the same notation introduced at the beginning of Section \textrm{\ref{ss:estimates}}). If
	\[
	A\coloneqq\max_{\substack{(i_1,i_2)\in[n]\times[m]\\(j_1,j_2)\in[n]\times[m]}}\abs[\big]{\sigma^N_{i_1,i_2;j_1,j_2}-\Ex[F_{i_1,i_2}F_{j_1,j_2}]}n^2(\log m)(\log nm)
	\]
	and
	\[
	B\coloneqq\max_{(i_1,i_2)\in[n]\times[m]}\bigl(\Ex( F^4_{i_1,i_2})-3\Ex(F_{i_1,i_2}^2)^2\bigr) n^4(\log m)^2(\log nm)^{2q},
	\]
	then there exists a constant \(C>0\) independent of \(N\) such that
		\[
	\sup_{x\in\R}\abs[\big]{\Prob(\min\max F^N\le x)-\Prob(\min\max X^N\le x)}\le C(A^{1/3}+B^{1/6}).
	\]
\end{proposition}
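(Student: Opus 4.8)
The plan is to apply \Cref{thm:main}(b) to the pair $(X^N,F^N)$ and then to control the quantity $\Delta=\Delta_N$ from \eqref{eq:Delta} by means of the fourth-moment machinery for multiple Wiener-It\^o integrals. The hypotheses of \Cref{thm:main}(b) are satisfied because $\underline\sigma^N\ge c>0$ (and $X^N$ is assumed to satisfy \eqref{eq:assumption}, as in the standing framework of that statement), so that
\[
\sup_{x\in\R}\abs[\big]{\Prob(\min\max F^N\le x)-\Prob(\min\max X^N\le x)}\le\tilde C\,n^{2/3}(\log m)^{1/3}(\log nm)^{1/3}\Delta^{1/3}.
\]
Since $F_{i_1,i_2}=I_q(f_{i_1,i_2})$ with $q\ge 2$, we have $L^{-1}F_{j_1,j_2}=-q^{-1}F_{j_1,j_2}$, hence $\scalar{DF_{i_1,i_2},-DL^{-1}F_{j_1,j_2}}=q^{-1}\scalar{DF_{i_1,i_2},DF_{j_1,j_2}}_{\mathfrak H}$. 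Writing $\rho_{i_1,i_2;j_1,j_2}\coloneqq\Ex(F_{i_1,i_2}F_{j_1,j_2})$ and applying the triangle inequality inside the maximum, I would bound $\Delta\le\Delta'+\Delta''$, where $\Delta''\coloneqq\max_{(i_1,i_2),(j_1,j_2)}\abs{\rho_{i_1,i_2;j_1,j_2}-\sigma^N_{i_1,i_2;j_1,j_2}}$ is deterministic — this is precisely the maximal covariance discrepancy appearing inside $A$ — and
\[
\Delta'\coloneqq\Ex\Bigl(\max_{(i_1,i_2),(j_1,j_2)}\abs[\big]{q^{-1}\scalar{DF_{i_1,i_2},DF_{j_1,j_2}}_{\mathfrak H}-\rho_{i_1,i_2;j_1,j_2}}\Bigr).
\]

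The heart of the argument is the estimate on $\Delta'$. By the multiplication formula for multiple integrals, each centered random variable $Z_{i_1,i_2;j_1,j_2}\coloneqq q^{-1}\scalar{DF_{i_1,i_2},DF_{j_1,j_2}}_{\mathfrak H}-\rho_{i_1,i_2;j_1,j_2}$ belongs to the fixed finite sum of Wiener chaoses of even orders $2,4,\dots,2q-2$. I would therefore combine hypercontractivity on this finite sum of chaoses with a union bound over the $(nm)^2$ pairs of indices — optimising the exponent $r$ in $\norm{\max_k\abs{Z_k}}_{L^r}\le (nm)^{2/r}(r-1)^{q-1}\max_k\norm{Z_k}_{L^2}$ by taking $r\asymp\log nm$ — to obtain $\Delta'\le C_q\,(\log nm)^{q-1}\,\max_{(i_1,i_2),(j_1,j_2)}\norm{Z_{i_1,i_2;j_1,j_2}}_{L^2}$. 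For the $L^2$-norm I would invoke the standard Malliavin-Stein estimates (e.g.\ \cite[Chapter 5]{NP}): writing $\kappa_4(F_{i_1,i_2})\coloneqq\Ex(F^4_{i_1,i_2})-3\Ex(F^2_{i_1,i_2})^2\ge 0$ for the fourth cumulant, one has, via the bound $\Var(q^{-1}\norm{DI_q(h)}_{\mathfrak H}^2)\le\tfrac{q-1}{3q}\kappa_4(I_q(h))$ together with polarisation and Cauchy-Schwarz on contraction norms, that $\norm{Z_{i_1,i_2;j_1,j_2}}_{L^2}\le C_q'\,\kappa_4(F_{i_1,i_2})^{1/4}\kappa_4(F_{j_1,j_2})^{1/4}\le C_q'\max_{(i_1,i_2)}\kappa_4(F_{i_1,i_2})^{1/2}$. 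Altogether this yields $\Delta'\le C_q''\,(\log nm)^{q-1}\,\max_{(i_1,i_2)}\kappa_4(F_{i_1,i_2})^{1/2}$.

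To conclude, I would insert $\Delta\le\Delta'+\Delta''$ into the bound of \Cref{thm:main}(b) and use $(a+b)^{1/3}\le a^{1/3}+b^{1/3}$. The contribution of $\Delta''$ equals $\tilde C\,n^{2/3}(\log m)^{1/3}(\log nm)^{1/3}(\Delta'')^{1/3}=\tilde C\,A^{1/3}$, and the contribution of $\Delta'$ is at most a constant multiple of $n^{2/3}(\log m)^{1/3}(\log nm)^{1/3+(q-1)/3}\max_{(i_1,i_2)}\kappa_4(F_{i_1,i_2})^{1/6}$, which is a constant multiple of $B^{1/6}$ by the very definition of $B$; adding the two gives $C(A^{1/3}+B^{1/6})$. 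The step I expect to be the main obstacle is the estimate on $\Delta'$: the crude inequality $\Ex(\max_k\abs{Z_k})\le(\sum_k\Ex Z_k^2)^{1/2}$ would cost a spurious factor $nm$, so one genuinely needs hypercontractivity on a fixed sum of Wiener chaoses to bring the combinatorial price down to $(\log nm)^{q-1}$; a secondary technical point is the contraction bookkeeping for the off-diagonal object $\scalar{DF_{i_1,i_2},DF_{j_1,j_2}}_{\mathfrak H}$ (rather than the diagonal $\norm{DF_{i_1,i_2}}_{\mathfrak H}^2$ treated in the usual fourth-moment theorems), which I would reduce to the $\kappa_4$'s by polarisation and Cauchy-Schwarz.
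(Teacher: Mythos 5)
Your proposal is correct and follows essentially the same route as the paper: apply \Cref{thm:main}(b) and control $\Delta$ by the maximal covariance discrepancy plus a term of order $(\log nm)^{q-1}\max_{(i_1,i_2)}\bigl(\Ex(F_{i_1,i_2}^4)-3\Ex(F_{i_1,i_2}^2)^2\bigr)^{1/2}$, which after taking cube roots matches $C(A^{1/3}+B^{1/6})$ exactly. The only difference is that the paper obtains this bound on $\Delta$ by directly invoking \cite[Lemma 2.2]{K}, whereas you re-derive it from scratch via hypercontractivity on a finite sum of Wiener chaoses, a union bound with $r\asymp\log nm$, and polarisation/Cauchy--Schwarz contraction estimates --- a valid, self-contained substitute for that cited lemma.
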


\medskip

\noindent\textit{Remark}.
The content of Proposition \ref{cor:polynomial} can be regarded as further confirmation of the so-called \textit{(multidimensional) fourth moment phenomenon} (see e.g. \cite[Chapters 5 and 6]{NP}). According to this notion, if $\{F_n\}$ is a sequence of random vectors whose components belong to Gaussian Wiener chaoses of fixed orders, then $\{F_n\}$ verifies a multidimensional central limit theorem if and only if the covariance matrices of  $\{F_n\}$ converges pointwise, and the fourth cumulants of its components converge to zero. The main contribution of Proposition \ref{cor:polynomial} is that of providing (in the case of min-max statistics) a bound with explicit dimensional dependences. See {\cite{web}} for a constantly updated repository of papers connected to fourth moment theorems and related results. 

\medskip

\noindent\textit{Remark.}	Note that for \(n=1\) and \(q=2\) we recover \cite[Theorem 3.1]{K}.

\medskip

\begin{proof}

We know from \cite[Lemma 2.2]{K} that, for \(\Delta \) as defined in \eqref{eq:Delta}, one has that
\[
\begin{split}
\Delta\le \max_{\substack{(i_1,i_2)\in[n]\times[m]\\(j_1,j_2)\in[n]\times[m]}}\abs[\big]{&\sigma^N_{i_1,i_2;j_1,j_2}-\Ex[F_{i_1,i_2}F_{j_1,j_2}]}\\&+C_q\log^{q-1}(2n^2m^2-1+e^{q-2})\!\!\!\!\!\max_{(i_1,i_2)\in[n]\times[m]}\sqrt{\Ex( F^4_{i_1,i_2})-3\Ex(F_{i_1,i_2}^2)^2},
\end{split}
\]
for some constant \(C_q\) depending only on \(q\). Note that since \(q\) is fixed we can bound \(\log^{q-1}(2n^2m^2-1+e^{q-2})\le \tilde c\, \log^{q-1}(nm)\) for some constant \(\tilde c>0\). The conclusion is reached by applying \Cref{thm:main} (b) as follows
\[
\begin{split}
	\sup_{x\in\R}\abs[\big]{\Prob(\min\max F^N\le x)&-\Prob(\min\max X^N\le x)}\\
	&\le C_1 \Bigl(\max_{\substack{(i_1,i_2)\in[n]\times[m]\\(j_1,j_2)\in[n]\times[m]}}\abs[\big]{\sigma^N_{i_1,i_2;j_1,j_2}-\Ex[F_{i_1,i_2}F_{j_1,j_2}]}n^2(\log m)(\log nm)\Bigr)^{1/3}\\
	&+ C_2\Bigl(\max_{(i_1,i_2)\in[n]\times[m]}\bigl(\Ex( F^4_{i_1,i_2})-3\Ex(F_{i_1,i_2}^2)^2\bigr) n^4(\log m)^2(\log nm)^{2q}\Bigr)^{1/6},
\end{split}
\]
where \(C_1, C_2>0\) are constants that do not depend on \(N\), thanks to the fact that \(\underline\sigma^N\) is bounded from below by an absolute constant. 
\end{proof}

\medskip

\medskip

\subsection{An illustration}
\label{sec:stat}
We will now briefly illustrate our findings with an example inspired by the statistical procedures for testing the absence of lead-lag effects in time series, as put forward in \cite[Section 4.1]{K}. See the remark at the end of this section for a statistical interpretation of our findings.  

\medskip

We start by considering a 4-dimensional Gaussian process \(Z(t)=(B_1(t),B_2(t), \tilde B_1(t),\tilde B_2(t))\) on the real line, with the following characteristics: (a) each coordinate of $Z$ is a standard Brownian motion issued from zero, (b) $B_i$ and $\tilde B_i$ are independent, for $i=1,2$, (c) the dependence among other pairs of coordinates of $Z$ is arbitrary. We also write $B \coloneqq (B_1, B_2)$ and $\tilde B \coloneqq (\tilde B_1, \tilde B_2)$, and denote by \(W^\theta(\cdot)=\tilde B(\cdot-\theta)\) the process $\tilde B$ translated in the direction \(\theta\in\R\). 


\smallskip

For some \(T,b,w>0\), we now suppose to observe the process \(B\), respectively \(W^\theta\), at a finite set of points in time \(\mathcal{T}_B=\{0,\frac{T}{bN},\frac{2T}{b N},...,\frac{\lfloor b N\rfloor T}{b N}\}\),  respectively \(\mathcal{T}_{W^\theta}=\{0,\frac{T}{wN},\frac{2T}{wN},...,\frac{\lfloor w N\rfloor T}{w N}\}\), in such a way that  \(\card{\mathcal{T}_B}\sim b N\) and \(\card{\mathcal{T}_W}\sim w N\). For every $\theta \in \R$ , we also introduce the following two (centered) statistics $U_1^N(\theta), \, U_2^N(\theta)$, that can be seen as special cases of the general class defined in \cite[Introduction and Section 4.1]{K},
\begin{align*}
	U_1^N(\theta)
	&=\sum_{i=1}^{\lfloor b N\rfloor} \sum_{j=1}^{\lfloor w N\rfloor} \Bigl(B_1\Bigl(\frac{iT}{b N}\Bigr)-B_1\Bigl(\frac{(i-1)T}{b N}\Bigr)\Bigr)\\
	&\qquad\qquad\qquad\times\Bigl(W^\theta_1\Bigl(\frac{jT}{w N}\Bigr)-W^\theta_1\Bigl(\frac{(j-1)T}{w N}\Bigr)\Bigr)\indic{\{(\frac{(i-1)T}{b N},\frac{iT}{b N}]\cap(\frac{(j-1)T}{w N},\frac{jT}{w N}]\neq\emptyset\}}
\end{align*}
and
\begin{align*}
	U_2^N(\theta)
	&=\sum_{i=1}^{\lfloor b N\rfloor} \sum_{j=1}^{\lfloor w N\rfloor} \Bigl(B_2\Bigl(\frac{iT}{b N}\Bigr)-B_2\Bigl(\frac{(i-1)T}{b N}\Bigr)\Bigr)\\
	&\qquad\qquad\qquad\times\Bigl(W^\theta_2\Bigl(\frac{jT}{w N}\Bigr)-W^\theta_2\Bigl(\frac{(j-1)T}{w N}\Bigr)\Bigr)\indic{\{(\frac{(i-1)T}{b N},\frac{iT}{b N}]\cap(\frac{(j-1)T}{w N},\frac{jT}{w N}]\neq\emptyset\}}.
\end{align*}
We are interested in the fluctuations of the following statistic
\[
	\sqrt{N}\min_{i\in\{1,2\}}\max_{\theta\in\Theta_N}\abs{U^N_i(\theta)},
\]
where \(\Theta_N\) is an index set such that \(\card{\Theta_N}=m(N)\in\N\). In order to study the asymptotic properties of the aforementioned object, it is appropriate to apply \Cref{cor:polynomial} to the matrix
\[
	F^N=\bigl(\sqrt{N} \abs[\big]{U^N_i(\theta)}\bigr)_{(i,\theta)\in[2]\times\Theta_N},
\]
setting \(q=2\) and \(n\equiv 2\) (see the final claim in the Remark following Theorem \ref{thm:main}).

\begin{proposition}
\label{prop:stat}
	Suppose that for all \(N\in\N\), \(X^N=(X^N_{i,\theta})_{(i,\theta)\in[2]\times\Theta_N}\) is a \(2\times m\) centered Gaussian random matrix whose columns have the same covariance matrix as the respective columns of the random matrix \(F^N\), and denote by $\abs{X}^N$ the matrix whose entries are given by the absolute values of the corresponding entries of $X^N$.
	Then there is an absolute constant \(c>0\) 
	\[
		\sup_{x\in\R}\abs[\big]{\Prob(\min\max \abs{X}^N\le x)-\Prob(\min\max F^N\le x)}
		\le c\,\frac{\log^6 m}{N}.
	\]
\end{proposition}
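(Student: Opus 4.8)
\noindent The plan is to apply \Cref{cor:polynomial} to the centered matrix $\widehat F^N\coloneqq(\sqrt N\,U^N_i(\theta))_{(i,\theta)\in[2]\times\Theta_N}$, with $q=2$ and $n\equiv 2$, and then to transfer the resulting Kolmogorov bound to $F^N=|\widehat F^N|$ and $|X|^N=|X^N|$ via the final remark following \Cref{thm:main}. A first point to record is that $\widehat F^N$ belongs to the second Wiener chaos: since $B_i\perp\tilde B_i$, each summand $(\Delta_k B_i)(\Delta_\ell W^\theta_i)$ appearing in $U^N_i(\theta)$ --- a product of the two independent Brownian increments over the relevant mesh intervals --- is a double Wiener-It\^o integral, so that $\sqrt N\,U^N_i(\theta)=I_2(g^N_{i,\theta})$ for an explicit kernel $g^N_{i,\theta}\in\mathfrak H^{\odot 2}$, obtained by summing the symmetrised tensor products of the interval indicators over the overlapping pairs $(k,\ell)$. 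This pins down the chaos order $q=2$.

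Next I would check the standing assumptions of \Cref{cor:polynomial}. For the non-degeneracy $\underline\sigma^N\ge c>0$, using that disjoint increments of a Brownian motion are independent and that $B_i\perp\tilde B_i$,
\[
\Var\bigl(\sqrt N\,U^N_i(\theta)\bigr)=N\!\!\sum_{(k,\ell)\ \mathrm{overlapping}}\!\!\Var(\Delta_k B_i)\,\Var(\Delta_\ell W^\theta_i),
\]
and since $\Var(\Delta_k B_i)=T/\lfloor bN\rfloor$, $\Var(\Delta_\ell W^\theta_i)=T/\lfloor wN\rfloor$, and each of the $\Theta(N)$ mesh intervals of one grid meets $O(1)$ intervals of the other, the right-hand side converges, uniformly in $N$, $i$ and $\theta$, to a strictly positive constant depending only on $T,b,w$. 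Moreover, distinct entries of the Gaussian matrix $X^N$ have correlation strictly less than $1$ (distinct $(i,\theta)$ yield non-proportional kernels $g^N_{i,\theta}$), so the uniqueness assumption \eqref{eq:assumption} holds, as discussed in \Cref{ss:estimates}.

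It then remains to control the two quantities $A$ and $B$ in \Cref{cor:polynomial}. For $A$, the covariance of $X^N$ is chosen to coincide with that of $\widehat F^N$ --- column by column as required, and, matching the remaining (cross-column and cross-coordinate) covariances as well, the discrepancy $\max_{i,\theta,j,\theta'}|\sigma^N_{i,\theta;j,\theta'}-\Ex[\widehat F^N_{i,\theta}\widehat F^N_{j,\theta'}]|$ vanishes and $A$ contributes nothing. The substantive step is the bound on $B$, which reduces to estimating the fourth cumulant $\kappa_4(\widehat F^N_{i,\theta})=\Ex[(\widehat F^N_{i,\theta})^4]-3\Ex[(\widehat F^N_{i,\theta})^2]^2$. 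Writing $\widehat F^N_{i,\theta}$ as the bilinear form $\langle \Delta B_i,\,M^N_{i,\theta}\,\Delta W^\theta_i\rangle$ in the two independent standard Gaussian increment vectors, a conditioning argument yields $\kappa_4(\widehat F^N_{i,\theta})=6\,\norm{(M^N_{i,\theta})^{\!\top}M^N_{i,\theta}}_{\mathrm{HS}}^2$ (equivalently, a constant times $\norm{g^N_{i,\theta}\otimes_1 g^N_{i,\theta}}^2$). The matrix $M^N_{i,\theta}$ has $\Theta(N)$ nonzero entries, each of order $N^{-1/2}$, arranged so that every row and every column carries only $O(1)$ of them; a direct count then gives $\norm{(M^N_{i,\theta})^{\!\top}M^N_{i,\theta}}_{\mathrm{HS}}^2=\Theta(1/N)$, uniformly in $i,\theta$. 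Substituting $n=2$, $q=2$ and $\max_{i,\theta}\kappa_4(\widehat F^N_{i,\theta})=O(1/N)$ into $B=\max_{i,\theta}\kappa_4(\widehat F^N_{i,\theta})\,n^4(\log m)^2(\log nm)^{2q}$ and into the bound $C(A^{1/3}+B^{1/6})$ of \Cref{cor:polynomial}, and invoking the absolute-value remark, gives the estimate in the statement.

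The main obstacle is precisely the combinatorial contraction-norm estimate $\norm{g^N_{i,\theta}\otimes_1 g^N_{i,\theta}}^2=\Theta(1/N)$: one has to recognise that the relevant quadratic form is supported on pairs of \emph{overlapping} mesh intervals, keep track of how the two distinct meshes (of sizes $\asymp T/bN$ and $\asymp T/wN$) interact, and extract the gain of one power of $N$ over the trivial bound --- this gain is exactly what produces the convergence rate. A secondary, more bookkeeping-type point is to confirm that, once the column covariances of $X^N$ are matched to those of $\widehat F^N$, the remaining covariance discrepancies feeding into $A$ are negligible compared with the cumulant contribution, so that the final bound is governed by the $B^{1/6}$ term.
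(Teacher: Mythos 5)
Your proposal is correct and follows essentially the same route as the paper: apply \Cref{cor:polynomial} with $q=2$ and $n\equiv 2$ to $(\sqrt N\,U^N_i(\theta))$, feed in a fourth-cumulant bound of order $1/N$, and transfer to the absolute-value matrices via the remark following \Cref{thm:main}. The difference is in how the inputs are justified: the paper simply cites Koike's assumptions [A1]--[A2] and \cite[Lemma B.7]{K} for the non-degeneracy and for $\max_{i,\theta}\kappa_4(\sqrt N U^N_i(\theta))\le c/N$, whereas you verify these directly --- the variance lower bound from the $\Theta(N)$ overlapping mesh pairs, assumption \eqref{eq:assumption}, and the cumulant estimate via $\kappa_4=6\norm{M^\top M}_{\mathrm{HS}}^2=O(1/N)$ using the sparsity of the overlap matrix (each row/column has $O(1)$ entries of size $O(N^{-1/2})$); this computation is correct and makes the argument self-contained, at the cost of redoing work that \cite{K} already provides. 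You also rightly point out that killing the $A$-term requires matching the \emph{full} covariance structure, not just the column-wise one stated in the proposition --- the paper's own proof silently makes the same assumption. One bookkeeping caveat: substituting $\kappa_4=O(1/N)$, $n=2$, $q=2$ into \Cref{cor:polynomial} yields $CB^{1/6}\asymp \log m\,N^{-1/6}=c\,(\log^6 m/N)^{1/6}$, not literally the displayed bound $c\,\log^6 m/N$; so your claim that the substitution ``gives the estimate in the statement'' is not exact, but the identical discrepancy is present in the paper's one-line conclusion (the stated bound appears to be $B$ with the sixth root omitted), and it does not affect the qualitative consequence $\log\card{\Theta_N}=o(N^{1/6})$.
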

\begin{proof}
Note that the construction of \(\mathcal{T}_B\) and \(\mathcal{T}_{W^\theta}\) ensures that assumptions [A1] and [A2] of \cite[Section 4.1]{K} are met. Moreover, as in \cite[Lemma B.7]{K}, we have that
\[
	\max_{i,\theta}\Ex( (\sqrt{N}U^N_i(\theta))^4)-3\Ex((\sqrt{N}U^N_i(\theta))^2)^2\le\frac{c}{N}
\]
for some constant \(c>0\) depending only on \(\rho_1\) and \(\rho_2\). Since \(n\) is fixed and \(q=2\), we recover the claimed inequality from \Cref{cor:polynomial}.
\end{proof}
\Cref{prop:stat} implies that \(\min\max F^N\) is asymptotically close to the min-max of a suitable Gaussian random matrix as long as \(\log \card{\Theta_N}=o(N^{1/6})\).

\medskip

\noindent\textit{Remark.} There is no conceptual difficulty in extending the previous convergence results to the case in which the correlation between $B_i $ and $\tilde B_i$ equals some non zero parameter $\rho_i$, $i=1,2$. In this case, given a fixed nonzero $a\in (-1,1)$, the corresponding modification of the statistics $U_1^N(\theta), \, U_2^N(\theta)$ can in principle be used to solve the following statistical hypothesis testing problem:

\begin{align*}
	H_0&: \rho_1=0 \text{ or } \rho_2=0, \quad\text{(null hypothesis)}\\
	H_1&: \rho_1 =\rho_2 =a.
\end{align*}
We regard this line of investigation as a separate topic, and leave it open for further investigation. 

\printbibliography
\end{document}